\theoremstyle{definition}
\newtheorem{definition}{Definition}
\newtheorem{remark}[definition]{Remark}
\theoremstyle{plain}
\newtheorem{lemma}[definition]{Lemma}
\newtheorem{proposition}[definition]{Proposition}
\newtheorem{theorem}[definition]{Theorem}
\newtheorem{corollary}[definition]{Corollary}
\newcommand\A{{\mathbf A}}
\newcommand\Fm{\mathbf{Fm}}
\newcommand\WK{{\mathbf{WK}}}
\newcommand\WKt{\WK^{e}}
\providecommand*{\Dashv}{%
	\mathrel{%
		\mathpalette\@Dashv\vDash
	}%
}
\newcommand*{\@Dashv}[2]{%
	\reflectbox{$\m@th#1#2$}%
}
\newcommand\PWK{\mathsf{PWK}}
\newcommand\PWKe{\mathsf{PWK}_{\mathrm{e}}}
\newcommand\MPWK{\ensuremath{\mathsf{PWK}^{\Box}_{\mathrm{e}}}\xspace}
\newcommand\Ho{\mathrm{H_{0}}}
\newcommand\B{\mathsf{B}_{\mathrm{e}}}
\newcommand{\class}{\mathsf}
\newcommand{\Jzero}{J_{_0}}
\newcommand{\Juno}{J_{_1}}
\newcommand{\Jdue}{J_{_2}}
\newcommand{\Ji}{J_{_k}}
\newcommand\ant{\nicefrac12}
\newcommand\ti{\mathrm{t}}
\newcommand\pair[1]{{\langle#1\rangle}}
\newcommand{\sineq}{\mathrel{\dashv\mkern1.5mu\vdash}}  
\title[]{Modal weak Kleene logics: axiomatizations and relational semantics.}
\author{S. Bonzio}\address{Department of Mathematics and Computer Science, \\
	University of Cagliari, Italy.}
\author{N. Zamperlin}
\address{Department of Pedagogy, Psychology and Philosophy, \\
	University of Cagliari, Italy.}
\keywords{Modal logic, Kleene logic, Bochvar external logic, PWK external logic.}
\subjclass[2020]{Primary: 03B45. Secondary: 03B60.}
\begin{document}
	
	\maketitle

 \begin{abstract}
Weak Kleene logics are three-valued logics characterized by the presence of an infectious truth-value. In their external versions, as they were originally introduced by Bochvar \cite{Bochvar38} and Halld\'{e}n \cite{Hallden}, these systems are equipped with an additional connective capable of expressing whether a formula is classically true. In this paper we further expand the expressive power of external weak Kleen logics by modalizing them with a unary operator. The addition of an alethic modality gives rise to the two systems $\B^{\square}$ and $\MPWK$, which have two different readings of the modal operator. We provide these logics with a complete and decidable Hilbert-style axiomatization w.r.t. a three-valued possible worlds semantics. The starting point of these calculi are new axiomatizations for the non-modal bases $\B$ and $\PWKe$, which we provide using the recent algebraization results about these two logics. In particular, we prove the algebraizability of $\PWKe$. Finally some standard extensions of the basic modal systems are provided with their completeness results w.r.t. special classes of frames.   
 \end{abstract}
 
\section{Introduction}\label{sec: Intro}

Modal logics are formalisms originally devised to speak about necessity and possibility of formulas belonging to classical propositional, or first-order, logic. They have been successfully extended to non-classical logics, namely modalities can be applied to formulas obeying rules of non-classical propositional (or first-order) logics. The process has involved a large variety of non-classical logics, including (but not limited to) intuitionistic logic \cite{FischerServi},\cite{FisherServi1981},\cite{FontModalityAP}, strong Kleene logic \cite{Fitting},\cite{Fitting2}, Belnap-Dunn \cite{RivieccioModal}, various fuzzy logics \cite{ModalFuzzy}, \cite{LukasiewiczModal}, \cite{LukasiewiczModal2}, \cite{GoedelModal}, \cite{ModalProduct}, and, more in general, the realm of substructural logics \cite{BouModal}. 

This tendency has only marginally involved weak Kleene logics. These (three-valued) logics, originally introduced by Bochvar \cite{Bochvar38}, and subsequently investigated by Kleene \cite{KleeneBook}, to deal with mathematical paradoxes and formulas possibly referring to non-existing objects and/or incorrectly written computer programs, have attracted much attentions in the recent past from several points of view: semantical \cite{CiuniCarrara}, algebraic \cite{Bonzio16SL}, \cite{Bonziobook}, epistemic \cite{SzmucEpistemic}, \cite{BoemBonzio}, computer-theoretic \cite{carrara_computation21}, \cite{ciuni_computererror19}, topic-theoretic \cite{beall2016off}, 
and belief revision \cite{carrara_pwk_argumentation}. To the best of our knowledge, the only existing proposals of modal logics based on (some) weak Kleene logics are due to the works of Correia \cite{Correia} and Segerberg \cite{Segerberg67}. Both papers focus on the modalized extension of Paraconsistent weak Kleene.\footnote{Actually, Correia \cite{Correia} introduces also the idea of a modalized version of Bochvar logic, but gives no axiomatization nor semantical analysis for that.} While Correia's work introduces an axiomatization and a relational semantics for the modal version of PWK, Segerberg's \cite{Segerberg67} is based on the external version of Paraconsistent Weak Kleene and takes in consideration the existing difference between truth and non-falsity, within the three-valued realm, only with reference to establishing the semantical interpretation of modal formulas in a three-valued relational settings, while forgetting that such distinction is already made clear in the choice of the designated truth-values, leading either to Bochvar logic (whose consequence relation preserves truth only) or to Paraconsistent weak Kleene (or, Halld\'{e}n logic, preserving non-falsity). Segerberg \cite{Segerberg67} essentially defines two different necessity operators on the external version of Paraconsistent weak Kleene. In the present work, inspired by Segerberg's intuitions but guided by the above explained distinction, we will introduce modal logics, with a unique necessity operator, on both (the external versions of) Bochvar and Paraconsistent weak Kleene. Remarkably, the work of Segerberg relies on the \emph{external} version of (a) weak Kleene logic. External weak Kleene logics are defined in an extended language from that of (propositional) classical logic, usually adopted also for weak Kleene, which we may now refer to as ``internal Kleene logics''. The founding fathers of these formalisms -- Bochvar and Halld\'{e}n \cite{Bochvar38}, \cite{Hallden} -- actually defined their logics in the external language, which allows for a significant enrichment in expressiveness and the chance of recovering (propositional) classical logic, for a fragment of language, still working within a three-valued semantics. External Kleene logics turn out to be mathematically more interesting (than weak Kleene logics, as usually intended) as they are algebraizable in the sense of Blok and Pigozzi (see e.g. \cite{FontBook}, Definition 3.11). This is part of our motivation to study modal Kleene logics in the external language: although the present work does not take in consideration the global version of modal logics over a certain relational structure, but only their local versions, we still expect to provide a modal basis on which algebraizability can be carried over from the propositional level (if one considers the global consequence relations instead of the local ones). Moreover, as weak Kleene logics turned out to be a particular case of a more general phenomenon, that of the ``logics of variable inclusion'' \cite{Bonziobook} -- ``internal'' modal Kleene logics are indeed examples of logics of variable inclusion. As described in details in \cite{Bonziobook}, these logics could be approached, syntactically, by imposing certain constraints on the inclusion of variables to standard modal logics and, semantically, by considering the construction of the P\l onka sum of modal algebras or of its subvarieties ``corresponding'' to the extension of the modal logic $K$.

The main contribution of the present work is to introduce two different modal logics whose propositional bases are Bochvar external logic and the external version of Paraconsistent weak Kleene logic, respectively. For both of them, we provide Hilbert-style axiomatizations which are sound and complete with respect to a relational semantics, consisting of Kripke frames where the necessity operator is interpreted according to the choices of truth and non-falsity preservation imposed by the propositional basis of each logic. Finally, our work tries to shade a light on some extensions of weak Kleene modal logics, in particular, to those whose semantics is based on reflexive, transitive, and euclidean frames. The aim of this choice is that of providing useful tools for the analysis of epistemic concepts, such as knowledge, beliefs or ignorance, via non-classical logics (a tendency that has already be started e.g. in \cite{Ignoranzasevera},\cite{KnowledgeinBelnap-Dunn}).

The paper is organized into six Sections (including this Introduction) plus and Appendix. In Section \ref{sec: external logics}, we recall several important notions related to external Kleene logics that are needed to go through the paper. We also establish the algebraizability of PWK external logic, and use the algebraizability of both logics to provide new Hilbert-style axiomatizations, which we will argue have some advantages over the existing ones. In Section \ref{sec: Modal}, we introduce, via a Hilbert-style axiomatization, a modal logic based on Bochvar external logic, for which we prove completeness and decidability with respect to the relational semantics. In section \ref{sec: modal PWK}, we proceed analogously for PWK external logic. We dedicate Section \ref{sec: estensioni} to introduce some axiomatic extensions of both modal Bochvar logic and modal PWK and show their completeness with respect to reflexive, transitive and euclidean relational models. Finally, in the Appendix (section \ref{sec: Appendice}), we prove the algebraizability of PWK external logic (with respect to the quasi-variety of Bochvar algebras as equivalent algebraic semantics).

\section{External weak Kleene logics}\label{sec: external logics}

Kleene three-valued logics are traditionally divided into two families, depending on the meaning given to the connectives: \emph{strong Kleene} logics -- counting strong Kleene \cite{KleeneBook} and the logic of paradox \cite{Priestfirst} -- and \emph{weak Kleene} logics, namely Bochvar logic and paraconsistent weak Kleene \cite{Bonzio16SL} (sometimes referred to as Halld\'{e}n's logic \cite{Hallden}). All the mentioned four logics are traditionally intended, and thus defined, over the (algebraic) language of classical logic. However, the intent of the first developer of the \emph{weak Kleene} formalism, D. Bochvar, was to work within an enriched language allowing to express all classical ``two-valued'' formulas -- which he referred to as \emph{external formulas} -- beside the genuinely ``three-valued'' ones. The result of this choice is the following language.

\begin{definition}
    Let us fix the language $\mathcal{L}\colon\langle \neg,\lor,\Jdue,0,1\rangle$ of type $(1,2,1,0,0)$, which is obtained by enriching the classical language by an additional unary connective $\Jdue$ (and the constants $0,1$), where the formula $\Jdue\varphi$ is to be read as ``$\varphi$ is true''. The language $\mathcal{L}$ will be referred to as \emph{external language}, while its $\Jdue$-reduct will be called \emph{internal}. Let us refer to $\Fm$ as the formula algebra over $\mathcal{L}$, and to $Fm$ as its universe. More explicitly, formulas are inductively defined as follows:
    \begin{center}
        $p\in Var$ $|$ $0$ $|$ $1$ $|$ $\neg\varphi$ $|$ $\varphi\lor\psi$ $|$ $\Jdue\varphi.$
    \end{center}
    We will employ the following abbreviations: $\varphi\land\psi\coloneqq\neg(\neg\varphi\lor\neg\psi) $, $\varphi\to\psi\coloneqq\neg\varphi\lor\psi $, $\varphi\leftrightarrow\psi\coloneqq(\varphi\to\psi)\land(\psi\to\varphi) $, $\Jzero\varphi\coloneqq \Jdue\neg\varphi $, $\Juno\varphi\coloneqq \neg(\Jdue\varphi\lor\Jdue\neg\varphi)$, $+\varphi\coloneqq\neg\Juno\varphi$, and $\varphi\equiv\psi\coloneqq\displaystyle(\Jdue\phi\leftrightarrow\Jdue\psi)\land(\Jzero\phi\leftrightarrow\Jzero\psi) $.
\end{definition}
	
The (algebraic) interpretation of the language $\mathcal{L}$ is given via the three-element algebra $\WKt=\pair{\{0,1,\ant\}, \neg,\lor,\Jdue,0,1}$ displayed in Figure \ref{fig:WKe}. 
	\begin{figure}[h]
		\begin{center}\renewcommand{\arraystretch}{1.25}
			\begin{tabular}{>{$}c<{$}|>{$}c<{$}}
				& \lnot  \\[.2ex]
				\hline
				1 & 0 \\
				\ant & \ant \\
				0 & 1 \\
			\end{tabular}
			\qquad
			\begin{tabular}{>{$}c<{$}|>{$}c<{$}>{$}c<{$}>{$}c<{$}}
				\lor & 0 & \ant & 1 \\[.2ex]
				\hline
				0 & 0 & \ant & 1 \\
				\ant & \ant & \ant & \ant \\          
				1 & 1 & \ant & 1
			\end{tabular}
			\qquad
			\begin{tabular}{>{$}c<{$}|>{$}c<{$}}
				\varphi & J_{_2} \varphi \\[.2ex]
				\hline
				1 & 1 \\
				\ant & 0 \\
				0 & 0 \\
			\end{tabular}
		\end{center}
		\caption{The algebra $\WKt$.}\label{fig:WKe}
	\end{figure}

The third value $\ant$ is traditionally read as ``meaningless'' (see e.g. \cite{ferguson2017meaning} and \cite{SmuczMeaningless}) due to its infectious behavior. The defined connectives of conjunction and material implication have the expected tables:

        \begin{figure}[h]
		\begin{center}\renewcommand{\arraystretch}{1.25}
                \begin{tabular}{>{$}c<{$}|>{$}c<{$}>{$}c<{$}>{$}c<{$}}
				\land & 0 & \ant & 1 \\[.2ex]
				\hline
				0 & 0 & \ant & 0 \\
				\ant & \ant & \ant & \ant \\          
				1 & 0 & \ant & 1
			\end{tabular}
			\qquad
                \begin{tabular}{>{$}c<{$}|>{$}c<{$}>{$}c<{$}>{$}c<{$}}
				\to & 0 & \ant & 1 \\[.2ex]
				\hline
				0 & 1 & \ant & 1 \\
				\ant & \ant & \ant & \ant \\          
				1 & 0 & \ant & 1
			\end{tabular}
            \end{center}
        \end{figure} 

Via $\Jdue$ one can define other external connectives: $\Jzero$ expressing the falseness of a formula, and $\Juno$ expressing its non-classicality. Moreover, $\equiv$ is the proper logical equivalence for the external language. Their interpretation in $\WKt$ is displayed in the following tables.\footnote{We are adopting here the notation due to Finn and Grigolia \cite{FinnGrigolia}; Bochvar \cite{Bochvar38} and Segerberg \cite{Segerberg65} instead used $\ti$, $\mathsf{f}$, $-$ for $\Jdue$, $\Jzero$, $\Juno$, respectively.} 
\vspace{5pt}
\begin{center}
\begin{tabular}{>{$}c<{$}|>{$}c<{$}}
				\varphi & \Jzero \varphi \\[.2ex]
				\hline
				1 & 0 \\
				\ant & 0 \\
				0 & 1 \\
			\end{tabular}
			\qquad
			\begin{tabular}{>{$}c<{$}|>{$}c<{$}}
				\varphi & \Juno\varphi  \\[.2ex]
				\hline
				1 & 0 \\
				\ant & 1 \\
				0 & 0 \\
			\end{tabular}
                \qquad
                \begin{tabular}{>{$}c<{$}|>{$}c<{$}>{$}c<{$}>{$}c<{$}}
				\equiv & 0 & \ant & 1 \\[.2ex]
				\hline
				0 & 1 & 0 & 0 \\
				\ant & 0 & 1 & 0 \\          
				1 & 0 & 0 & 1
			\end{tabular}
	\end{center}
\vspace{5pt}

In the interpretation of the language $\mathcal{L}$ some formulas are evaluated into $\{0,1\}$ \emph{only} (which is the universe of a Boolean subalgebra of $\WKt$), by any homomorphism $h\colon\Fm\to\WKt$: these are called \emph{external} formulas (see Definition \ref{def: formula esterna}, for a syntactic definition).  

\subsection{Bochvar external logic}

We recall that a logic $\mathsf{L}$ is induced by a matrix $\pair{\mathbf{M},F}$ (of the same language) when
$$\Gamma\vdash_\mathsf{L}\varphi \text{ iff for all homomorphisms } h\colon\Fm\to\mathbf{M},$$ $$h[\Gamma]\subseteq F \text{ implies } h(\varphi)\in F.$$

\begin{definition}\label{def.: logiche esterne}
	Bochvar \emph{external} logic $\B$ is the logic induced by the matrix $\pair{\WKt,\{1\}}$. PWK \emph{external} logic $\PWKe$ is the logic induced by the matrix $\pair{\WKt,\{1,\ant\}}$.
\end{definition}
Therefore, $\B$ is the logic preserving only the value $1$ (for truth), while $\PWKe$ preserves both $1$ and $\ant$ (thus, non-falsity). The latter, originally introduced by Halld\'{e}n \cite{Hallden}, has been later on studied by Segerberg \cite{Segerberg65}, who named it $\Ho$. Both $\B$ and $\PWKe$ are finitary logics, as they are defined by a finite set of finite matrices. 

Hilbert-style axiomatizations of the two logics are given by Finn-Grigolia \cite{FinnGrigolia} and Segerberg \cite{Segerberg65}, respectively. In order to introduce them, some technicalities are needed.

\begin{definition}\label{def: variabili aperte/coperte}
	An occurrence of a variable $x$  in a formula $\varphi$ is \emph{open} if it does not fall under the scope of $\Ji$, for every $k\in\{0,1,2\}$. A variable $x$ in $\varphi$ is \emph{covered} if all of its occurrences are not open, namely if for every occurrence of $x$ in $\varphi$ falls under the scope of $\Ji$, for some $k\in\{0,1,2\}$.
\end{definition}

The intuition behind the notion of external formula is made precise by the following.

\begin{definition}\label{def: formula esterna}
	A formula $\varphi\in Fm$ is called \emph{external} if all its variables are covered. 
\end{definition}


The following axioms (and rule) define Finn and Grigolia's Hilbert-style axiomatization\footnote{To be precise, in \cite{FinnGrigolia} the following definition of the connective is given: $\varphi\equiv\psi\coloneqq\displaystyle\bigwedge_{i=0}^{2}J_{i}\varphi\leftrightarrow J_{_i}\psi $. Nonetheless, since the operator $\Juno$ is entirely determined by $\Jdue$ and $\Jzero$, Finn and Grigolia's definition can be safely substituted with ours, obtaining an equivalent calculus (see e.g. \cite{SMikBochvar}).} of $\B$ \cite{FinnGrigolia}.

\vspace{5pt}
\noindent
\textbf{Axioms}
\begin{itemize}
	\item[(A1)] $(\varphi\lor\varphi)\equiv\varphi$;
	\item[(A2)] $(\varphi\lor\psi)\equiv(\psi\lor\varphi)$;
	\item[(A3)] $((\varphi\lor\psi)\lor \chi)\equiv(\varphi\lor(\psi\lor\chi))$;
	\item[(A4)] $(\varphi\land(\psi\lor \chi)\equiv((\varphi\land\psi)\lor(\varphi\land\chi))$;
	\item[(A5)] $\neg(\neg \varphi)\equiv\varphi$;
	\item[(A6)] $\neg 1\equiv 0$;
	\item[(A7)] $\neg( \varphi\lor\psi)\equiv(\neg\varphi\land\neg\psi)$;
	\item[(A8)] $0\vee \varphi\equiv\varphi$;
	\item[(A9)] $J_{_2}\alpha\equiv\alpha$;
	\item[(A10)] $J_{_0}\alpha\equiv\neg\alpha$;
	\item[(A11)] $J_{_1}\alpha\equiv 0$;
	\item[(A12)] $J_{_i}\neg\varphi\equiv J_{_2-i}\varphi$, for any $i\in\{0,1,2\}$;
	\item[(A13)] $ J_{_i}\varphi \equiv\neg(J_{_j}\varphi\vee J_{_k}\varphi)$, with $i\neq j\neq k\neq i$;
	
	\item[(A14)] $(J_{_i}\varphi\vee\neg J_{_i}\varphi)\equiv 1$, with $i\in\{0,1,2\}$;
	
	\item[(A15)] $((J_{_i}\varphi\vee J_{_k}\psi)\land J_{_i}\varphi)\equiv J_{_i}\varphi$, with $i,k\in\{0,1,2\}$;
	\item[(A16)] $(\varphi\lor J_{_i}\varphi)\equiv \varphi$, with $i\in\{1,2\}$;
	\item[(A17)] $J_{_0}(\varphi\lor\psi)\equiv J_{_0}\varphi\land J_{_0}\psi$;
	\item[(A18)] $J_{_2}(\varphi\lor\psi)\equiv ( J_{_2}\varphi\land J_{_2}\psi)\vee( J_{_2}\varphi\land J_{_2}\neg\psi)\vee ( J_{_2}\neg \varphi\land J_{_2}\psi) $.
 \end{itemize}

 Let $\alpha,\beta,\gamma$ denote external formulas only:

 \begin{itemize}
        \item[(A19)] $\alpha\to(\beta\to\alpha)$; 
        \item[(A20)] $(\alpha\to(\beta\to\gamma))\to((\alpha\to\beta)\to(\alpha\to\gamma))$;
        \item[(A21)] $(\neg\alpha\to\neg\beta)\to(\beta\to\alpha)$.
\end{itemize}


	
	
	
\vspace{10pt}
\noindent
\textbf{Deductive rule}
\[
\begin{prooftree}
	\Hypo{\varphi}  \Hypo{\varphi\to\psi}
	\Infer[left label = {[MP]}]2 \psi
\end{prooftree}
\]

\vspace{10pt}

Notice that there is nothing special about the choice of axioms (A19)-(A21): it is only important that, together with \emph{modus ponens}, yield a complete axiomatization for classical logic, but only relative to external formulas.\footnote{Actually the original presentation in \cite{FinnGrigolia} contains a much longer axiomatization.}

The fact that $\B$ coincides with the logic induced by the above introduced Hilbert-style axiomatization has been proved in \cite{Ignoranzasevera} (Finn and Grigolia \cite[Theorem 3.4]{FinnGrigolia} only proved a weak completeness theorem for $\B$). We will henceforth indicate by $\vdash_{\B}$ both the consequence relation induced by the matrix $\pair{\WKt,\{1\}}$ and the one induced by the above Hilbert-style axiomatization.

The logic $\B$ is algebraizable with the quasi-variety of Bochvar algebras as its equivalent algebraic semantics.\footnote{This quasi-variety has been introduced by Finn and Grigolia \cite{FinnGrigolia}, while its structural properties are studied in \cite{SMikBochvar}.} This means that there exists maps $\tau\colon Fm\to \mathcal{P}(Eq)$, $\rho\colon Eq\to \mathcal{P}(Fm)$ from formulas to sets of equations and from equations to sets of formulas such that
\[\gamma_{1},\dots,\gamma_{n}\vdash_{\mathsf{B}_{e}}\varphi\iff\tau(\gamma_{1}),\dots,\tau(\gamma_{n})\vDash_{\mathcal{BCA}}\tau(\varphi)\]
and
\[\varphi\thickapprox\psi\Dashv\vDash_{\mathcal{BCA}}\tau\rho(\varphi\thickapprox\psi).\]

The algebraizability of $\B$ is witnessed by the transformers $\tau(\varphi)\coloneqq \{\varphi\thickapprox 1\}$ and  $\rho(\varphi\thickapprox\psi)\coloneqq\{\varphi\equiv\psi\}$ (see \cite{Ignoranzasevera} for details) and allows to provide a ``standard'' Hilbert-style axiomatization, whose axioms and rules make no difference bewteen external and non-external formulas. Recall that for a class $\mathcal{C}$ of algebras (of a certain type), the equational consequence relation $\Theta\vDash_{\mathcal{C}}\phi\approx\psi$ holds iff for all $\A\in\mathcal{C}$ and all homomorphisms (from the formulas in the same type) $h:\Fm\to\A$, if $h(\delta)=h(\epsilon)$ for all $\delta\approx\epsilon\in\Theta$, then $h(\phi)=h(\psi)$.

Using the equational description of the quasi-variety of Bochvar algebras presented in \cite{SMikBochvar} (see in particular \cite[Theorem 7]{SMikBochvar}), we can apply the algorithm described in \cite[Proposition 3.47]{FontBook}, obtaining the following Hilbert-style calculus.

\begin{definition}\label{def: assiomatizzazione B}
A Hilbert-style axiomatization of $\B$ is given by the following Axioms and Rules. \\
\noindent
\textbf{Axioms}
\begin{itemize}
	\item[($\rho$-B1)] $\varphi\lor\varphi\equiv\varphi$;
	\item[($\rho$-B2)] $\varphi\lor\psi\equiv\psi\lor\varphi$;
	\item[($\rho$-B3)] $(\varphi\lor\psi)\lor \chi\equiv\varphi\lor(\psi\lor\chi)$;
	\item[($\rho$-B4)] $\varphi\lor 0\equiv\varphi$;
        \item[($\rho$-B5)] $\neg\neg\varphi\equiv\varphi$;
        \item[($\rho$-B6)] $\neg(\varphi\lor\psi)\equiv\neg\varphi\land\neg\psi$;
        \item[($\rho$-B7)] $\neg 1\equiv 0$;
        \item[($\rho$-B8)] $\varphi\land(\psi\lor \chi)\equiv(\varphi\land\psi)\lor(\varphi\land\chi)$;
	\item[($\rho$-B9)] $\Jzero\Jdue\varphi\leftrightarrow\neg\Jdue\varphi$;
	\item[($\rho$-B10)] $\Jdue\varphi\leftrightarrow\neg(\Jzero\varphi\lor\Juno\varphi)$;
	\item[($\rho$-B11)] $\Jdue\varphi\lor\neg\Jdue\varphi\leftrightarrow 1$;
	\item[($\rho$-B12)] $\Jdue(\varphi\lor\psi)\leftrightarrow(\Jdue\varphi\land\Jdue\psi)\lor(\Jdue\varphi\land\Jzero\psi)\lor(\Jzero\varphi\land\Jdue\psi)$.
\end{itemize}
\vspace{10pt}
\noindent
\textbf{Deductive rules}
\begin{itemize}
        \item[($\rho$-B13)] $\Jdue\varphi\leftrightarrow\Jdue\psi,\Jzero\varphi\leftrightarrow\Jzero\psi\vdash\varphi\equiv\psi$;
        \item[(BAlg3)] $\varphi\dashv\vdash\Jdue\varphi\leftrightarrow 1$.
\end{itemize}

\end{definition}

The axiomatization is equivalent to Finn and Grigolia's calculus, moreover it consists of a proper set of axiom schemata. In fact Finn and Grigolia impose a syntactic restriction on axioms (A19)-(A21), as a result those are not schemata, instead each point represents a countable set of schemata. On the contrary, the above axiomatization makes no distinction between external and non-external formulas, hence it enjoys a proper closure under substitution.

The following results recaps some basic properties of $\B$ which will be used in the following sections.
\begin{lemma}\label{lemma: fatti basilari logica B}
	The following facts hold in $\B$: 
	\begin{enumerate}
            \item $\varphi,\varphi\to\psi\vdash \psi$; 
		\item   $\vdash \alpha\leftrightarrow\Jdue\alpha $, for $\alpha$ external formula; 
            \item $\varphi\sineq\Jdue\varphi $; 
            \item If $\alpha$ is an external formula and a classical theorem then $\vdash\alpha$;
      	\item $\neg\varphi\sineq\Jzero\varphi $;
            \item $\vdash\Jzero\varphi\to\neg\Jdue\varphi $;
            \item $\vdash\Jzero\varphi\land\Jdue\varphi\to 0$;
            \item $\vdash\neg\Jdue\varphi\to\Juno\varphi\lor\Jzero\varphi$;
            \item $\vdash\Juno\varphi\leftrightarrow\Juno\neg\varphi$;
            \item $\vdash\Juno\alpha\to 0$ for $\alpha$ external formula;
            \item $\neg\Jdue\neg\varphi \vdash \Jdue\varphi\lor\Juno\varphi$.
            
  
	\end{enumerate}
 where $\varphi\leftrightarrow\psi$ is an abbrevation for $(\varphi\to\psi)\land(\psi\to\varphi)$.
\end{lemma}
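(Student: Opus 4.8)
The plan is to verify each of the eleven facts by combining the semantic characterization of $\B$ (the matrix $\pair{\WKt,\{1\}}$) with the Hilbert-style calculus of Definition \ref{def: assiomatizzazione B}. Since $\B$ is complete with respect to the matrix, the cleanest route for most items is semantic: a statement $\Gamma\vdash\varphi$ (or $\vdash\varphi$) holds iff every homomorphism $h\colon\Fm\to\WKt$ sending $\Gamma$ into $\{1\}$ also sends $\varphi$ to $1$. I would therefore read off the truth-tables in Figure \ref{fig:WKe} and the derived tables for $\Jzero,\Juno,\land,\to,\equiv$, checking each claim by a finite case analysis on the value of $\varphi$ (and $\psi$ where relevant) in $\{0,\ant,1\}$. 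Item (1) is simply \emph{modus ponens}, which is the rule [MP] guaranteed by the classical fragment; alternatively it is immediate from the $\to$-table, since $h(\varphi)=1$ and $h(\varphi\to\psi)=1$ force $h(\psi)=1$.

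The items splitting into external versus arbitrary formulas warrant slightly different handling. For (2) and (4), the key observation is that an external formula $\alpha$ takes only Boolean values $\{0,1\}$ under every $h$ (the remark following Definition \ref{def: formula esterna}); on those values $\Jdue$ acts as the identity, so $\alpha\leftrightarrow\Jdue\alpha$ evaluates to $1$ always, giving (2), and any classical tautology built from external formulas stays in the Boolean subalgebra and so is designated, giving (4). For (10), an external $\alpha$ never takes the value $\ant$, whence $\Juno\alpha$ is always $0$ by the $\Juno$-table, and $0\to 0$ evaluates to $1$. The interderivability claims (3) and (5) follow from the computation that $\Jdue\varphi$ and $\varphi$ agree on the designated set $\{1\}$ (they are equal on $1$, and $\Jdue\varphi$ is $0$ whenever $\varphi\in\{0,\ant\}$), and dually $\Jzero\varphi$ agrees with $\neg\varphi$ as to membership in $\{1\}$; here one should be careful to argue interderivability ($\sineq$) rather than provable equivalence, since these formulas are \emph{not} equal as elements of $\WKt$ at the value $\ant$.

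The remaining items (6)--(9) and (11) are direct truth-table verifications. For (6), if $h(\Jzero\varphi)=1$ then $h(\varphi)=0$, so $h(\Jdue\varphi)=0$ and $h(\neg\Jdue\varphi)=1$. For (7), $h(\Jzero\varphi)$ and $h(\Jdue\varphi)$ are never both $1$ (they designate on $\varphi=0$ and $\varphi=1$ respectively), so the conjunction is never designated and the conditional into $0$ is a theorem. Item (8) records that $\neg\Jdue\varphi$ designated (i.e.\ $h(\varphi)\neq 1$) forces $\varphi\in\{0,\ant\}$, the two cases feeding $\Jzero\varphi$ and $\Juno\varphi$ respectively. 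Item (9) is the symmetry $\Juno\varphi=\Juno\neg\varphi$, immediate since $\neg$ fixes $\ant$ and swaps $0,1$ while $\Juno$ detects only the value $\ant$. Item (11) again uses that $h(\neg\Jdue\neg\varphi)=1$ means $h(\varphi)\neq 0$, i.e.\ $\varphi\in\{1,\ant\}$, which are exactly the cases designating $\Jdue\varphi$ and $\Juno\varphi$.

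I expect no genuine obstacle, as completeness reduces everything to finite matrix checks; the only point demanding care is the consistent distinction between the object-level equivalence connectives ($\leftrightarrow$ and $\equiv$) and the metalinguistic interderivability $\sineq$, so that items such as (3) and (5) are proved as mutual entailments (possibly invoking rule (BAlg3) and Lemma items already available) rather than as equalities in the algebra, where they would fail at the infectious value.
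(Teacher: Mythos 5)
Your proposal is correct, and there is nothing to compare it against: the paper states this lemma without proof, treating it as a recap of routine facts. Your semantic route is legitimate precisely because the paper explicitly identifies $\vdash_{\B}$ with the consequence relation of the matrix $\pair{\WKt,\{1\}}$ (completeness of the Hilbert calculus being established in the cited reference \cite{Ignoranzasevera}), so each item does reduce to a finite check over $\{0,\ant,1\}$, and all your case analyses are accurate. You also caught the one genuine pitfall: items (3) and (5) hold only as interderivabilities $\sineq$, not as provable biconditionals, since e.g.\ $\varphi\leftrightarrow\Jdue\varphi$ and $\neg\varphi\leftrightarrow\Jzero\varphi$ take the infectious value $\ant$ when $h(\varphi)=\ant$ and so are not theorems --- exactly why the lemma is phrased as it is. One small remark on item (1): in the new axiomatization of Definition \ref{def: assiomatizzazione B}, [MP] is not a primitive rule (the rules are ($\rho$-B13) and (BAlg3)), so your parenthetical appeal to ``the rule [MP]'' is only immediate for the Finn--Grigolia calculus; but your semantic fallback (that $h(\varphi)=1$ and $h(\neg\varphi\lor\psi)=1$ force $h(\psi)=1$) settles it for $\vdash_{\B}$ regardless, which is all the paper's later uses require.
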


Finally, let us recall that $\B$ has a deduction theorem, in the following form. 

\begin{theorem}[Deduction Theorem for $\B$]\label{th: deduzione per Be}
	It holds that $\Gamma\vdash_{\B}\varphi$ iff there exist some formulas $\gamma_1,\dots,\gamma_n\in \Gamma$ such that $\vdash_{\B}\Jdue\gamma_1\wedge\dots\wedge\Jdue\gamma_n\to \Jdue\varphi$.
\end{theorem}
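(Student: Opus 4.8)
The plan is to prove the Deduction Theorem for $\B$ by establishing each direction separately, exploiting the algebraizability of $\B$ and the basic facts collected in Lemma \ref{lemma: fatti basilari logica B}. The key observation driving the whole argument is that, by item (3) of that lemma, $\varphi\sineq\Jdue\varphi$, so that every formula is interderivable with its externalization $\Jdue\varphi$, and that $\Jdue\varphi$ is always an external formula. This lets us reduce questions about arbitrary formulas to questions about external formulas, on which $\B$ behaves classically (item (2) gives $\vdash\alpha\leftrightarrow\Jdue\alpha$ for external $\alpha$, and item (4) lets us import classical theorems).

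For the right-to-left direction, suppose $\vdash_{\B}\Jdue\gamma_1\wedge\dots\wedge\Jdue\gamma_n\to\Jdue\varphi$ for some $\gamma_1,\dots,\gamma_n\in\Gamma$. First I would use item (3) to pass from each $\gamma_i\in\Gamma$ to $\Jdue\gamma_i$, so that $\Gamma\vdash_{\B}\Jdue\gamma_i$ for each $i$; then, since conjunction is available and $\Jdue\gamma_i$ are external, I would assemble $\Gamma\vdash_{\B}\Jdue\gamma_1\wedge\dots\wedge\Jdue\gamma_n$. Combining this with the assumed theorem and modus ponens (item (1)) yields $\Gamma\vdash_{\B}\Jdue\varphi$, and a final application of item (3) in the other direction gives $\Gamma\vdash_{\B}\varphi$, as required.

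The left-to-right direction is the substantive one and where the main obstacle lies. The natural strategy is an induction on the length of a derivation of $\varphi$ from $\Gamma$, but the usual classical deduction-theorem induction does not transfer verbatim because the rule $(\rho\text{-B13})$ and the rule $(\text{BAlg3})$ are genuine multiple-premise/bidirectional rules rather than plain modus ponens, and because the $\equiv$-based axioms are not implicational. The cleaner route is to invoke algebraizability directly: translating through the transformers $\tau(\varphi)=\{\varphi\approx 1\}$ and the equivalent algebraic semantics $\mathcal{BCA}$, the deduction $\Gamma\vdash_{\B}\varphi$ becomes an equational consequence $\{\gamma\approx 1:\gamma\in\Gamma\}\vDash_{\mathcal{BCA}}\varphi\approx 1$, and finitarity of $\B$ lets us restrict to finitely many premises $\gamma_1,\dots,\gamma_n$. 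I would then argue algebraically that in every Bochvar algebra, $\gamma_1=1,\dots,\gamma_n=1$ forcing $\varphi=1$ is equivalent to the identity $\Jdue\gamma_1\wedge\dots\wedge\Jdue\gamma_n\to\Jdue\varphi\approx 1$ holding universally. This last step rests on the idempotent, two-valued behaviour of the $\Jdue$-images (each $\Jdue\delta$ lands in the Boolean part $\{0,1\}$), so that the external implication computes classically and the equation $\Jdue\gamma_i\approx 1$ is equivalent to $\gamma_i\approx 1$.

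The main difficulty, then, is handling the non-implicational and multiple-premise rules correctly, which I expect to manage precisely by not inducting syntactically but instead moving to the equational side, where the content of the Deduction Theorem becomes the transparent statement that a conjunction of Boolean-valued hypotheses entails a Boolean-valued conclusion exactly when the corresponding classical implication is a theorem. Facts (5)--(11) of Lemma \ref{lemma: fatti basilari logica B} will be used to manipulate the interaction of $\neg$, $\Jdue$, $\Jzero$, and $\Juno$ during the algebraic verification, in particular to confirm that $\Jdue$ distributes appropriately over the connectives occurring in the constructed implication.
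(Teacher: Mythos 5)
Your proposal is correct, but it takes a genuinely different route from the paper. The paper does not prove this statement locally: it recalls it (the result comes from the algebraizability framework of \cite{Ignoranzasevera}), and when it needs the analogous modal result (Theorem \ref{th: teo. deduzione Bochvar modale}) it argues \emph{syntactically}, by induction on the length of the derivation, handling the rules case by case (the displayed case being (N)). You instead prove the substantive left-to-right direction \emph{semantically}: finitarity of $\B$ reduces $\Gamma\vdash_{\B}\varphi$ to $\gamma_1,\dots,\gamma_n\vdash_{\B}\varphi$; algebraizability with $\tau(\varphi)=\{\varphi\approx 1\}$ turns this into a quasi-identity over $\mathcal{BCA}$; and since $\mathcal{BCA}$ is the quasi-variety generated by $\WKt$, both the quasi-identity and the candidate identity $(\Jdue\gamma_1\wedge\dots\wedge\Jdue\gamma_n\to\Jdue\varphi)\approx 1$ need only be checked in $\WKt$, where they are easily seen to be equivalent because every $\Jdue\delta$ lands in the Boolean part $\{0,1\}$, on which $\to$ computes classically and $\Jdue\delta=1$ iff $\delta=1$. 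This check is sound, and your route buys a clean argument that entirely sidesteps the awkward rules $(\rho\text{-B13})$ and (BAlg3) and the non-implicational $\equiv$-axioms, which is exactly where a naive syntactic induction gets delicate. What it costs is generality: the syntactic induction is the one that transfers to the modal expansion $\B^{\square}$, for which the paper has (at this stage) no algebraic semantics, so your method proves the propositional theorem but could not replace the paper's proof of Theorem \ref{th: teo. deduzione Bochvar modale}. Two small points you should make explicit in a final write-up: the quoted equivalence between the Hilbert calculus and the matrix consequence (used when you invoke finitarity of the matrix-defined logic) is itself a cited result, and the transfer of finite-premise equational consequence from $\mathcal{BCA}$ to its generator $\WKt$ is exactly the standard fact that a quasi-variety satisfies precisely the quasi-identities of its generating algebra.
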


\subsection{External paraconsistent weak Kleene logic}

The Hilbert-style axiomatization for $\PWKe$, introduced by Segerberg \cite{Segerberg65} is the following.

\vspace{5pt}
\noindent
\textbf{Axioms}
\begin{enumerate}
\item[(A1)] $(\varphi\lor\varphi)\to\varphi$;
\item[(A2)] $\varphi\to(\varphi\lor\psi)$;
\item[(A3)] $(\varphi\lor\psi)\to(\psi\lor\varphi)$;
\item[(A4)] $(\varphi\to\psi)\to((\gamma\lor\varphi)\to(\gamma\lor\psi))$;
\item[(A5)] $(\varphi\land\psi)\to\lnot(\lnot\varphi\lor\lnot\psi)$;
\item[(A6)] $\lnot(\lnot\varphi\lor\lnot\psi)\to(\varphi\land\psi)$;
\item[(A7)] $\varphi\to 1$;
\item[(A8)] $0\to\varphi$;
\item[(A9)] $\varphi\to\Jdue\varphi$;
\item[(A10)] $\Jdue\varphi\to\neg\Jzero\varphi$;
\item[(A11)] $\Jdue(\varphi\land\psi)\leftrightarrow\Jdue\varphi\land\Jdue\psi$;
\item[(A12)] $\Jdue(\varphi\vee\psi)\leftrightarrow((\Jdue\varphi\land\Jdue\psi)\vee(\Jdue\varphi\land\Jzero\psi)\vee(\Jzero\varphi\land\Jdue\psi))$.
\end{enumerate}

\vspace{10pt}
\noindent
\textbf{Deductive rule}
\[
\begin{prooftree}
  \Hypo{\varphi}  \Hypo{\varphi\to\psi}
  \Infer[left label = {[RMP]}]2[\quad provided that no variable is open in $\varphi$ and covered in $\psi$ .] \psi
\end{prooftree}
\]

As before, there is nothing special behind the choice of the axioms (A1) to (A8): one can simply choose any set of axioms which, together with the (usual) rule of Modus Ponens yields an axiomatization of (propositional) classical logic. Observe that the rule [RMP] consists of a linguistic restriction of the standard rule of Modus Ponens: a fact that shall not surprise, as a very similar restricted rule has been introduced for an axiomatization of $\PWK$ (in the language of classical logic) \cite{Bonzio16SL}. However, providing the same logic with a ``standard'' calculus presenting no linguistic restriction is preferable (for instance, for internal $\PWK$, such Hilbert-style axiomatizations can be found in \cite{MarcelinoHilbertPWK}, \cite{BonzioMorascoPrabaldi}). As in the case of Bochvar, also for $\PWKe$, we will indicate by $\vdash_{\PWKe}$ both the consequence relation induced by the matrix $\pair{\WKt,\{1,\ant\}}$ and the one induced by the above Hilbert-style axiomatization.

Like $\B$, also the logic $\PWKe$ is algebraizable with quasi-variety of Bochvar algebras as its equivalent algebraic semantic. The transformers that witness the algebraizability are $\tau(\varphi)\coloneqq \{\neg\Jzero\varphi\thickapprox 1\}$ and  $\rho(\varphi\thickapprox\psi)\coloneqq\{\varphi\equiv\psi\}$. We leave the proof of this result in the Appendix (see Section \ref{sec: Appendice}). Observe that, although it is not very common (see \cite[pp.121-122]{FontBook}), the same class of algebras can play the role of equivalent algebraic semantics for different logics (clearly, the algebraizability is given by different transformers).

This algebraizability allows us to apply the algorithm of \cite{FontBook} already employed for $\B$, obtaining a Hilbert-style axiomatization alternative to Segerberg's.

\begin{definition}\label{def: assiomatizzazione PWK}
A Hilbert-style axiomatization of $\PWKe$ is given by the following Axioms and Rules. \\
\noindent
\textbf{Axioms}
\begin{itemize}
	\item[($\rho$-B1)] $\varphi\lor\varphi\equiv\varphi$;
	\item[($\rho$-B2)] $\varphi\lor\psi\equiv\psi\lor\varphi$;
	\item[($\rho$-B3)] $(\varphi\lor\psi)\lor \chi\equiv\varphi\lor(\psi\lor\chi)$;
	\item[($\rho$-B4)] $\varphi\lor 0\equiv\varphi$;
        \item[($\rho$-B5)] $\neg\neg\varphi\equiv\varphi$;
        \item[($\rho$-B6)] $\neg(\varphi\lor\psi)\equiv\neg\varphi\land\neg\psi$;
        \item[($\rho$-B7)] $\neg 1\equiv 0$;
        \item[($\rho$-B8)] $\varphi\land(\psi\lor \chi)\equiv(\varphi\land\psi)\lor(\varphi\land\chi)$;
	\item[($\rho$-B9)] $\Jzero\Jdue\varphi\leftrightarrow\neg\Jdue\varphi$;
	\item[($\rho$-B10)] $\Jdue\varphi\leftrightarrow\neg(\Jzero\varphi\lor\Juno\varphi)$;
	\item[($\rho$-B11)] $\Jdue\varphi\lor\neg\Jdue\varphi\leftrightarrow 1$;
	\item[($\rho$-B12)] $\Jdue(\varphi\lor\psi)\leftrightarrow(\Jdue\varphi\land\Jdue\psi)\lor(\Jdue\varphi\land\Jzero\psi)\lor(\Jzero\varphi\land\Jdue\psi)$.
\end{itemize}
\vspace{10pt}
\noindent
\textbf{Deductive rules}
\begin{itemize}
        \item[($\rho$-B13)] $\Jdue\varphi\leftrightarrow\Jdue\psi,\Jzero\varphi\leftrightarrow\Jzero\psi\vdash\varphi\equiv\psi$;
        \item[(PWKAlg3)] $\varphi\dashv\vdash\neg\Jzero\varphi\leftrightarrow 1$.
\end{itemize}

\end{definition}

The only difference between the new axiomatizations proposed for $\B$ and $\PWKe$ relies on the rules (PWKAlg3) and (BAlg3), which is expected since the two logics have the same equivalent algebraic semantics and differ only for the $\tau$ transformer.

\begin{lemma}\label{lemma: fatti basilare PWKe}

The following facts hold for the logic $\PWKe$: 
\begin{enumerate}

\item $\alpha,\alpha\to\beta\vdash\beta$, for every $\alpha,\beta$ external formulas;
\item $\varphi\sineq \neg\Jzero\varphi $;
\item every theorem of classical logic is a theorem of $\PWKe$;
\item $\vdash\neg\Jzero 0\to 0$;
\item $\vdash \neg\varphi\to\Jzero\varphi$;
\item   $\vdash\alpha\leftrightarrow\Jdue\alpha$, for $\alpha$ external formula;
\item $\varphi\vdash\Jdue\varphi\lor\Juno\varphi$;
\item $\vdash\Jdue\neg\varphi\leftrightarrow\Jzero\varphi$;
\item $\vdash\Juno\varphi\leftrightarrow\Juno\neg\varphi$;
\item $\vdash\neg\Jzero 1$;
\item $\vdash\varphi\to\Jdue\varphi$;
\item $\vdash\Jdue\varphi\to +\varphi$.

\end{enumerate}
\end{lemma}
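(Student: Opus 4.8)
The plan is to prove every item by reducing it to a finite verification on the three-element algebra $\WKt$. Since $\vdash_{\PWKe}$ is, by soundness and completeness, the consequence relation of the matrix $\pair{\WKt,\{1,\ant\}}$, writing $D=\{1,\ant\}$ for the designated set, a judgement $\vdash\varphi$ holds exactly when $h(\varphi)\in D$ for every homomorphism $h\colon\Fm\to\WKt$, a judgement $\Gamma\vdash\varphi$ holds exactly when every $h$ sending $\Gamma$ into $D$ also sends $\varphi$ into $D$, and an interderivability $\varphi\sineq\psi$ holds exactly when $h(\varphi)\in D\iff h(\psi)\in D$ for all $h$. As each formula occurring in the statement is built from at most one propositional variable $\varphi$ (or from external formulas), it suffices to run through the three possible values $0,\ant,1$ of that variable and read off the tables of Figure \ref{fig:WKe} together with the derived tables for $\Jzero$, $\Juno$, $\to$ and $\equiv$.

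The two genuinely ``global'' items are (1) and (3), and they are where the paraconsistent choice $\ant\in D$ does the work. For (3): given a classical tautology $\alpha$, any $h$ assigning only the classical values $0,1$ to its variables evaluates $\alpha$ exactly as in Boolean logic, hence to $1\in D$; and any $h$ assigning $\ant$ to some variable of $\alpha$ forces $h(\alpha)=\ant\in D$ by the infectiousness of the third value. Thus $h(\alpha)\in D$ always, and $\alpha$ is a theorem. This is precisely the point at which $\PWKe$ diverges from $\B$, where the analogous fact must be restricted to external formulas because there $\ant$ is undesignated. For (1): if $\alpha,\beta$ are external they take values in $\{0,1\}$ only, so on such formulas $\to$ restricts to the Boolean conditional and the argument is the classical one for modus ponens.

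The remaining items are dispatched by direct inspection of the three cases. Items (8) and (9) are in fact equalities at every point: $\Jzero\varphi$ is by definition $\Jdue\neg\varphi$, and $\Juno\varphi$ and $\Juno\neg\varphi$ agree everywhere since $\neg\ant=\ant$, so the biconditionals reduce to instances of $\psi\leftrightarrow\psi$ on formulas that never take the value $\ant$. Item (2) records that $\neg\Jzero\varphi$ takes the value $1$ exactly when $\varphi\in D$ and $0$ otherwise, which is simultaneously the content of the algebraizing transformer $\tau(\varphi)=\{\neg\Jzero\varphi\thickapprox 1\}$; items (10), (4), (5), (6), (11), (12) and the single-premise rule (7) are each settled by evaluating the displayed formula at $\varphi=0,\ant,1$ (using, for (6), that an external $\alpha$ avoids $\ant$) and checking that the result always lands in $D$ for the theorems, respectively that $\varphi\in D$ entails the conclusion lies in $D$ for (7).

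The only place demanding any care, rather than mere computation, is item (3): one must argue uniformly about an arbitrary classical tautology instead of a fixed formula, and the key observation is that the infectious semantics sends every non-classical assignment to the designated value $\ant$, so that enlarging the designated set from $\{1\}$ to $\{1,\ant\}$ is exactly what salvages the whole of classical propositional validity. Alternatively, all twelve items can be derived syntactically inside the calculus of Definition \ref{def: assiomatizzazione PWK} (using (PWKAlg3), ($\rho$-B13) and the Bochvar-algebra identities), but the matrix verification above is both shorter and more transparent.
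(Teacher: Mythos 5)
Your proof is correct, and in fact the paper offers no proof of this lemma at all: it is stated as a list of routine facts (just like the analogous Lemma~\ref{lemma: fatti basilari logica B} for $\B$), so there is no official argument to diverge from. Your reduction to a finite check over $\WKt$ is exactly the intended justification, and it is licensed by the paper's explicit convention that $\vdash_{\PWKe}$ denotes \emph{both} the consequence relation of the matrix $\pair{\WKt,\{1,\ant\}}$ and the one induced by the Hilbert calculus (an identification backed by Segerberg's completeness theorem and by the algebraizability result of Section~\ref{sec: Appendice}, from which the calculus of Definition~\ref{def: assiomatizzazione PWK} is extracted). One caveat worth making explicit: if the lemma were read as a claim about the Hilbert system \emph{prior} to establishing its completeness, your semantic argument would be circular, so it matters that the identification is already in force where the lemma is stated; a purely syntactic derivation from the axioms and rules (PWKAlg3) and ($\rho$-B13), which you mention as an alternative, would avoid even this dependence. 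Two small wording corrections: the case analysis should run over the three possible values of $h(\varphi)$ for a homomorphism $h$ and an arbitrary (schematic) formula $\varphi$, not over values ``of that variable'', since the items are schemata; and note that in items (5) and (11) the third case yields the value $\ant$, so the verification genuinely uses that theoremhood in $\PWKe$ means landing in $D=\{1,\ant\}$ rather than being equal to $1$ --- your phrasing handles this correctly, but it is the one spot where the computation differs from the $\B$ counterpart. Your observation about item (3) --- that infectiousness plus the enlarged designated set recovers all of classical validity, whereas in $\B$ the corresponding fact must be restricted to external formulas --- is accurate and matches Lemma~\ref{lemma: fatti basilari logica B}(4).
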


$\PWKe$ has a Deduction Theorem very similar to $\B$, by just adapting the statement of Theorem \ref{th: teo. deduzione Bochvar modale} (from truth) to non-falsity, in the obvious way suggested by external connectives.

\begin{theorem}[Deduction Theorem for $\PWKe$]\label{lemma: deduzione PWK modale}
It holds that $\Gamma\vdash_{\PWKe}\varphi$ iff there exist some formulas $\gamma_1,\dots,\gamma_n\in \Gamma$ such that $\vdash_{\PWKe}\neg\Jzero\gamma_1\wedge\dots\wedge\neg\Jzero\gamma_n\to \neg\Jzero\varphi$.
\end{theorem}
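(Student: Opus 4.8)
The plan is to prove the Deduction Theorem for $\PWKe$ by mirroring the structure of Theorem~\ref{th: deduzione per Be}, exploiting the fact that the two logics share the same equivalent algebraic semantics (the quasi-variety of Bochvar algebras) and differ only in the $\tau$-transformer. The key observation is that for $\PWKe$ the relevant transformer is $\tau(\varphi)=\{\neg\Jzero\varphi\approx 1\}$, so the formula $\neg\Jzero\varphi$ plays for $\PWKe$ exactly the role that $\Jdue\varphi$ plays for $\B$. Thus the whole argument is obtained by systematically replacing $\Jdue$ with $\neg\Jzero$ throughout, using Lemma~\ref{lemma: fatti basilare PWKe}(2), which states $\varphi\sineq\neg\Jzero\varphi$, as the analogue of the interderivability $\varphi\sineq\Jdue\varphi$ used in the Bochvar case.

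First I would prove the right-to-left direction, which is the easier one. Assume $\vdash_{\PWKe}\neg\Jzero\gamma_1\wedge\dots\wedge\neg\Jzero\gamma_n\to\neg\Jzero\varphi$ for some $\gamma_1,\dots,\gamma_n\in\Gamma$. By Lemma~\ref{lemma: fatti basilare PWKe}(2) we have $\gamma_i\vdash_{\PWKe}\neg\Jzero\gamma_i$ for each $i$, and since $\neg\Jzero\gamma_i$ are external formulas one can conjoin them using the classical reasoning available for external formulas (Lemma~\ref{lemma: fatti basilare PWKe}(1),(3),(6)) to obtain $\Gamma\vdash_{\PWKe}\neg\Jzero\gamma_1\wedge\dots\wedge\neg\Jzero\gamma_n$. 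Applying modus ponens restricted to external formulas (Lemma~\ref{lemma: fatti basilare PWKe}(1)) with the assumed implication yields $\Gamma\vdash_{\PWKe}\neg\Jzero\varphi$, and then Lemma~\ref{lemma: fatti basilare PWKe}(2) in the other direction gives $\Gamma\vdash_{\PWKe}\varphi$.

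For the left-to-right direction I would argue by induction on the length of a derivation of $\varphi$ from $\Gamma$, tracking which premises $\gamma_1,\dots,\gamma_n$ are actually used. The base cases are axioms (where the implication $\neg\Jzero\gamma_1\wedge\dots\to\neg\Jzero\varphi$ follows from $\vdash\neg\Jzero\varphi$, using Lemma~\ref{lemma: fatti basilare PWKe}(2) and classical reasoning on external formulas) and hypotheses $\gamma\in\Gamma$ (where one needs $\vdash_{\PWKe}\neg\Jzero\gamma\to\neg\Jzero\gamma$, which is a classical tautology among external formulas). The inductive step must handle each deductive rule of the axiomatization, so the subtle part is the treatment of the restricted modus ponens rule [RMP] and the rule (PWKAlg3) $\varphi\dashv\vdash\neg\Jzero\varphi\leftrightarrow 1$. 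The cleanest route is to work with the $\rho$-based axiomatization of Definition~\ref{def: assiomatizzazione PWK}, where the only genuinely nonstandard rule is (PWKAlg3); here one must check that passing between $\psi$ and $\neg\Jzero\psi\leftrightarrow 1$ is compatible with prefixing the external conjunction $\neg\Jzero\gamma_1\wedge\dots\wedge\neg\Jzero\gamma_n\to(-)$, which works because $\neg\Jzero$ is idempotent-like (the outer $\neg\Jzero$ sees only the external value) and because external implications behave classically.

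The main obstacle I anticipate is verifying the inductive step for the non-standard rule, where one cannot simply invoke the classical deduction theorem: one has to confirm that the $\neg\Jzero$-prefixing respects the biconditional rule (PWKAlg3) and that the external conjunction of the $\neg\Jzero\gamma_i$ can be discharged uniformly. This is precisely the point at which the algebraic reading is helpful, since the finitarity of $\PWKe$ and the algebraizability guarantee that derivability corresponds to the equational consequence $\{\neg\Jzero\gamma_i\approx 1\}\vDash_{\mathcal{BCA}}\neg\Jzero\varphi\approx 1$, and in the quasi-variety of Bochvar algebras the element $\neg\Jzero\varphi$ lives in the Boolean $\{0,1\}$-part, so the deduction step reduces to the classical deduction theorem for Boolean algebras applied to these external witnesses. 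Because the structure is entirely parallel to the Bochvar case, I expect the proof to be a faithful transcription of that of Theorem~\ref{th: deduzione per Be}, with $\Jdue$ replaced by $\neg\Jzero$ and Lemma~\ref{lemma: fatti basilari logica B} replaced by its $\PWKe$-counterpart Lemma~\ref{lemma: fatti basilare PWKe}.
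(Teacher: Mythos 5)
Your proposal is correct and takes essentially the same route as the paper: the paper's own proof of this theorem is precisely the ``obvious adaptation'' you describe, namely transcribing the induction-on-derivation-length argument of the Bochvar deduction theorem with $\Jdue$ systematically replaced by $\neg\Jzero$ and Lemma \ref{lemma: fatti basilare PWKe} playing the role of Lemma \ref{lemma: fatti basilari logica B}. Your right-to-left direction in particular coincides step by step with the paper's (derive $\Gamma\vdash\neg\Jzero\gamma_i$, conjoin these external formulas classically, apply modus ponens on external formulas, then conclude via $\neg\Jzero\varphi\vdash\varphi$), and your handling of the rules (PWKAlg3) and ($\rho$-B13) is at the same level of detail the paper itself provides.
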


\section{Modal Bochvar logic}\label{sec: Modal}

From now on, by $\Fm $ we will denote the formula algebra constructed over a denumerable set of propositional variables $Var$ in the language $\mathcal{L}: \neg,\vee, \Jdue, \square ,0,1$ of type $\pair{1,2,1,1,0,0}$. The connectives $\land,\to$ are defined as usual, while recall that $\Jzero\varphi$ and $\Juno\varphi$ are abbreviations for $\Jdue\neg\varphi$ and $ \neg (\Jdue\varphi \lor \Jdue\neg\varphi)$, respectively. Let, moreover, $\diamond\varphi$ be an abbreviation for $\neg\square\neg\varphi$. Our aim with the above introduced language is to define a (local) modal logic whose propositional basis is Bochvar external logic and whose interpretation of formulas $\square\varphi$, in a relational semantics, is that $\square\varphi$ holds in a state when $\varphi$ holds (is equal to 1) in all related states.

We introduce the logic $\pair{\Fm,\vdash_{\B^{\square}}}$ as induced by the following Hilbert-style axiomatization. \\


\noindent
\textbf{Axioms}
\begin{itemize}
	\item the axioms of $\B$ in Definition \ref{def: assiomatizzazione B}; 
	\vspace{3pt}
	\item[(B1)]  $\square(J_2\varphi\to J_2\psi)\to (\square J_2\varphi\to\square J_2\psi)$;
	\vspace{3pt}
	\item[(B2)]  $+\varphi\leftrightarrow +\square \varphi$; 
	\vspace{3pt}
	\item[(B3)] $\Jdue\square\varphi\to\square\Jdue \varphi$;
	\vspace{3pt}
	\item[(B4)] $\Jzero\square\varphi\to\neg\square\Jzero\neg \varphi$.
	
\end{itemize}

\vspace{4pt}
\noindent
\textbf{Deductive rules}

\begin{itemize}
 \item ($\rho$-B13) $\Jdue\varphi\leftrightarrow\Jdue\psi,\Jzero\varphi\leftrightarrow\Jzero\psi\vdash\varphi\equiv\psi$;
 \vspace{5pt}
        \item(BAlg3) $\varphi\dashv\vdash\Jdue\varphi\leftrightarrow 1$;
	\vspace{5pt}
	
	\item (N): if $\vdash \varphi $ then $\vdash \square\varphi$. 
	
	

\end{itemize}

\vspace{10pt}
\noindent
Throughout this Section, for ease of notation, we will write $\vdash$ instead of $\vdash_{\B^{\square}}$ and refer to the above introduced logic simply as $\B^{\square}$. The axiom (B1) can be generalized to all external formulas, as follows. 

\begin{lemma}\label{lemma: K per Bochvar su esterne}
    For $\alpha,\beta$ external formulas, the following is a theorem of $\B^{\square}$:
    \begin{itemize}
        \item[\emph{(BK)}] $\square(\alpha\to\beta)\to (\square\alpha\to\square\beta)$
    \end{itemize}    
\end{lemma}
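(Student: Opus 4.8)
The plan is to reduce (BK) to axiom (B1), which is exactly the desired schema but with every external formula $\alpha,\beta$ replaced by a $\Jdue$-prefixed one. The bridge is the basic fact (Lemma~\ref{lemma: fatti basilari logica B}(2)) that $\vdash\alpha\leftrightarrow\Jdue\alpha$ whenever $\alpha$ is external, together with a simple but crucial observation: $\square$ preserves externality. Indeed, if every variable occurrence of $\mu$ lies under the scope of some $J_{k}$, the same occurrences still lie under some $J_{k}$ in $\square\mu$, so $\square\mu$ is external and therefore satisfies $\vdash\square\mu\leftrightarrow\Jdue\square\mu$.

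First I would record a derived monotonicity rule for $\square$ on $\Jdue$-prefixed formulas: if $\vdash\Jdue\chi\to\Jdue\delta$, then applying (N) gives $\vdash\square(\Jdue\chi\to\Jdue\delta)$, and axiom (B1) together with \emph{modus ponens} (Lemma~\ref{lemma: fatti basilari logica B}(1)) yields $\vdash\square\Jdue\chi\to\square\Jdue\delta$. Call this derived rule $(\dagger)$. Using $(\dagger)$ and the propositional congruence of $\neg$ and $\Jdue$ in $\B$, provable equivalences between $\Jdue$-prefixed formulas can be transported through $\square$ by routine external manipulations.

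The core step is the auxiliary claim that $\vdash\square\mu\leftrightarrow\square\Jdue\mu$ for every external $\mu$. For the left-to-right direction I would chain $\vdash\square\mu\to\Jdue\square\mu$ (externality of $\square\mu$) with axiom (B3), namely $\vdash\Jdue\square\mu\to\square\Jdue\mu$. The converse is the delicate point, and this is where the plan leans on (B4): since $\square\mu$ is external, $\vdash\neg\square\mu\leftrightarrow\Jzero\square\mu$; instantiating (B4) at $\mu$ and rewriting $\square\Jzero\neg\mu$ as $\square\Jdue\mu$ (using $\neg\neg\mu\equiv\mu$, the congruence of $\Jdue$, and $(\dagger)$) gives $\vdash\Jzero\square\mu\to\neg\square\Jdue\mu$; combining these yields $\vdash\neg\square\mu\to\neg\square\Jdue\mu$, and contraposing — legitimate because both $\square\mu$ and $\square\Jdue\mu$ are external, hence classically behaved — produces $\vdash\square\Jdue\mu\to\square\mu$.

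Finally I would assemble (BK). From $\alpha,\beta$ external, external classical reasoning (Lemma~\ref{lemma: fatti basilari logica B}(1),(4)) gives $\vdash(\alpha\to\beta)\leftrightarrow(\Jdue\alpha\to\Jdue\beta)$; the auxiliary claim applied to $\mu\in\{\alpha,\beta,\alpha\to\beta,\Jdue\alpha\to\Jdue\beta\}$ together with $(\dagger)$ upgrades this to $\vdash\square(\alpha\to\beta)\leftrightarrow\square(\Jdue\alpha\to\Jdue\beta)$, $\vdash\square\alpha\leftrightarrow\square\Jdue\alpha$, and $\vdash\square\beta\leftrightarrow\square\Jdue\beta$. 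Since $\square(\alpha\to\beta),\square\alpha,\square\beta,\square\Jdue\alpha,\square\Jdue\beta,\square(\Jdue\alpha\to\Jdue\beta)$ are all external, I can combine these equivalences with the (B1)-instance $\vdash\square(\Jdue\alpha\to\Jdue\beta)\to(\square\Jdue\alpha\to\square\Jdue\beta)$ by purely classical propositional logic over external formulas, obtaining $\square(\alpha\to\beta)\to(\square\alpha\to\square\beta)$. The \emph{main obstacle} is precisely the converse half of the auxiliary claim: because $\B^{\square}$ provides no general congruence rule for $\square$, one cannot simply substitute $\alpha$ by $\Jdue\alpha$ under the box, so the replacement must be earned separately, with (B3) handling one direction and (B4) (via contraposition and the externality of boxed external formulas) handling the other.
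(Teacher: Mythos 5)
Your proposal is correct and follows essentially the same route as the paper: instantiate (B1) at $\Jdue\alpha,\Jdue\beta$ and trade $\Jdue\gamma$ for $\gamma$ (for external $\gamma$) via Lemma \ref{lemma: fatti basilari logica B}. The paper compresses the replacement under $\square$ into the single phrase ``we can substitute equivalent formulas'', whereas you justify that substitution explicitly --- the forward direction via the externality of $\square\mu$ and (B3), the converse via (B4) and contraposition over external formulas --- so your write-up is a fully detailed version of the paper's one-line argument, with the same key ingredients.
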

\begin{proof}
    Consider arbitrary external formulas $\alpha,\beta$. Let us start by instantiating (B1) as $\vdash\square(\Jdue\alpha\to\Jdue\beta)\to (\square\Jdue\alpha\to\square\Jdue\beta)$. Recalling Lemma \ref{lemma: fatti basilari logica B}, for $\gamma$ external it holds $\vdash\gamma\leftrightarrow\Jdue\gamma$, therefore we can substitute equivalent formulas and obtain $\vdash\square(\alpha\to\beta)\to (\square\alpha\to\square\beta)$.
\end{proof}

\begin{remark}\label{rem: MP per Bochvar Modale}
The rule of Modus Ponens (MP) obviously holds for $\B^{\Box}$ (see Lemma \ref{lemma: fatti basilari logica B}).   
\end{remark}


\begin{lemma}\label{lemma: fatti basilari logica modale}
	In the logic $\B^{\square}$ the following facts hold: 
	\begin{enumerate}
		\item $\Jdue\square\varphi\vdash \Jdue\square\Jdue\varphi$; 
		\item $\square(J_i\varphi\land J_k\psi) \leftrightarrow \square J_i\varphi\land\square J_k\psi$ for every $i,k \in \{ 0,1,2 \}$;
		\item $\diamond(J_i\varphi\vee J_k\psi) \leftrightarrow \diamond J_i\varphi\vee\diamond J_k\psi$ for $i,k \in \{ 0,1,2 \}$; 
		\item $\vdash \Jdue\diamond\varphi\to \diamond\Jdue\varphi\lor\diamond\Juno\varphi$.
	\end{enumerate}
\end{lemma}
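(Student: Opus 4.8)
The plan is to treat these as the standard normal-modal facts (the necessitation/$K$-distribution machinery of the minimal normal logic), restricted to external arguments, and then to feed the delicate axiom (B4) into the last item. Throughout, the crucial preliminary observation is that $\square$ sends externally-valued formulas to provably classical ones: if $\alpha$ is external then $\vdash\neg\Juno\alpha$ (Lemma \ref{lemma: fatti basilari logica B}(10)), i.e. $\vdash{+}\alpha$, whence (B2) and \emph{modus ponens} give $\vdash{+}\square\alpha$; since $\Juno\square\alpha$ is itself external, this licenses full classical propositional reasoning (contraposition, De Morgan, double-negation, and the coincidence of $\leftrightarrow$ with $\equiv$) on any compound built from such boxed formulas. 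From (BK) (Lemma \ref{lemma: K per Bochvar su esterne}) and the rule (N) I first record the routine replacement/monotonicity lemmas for external $\gamma,\delta$: from $\vdash\gamma\to\delta$ one gets $\vdash\square\gamma\to\square\delta$ and $\vdash\diamond\gamma\to\diamond\delta$ (the latter by contraposing $\vdash\square\neg\delta\to\square\neg\gamma$, legitimate because $\square\neg\gamma,\square\neg\delta$ are classical), and hence from $\vdash\gamma\leftrightarrow\delta$ that $\vdash\square\gamma\leftrightarrow\square\delta$ and $\vdash\diamond\gamma\leftrightarrow\diamond\delta$.

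For (1), I instantiate (B3) and apply \emph{modus ponens} to pass from $\Jdue\square\varphi$ to $\square\Jdue\varphi$; then, since $\chi\sineq\Jdue\chi$ holds for every formula (Lemma \ref{lemma: fatti basilari logica B}(3), whose derivation uses only (BAlg3), available in $\B^\square$), taking $\chi=\square\Jdue\varphi$ yields $\square\Jdue\varphi\vdash\Jdue\square\Jdue\varphi$. For (2), both $J_i\varphi,J_k\psi$ are external, so $J_i\varphi\land J_k\psi\to J_i\varphi$ and $J_i\varphi\to(J_k\psi\to(J_i\varphi\land J_k\psi))$ are classical tautologies between external formulas, hence theorems by Lemma \ref{lemma: fatti basilari logica B}(4); applying (N) and then (BK) once (resp. twice) gives the two implications $\square(J_i\varphi\land J_k\psi)\to\square J_i\varphi\land\square J_k\psi$ and $\square J_i\varphi\land\square J_k\psi\to\square(J_i\varphi\land J_k\psi)$, which together are (2). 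Item (3) is the dual: $(\leftarrow)$ follows from $\diamond$-monotonicity applied to $J_i\varphi\to J_i\varphi\lor J_k\psi$ and its symmetric version, while $(\to)$ is obtained by contraposing the external-general $\square(\cdot\land\cdot)$-distribution (the same proof as in (2), now applied to the external formulas $\neg J_i\varphi,\neg J_k\psi$), rewriting $\neg J_i\varphi\land\neg J_k\psi$ as $\neg(J_i\varphi\lor J_k\psi)$ via De Morgan (($\rho$-B6)) under $\square$-replacement, and using De Morgan once more on the now-classical conjunction of boxes.

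The substantive item is (4), which is where (B4) enters. First I unfold: by the definitions of $\diamond$ and of $\Jzero$, the formula $\Jdue\diamond\varphi$ is literally $\Jzero\square\neg\varphi$. Instantiating (B4) at $\neg\varphi$ gives $\vdash\Jzero\square\neg\varphi\to\neg\square\Jzero\neg\neg\varphi$; from ($\rho$-B5) I extract $\vdash\Jzero\neg\neg\varphi\leftrightarrow\Jzero\varphi$, so $\square$-replacement turns this into $\vdash\Jdue\diamond\varphi\to\neg\square\Jzero\varphi$. It then remains to identify $\neg\square\Jzero\varphi$ with the target disjunction: by (3) with $i=2,k=1,\psi=\varphi$ one has $\vdash\diamond(\Jdue\varphi\lor\Juno\varphi)\leftrightarrow\diamond\Jdue\varphi\lor\diamond\Juno\varphi$; the exhaustiveness fact $\vdash\Jdue\varphi\lor\Juno\varphi\leftrightarrow\neg\Jzero\varphi$ (a $\B$-theorem derivable from the $J$-fragment, cf. ($\rho$-B10)) together with $\diamond$-replacement gives $\vdash\diamond(\Jdue\varphi\lor\Juno\varphi)\leftrightarrow\diamond\neg\Jzero\varphi$, and since $\Jzero\varphi$ is external, $\vdash\neg\neg\Jzero\varphi\leftrightarrow\Jzero\varphi$ and $\square$-replacement yield $\vdash\diamond\neg\Jzero\varphi\leftrightarrow\neg\square\Jzero\varphi$. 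Chaining these equivalences with the implication coming from (B4) delivers $\vdash\Jdue\diamond\varphi\to\diamond\Jdue\varphi\lor\diamond\Juno\varphi$.

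I expect the main obstacle to be not any single deduction but the disciplined bookkeeping of externality: every appeal to (BK), to contraposition, to De Morgan, or to double-negation elimination must be justified by first checking that the formula in question is (provably) classical, since in the weak-Kleene setting these classical principles fail for genuinely three-valued formulas and $\leftrightarrow$ is not a congruence. Once the preliminary observation that boxed external formulas are classical is in place, each of (1)--(4) reduces to a short and essentially classical modal derivation, with (B4) supplying the one genuinely modal ingredient, in (4).
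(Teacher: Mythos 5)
Your proof is correct and follows essentially the same route as the paper's: (1) via (B3), modus ponens and $\chi\sineq\Jdue\chi$; (2)--(3) via classical tautologies on external formulas, (N), (BK) and contraposition/De Morgan; and (4) by instantiating (B4) at $\neg\varphi$, unfolding the definitions of $\diamond$ and $\Jzero$, and invoking item (3). The only difference is that you make explicit, as replacement steps justified by externality (double negation under $\Jzero$, the exhaustiveness $\vdash\neg\Jzero\varphi\leftrightarrow\Jdue\varphi\lor\Juno\varphi$, and the classicality of boxed external formulas via (B2) --- the last also obtainable directly, since boxing does not uncover variables), what the paper compresses into ``linguistic abbreviation'' identifications, which if anything makes the argument more careful.
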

\begin{proof} Since $\B^{\square}$ contains all axioms from $\B$, we will freely make use of theorems and rules holding in the latter logic. In particular, notice that classical logic can always be employed on external formulas (by Lemma \ref{lemma: fatti basilari logica B} and the fact that MP is a rule of $\B$).
	\begin{enumerate}
		\item By (B3) and (MP), $\Jdue\square\varphi\vdash\square\Jdue \varphi$; by Lemma \ref{lemma: fatti basilari logica B} (and the transitivity of $\vdash$) we get $\square\Jdue\varphi\vdash\Jdue\square\Jdue \varphi$. 
		\item By classical logic, $\vdash J_i\varphi\land J_k \psi \to J_i\varphi$, by (N) $\vdash \square (J_i\varphi\land J_k\psi \to J_i\varphi)$. Applying (BK) and (MP), $\vdash \square (J_i\varphi\land J_k\psi) \to \square J_i\varphi$. By the same reasoning, $\vdash \square (J_i\varphi\land J_k\psi) \to \square J_k\psi$ as well. We conclude $\vdash \square (J_i\varphi\land J_k\psi) \to \square J_i\varphi\land\square J_k\psi$. For the other direction: $\vdash J_i\varphi \to (J_i\psi \to J_i\varphi\land J_k\psi)$ by classical logic. By (N), $\vdash \square(J_i\varphi \to (J_k\psi \to J_i\varphi\land J_k\psi))$, and by (BK) and (MP) twice, we have $\vdash \square J_i\varphi \to (\square J_k\psi \to \square(J_i\varphi\land J_k\psi))$. Again by classical logic $\vdash \square J_i\varphi \land \square J_k\psi \to \square(J_i\varphi\land J_k\psi)$.	
		\item By classical logic, $\vdash \neg J_i\varphi\land\neg J_k\psi \to \neg J_i\varphi$, by (N) $\vdash \square (\neg J_i\varphi\land\neg J_k\psi \to \neg J_i\varphi)$. Applying (BK) and (MP), $\vdash \square (\neg J_i\varphi\land\neg J_k\psi) \to \square\neg J_i\varphi$. By contraposition and de Morgan, $\vdash \neg\square\neg J_i\varphi \to \neg\square\neg (J_i\varphi\lor J_k\psi)$, which is the definition of $\vdash \diamond J_i\varphi \to \diamond (J_i\varphi\lor J_k\psi)$. By the same reasoning, $\vdash \diamond J_k\psi \to \diamond (J_i\varphi\lor J_k\psi)$ as well. We conclude $\vdash \diamond J_i\varphi\lor\diamond J_k\psi \to \diamond (J_i\varphi\lor J_k\psi)$.
		For the other direction, $\vdash \neg J_i\varphi\land\neg J_k\psi \to \neg(J_i\varphi\lor J_k\psi)$ by classical logic. By (N), $\vdash \square(\neg J_i\varphi\land\neg J_k\psi \to \neg(J_i\varphi\lor J_k\psi))$, and by (BK) and (MP) twice we have $\vdash \square(\neg J_i\varphi\land\neg J_k\psi) \to \square\neg(J_i\varphi\lor J_k\psi)$. By point (2) we can distribute box, $\vdash (\square\neg J_i\varphi\land\square\neg J_k\psi) \to \square\neg(J_i\varphi\lor J_k\psi)$. By classical logic, $\vdash \neg(\neg\square\neg J_i\varphi\lor\neg\square\neg J_k\psi) \to \square\neg(J_i\varphi\lor J_k\psi)$. By contraposition we conclude $\vdash \diamond(J_i\varphi\lor J_k\psi) \to \diamond J_i\varphi\lor\diamond J_k\psi$.
		
		\item By (B4) $\vdash \Jzero\Box\neg\varphi\to\neg\Box\Jzero\varphi$. For the linguistic abbreviations introduced, we have that the antecedent $\Jzero\Box\neg\varphi = \Jdue\neg\Box\neg\varphi = \Jdue\diamond\varphi$; while the consequent $\neg\Box\Jzero\varphi = \neg\Box\neg (\Jdue\varphi\lor\Juno\varphi) = \diamond(\Jdue\varphi\lor\Juno\varphi) = \diamond\Jdue\varphi\lor\diamond\Juno\varphi$.
	\end{enumerate}
\end{proof}

\begin{theorem}[Deduction Theorem]\label{th: teo. deduzione Bochvar modale}
	For the logic $\B^{\square}$, it holds that $\Gamma\vdash\varphi$ iff there exist some formulas $\gamma_1,\dots,\gamma_n\in \Gamma$ such that $\vdash\Jdue\gamma_1\wedge\dots\wedge\Jdue\gamma_n\to \Jdue\varphi$.
\end{theorem}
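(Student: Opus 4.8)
The plan is to prove a Deduction Theorem for the modal logic $\B^{\square}$ by leveraging the one already established for the non-modal base $\B$ (Theorem \ref{th: deduzione per Be}), and treating the modal rule (N) as the only genuinely new ingredient. The right-to-left direction is the easy one: assuming $\vdash \Jdue\gamma_1\wedge\dots\wedge\Jdue\gamma_n\to \Jdue\varphi$, I would first use fact (3) of Lemma \ref{lemma: fatti basilari logica B}, namely $\gamma_i\sineq\Jdue\gamma_i$, to derive each $\Jdue\gamma_i$ from $\gamma_i\in\Gamma$, then apply classical reasoning on external formulas (available by Remark \ref{rem: MP per Bochvar Modale} together with the external-formula facts of Lemma \ref{lemma: fatti basilari logica B}) to obtain $\Jdue\varphi$ from the conjunction, and finally use $\Jdue\varphi\vdash\varphi$ (again fact (3)) to conclude $\Gamma\vdash\varphi$.

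For the left-to-right direction I would proceed by induction on the length of a derivation of $\varphi$ from $\Gamma$. The statement to be maintained is that whenever $\Gamma\vdash\psi$, there are $\gamma_1,\dots,\gamma_n\in\Gamma$ with $\vdash\Jdue\gamma_1\wedge\dots\wedge\Jdue\gamma_n\to\Jdue\psi$ (the empty conjunction giving $\vdash\Jdue\psi$, i.e. $\vdash\psi$, when no premises are used). The base cases are axioms of $\B^{\square}$ and members of $\Gamma$: for an axiom $\psi$ we have $\vdash\psi$, hence $\vdash\Jdue\psi$ by fact (3), so the empty-premise conjunction works; for $\psi=\gamma\in\Gamma$ we simply take the single premise $\gamma$ and note $\vdash\Jdue\gamma\to\Jdue\gamma$. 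The inductive step must handle each deductive rule of $\B^{\square}$, namely ($\rho$-B13), (BAlg3), and (N). Since ($\rho$-B13) and (BAlg3) are already present in the base logic $\B$, for these rules the inductive step is essentially identical to the corresponding step in the proof of Theorem \ref{th: deduzione per Be}, manipulating the collected $\Jdue$-premises using the algebraizability facts and the properties of $\Jdue$ recorded in Lemma \ref{lemma: fatti basilari logica B}.

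The main obstacle, and the only place where the modal structure really enters, is the necessitation rule (N). Here the crucial observation is that (N) has the \emph{empty} premise set: it applies only to theorems, producing $\vdash\square\psi$ from $\vdash\psi$. Consequently, when $\square\psi$ is obtained by (N), the induction hypothesis must give $\vdash\Jdue\psi$ with \emph{no} premises drawn from $\Gamma$ (the empty conjunction). From $\vdash\Jdue\psi$ I recover $\vdash\psi$ by fact (3), apply (N) to get $\vdash\square\psi$, and then apply fact (3) once more to obtain $\vdash\Jdue\square\psi$, which is exactly the required conclusion with an empty set of premises. The key point to verify carefully is that (N) never introduces undischarged hypotheses from $\Gamma$, so that the conjunction on the left of the arrow stays empty precisely when it must; this is what prevents the usual failure of the deduction theorem in global modal logics and makes the local formulation go through. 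Once this case is settled, collecting the finitely many premises accumulated across the (finite) derivation and conjoining their $\Jdue$-translations, together with the transitivity-of-implication manipulations on external formulas, completes the argument.
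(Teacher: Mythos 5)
Your proposal is correct and takes essentially the same route as the paper: induction on the length of the derivation, with the only genuinely modal case being the rule (N), which is handled by observing that necessitation applies only to theorems, so $\vdash\psi$ yields $\vdash\Box\psi$ and hence $\vdash\Jdue\Box\psi$ via Lemma \ref{lemma: fatti basilari logica B}. The only cosmetic differences are that the paper then weakens $\vdash\Jdue\Box\psi$ by the conjunction $\Jdue\gamma_1\wedge\dots\wedge\Jdue\gamma_n$ supplied by the induction hypothesis (using classical logic on external formulas and (MP)) rather than keeping the empty conjunction as you do, and that your detour through the induction hypothesis to recover $\vdash\psi$ is redundant but harmless, since $\vdash\psi$ is already the side condition of (N).
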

\begin{proof}
	The right to left direction is obvious. The other direction is proved by induction on the length of the derivation of $\varphi$ from $\Gamma$. We just show the inductive case of the rule (N). Let $\varphi=\Box\psi$, for some $\psi\in Fm$, and $\Gamma\vdash\Box\psi$, and the last deduction rule applied is (N), hence it holds $\vdash\psi$. By the latter fact, we have $\vdash\Box\psi$, hence $\vdash\Jdue\Box\psi$ (by Lemma \ref{lemma: fatti basilari logica B}). By induction hypothesis, there exists some formulas $\gamma_1,\dots,\gamma_n\in \Gamma$ such that $\vdash\Jdue\gamma_1\wedge\dots\wedge\Jdue\gamma_n\to \Jdue\psi$. Since the axioms (and rule) of classical logic hold for external formulas (Lemma \ref{lemma: fatti basilari logica B}), we have $\vdash\Jdue\Box\psi\to(\Jdue\gamma_1\wedge\dots\wedge\Jdue\gamma_n\to \Jdue\Box\psi)$, hence, by (MP), $\vdash\Jdue\gamma_1\wedge\dots\wedge\Jdue\gamma_n\to \Jdue\Box\psi$.
\end{proof}

\subsection{Semantics}

The intended semantics of this modal logic consists of a relational (Kripke-style) structure where formulas, in each world, are evaluated into $\WKt$ (this has been already implemented for instance in \cite{Ignoranzasevera}). We introduce these structures according to the current terminology adopted in many-valued modal logics. 

\begin{definition}\label{def: Kripke model}
	A \emph{weak three-valued Kripke model} $\mathcal{M}$ is a structure $\pair{W,R, v}$ such that: 
	\begin{enumerate}
		\item $W$ is a non-empty set (of possible worlds); 
		\item $R$ is a binary relation over $W$ ($R\subseteq W\times W$);
		\item $v$ is a map, called \emph{valuation}, assigning to each world and each variable, an element in $\WKt$ ($v\colon W\times \Fm\to\WKt$). 
	\end{enumerate}
\end{definition}

Non-modal formulas will be interpreted as in $\B$, i.e. we assume that $v$ is a homomorphism, in its second component, with respect to $\neg,\vee,\Jdue,1,0$. The reduct $\mathcal{F}=\pair{W,R}$ of a model $\mathcal{M}$ is called \emph{frame}. \\

\noindent
\textbf{Notation:} for ordered pairs of related elements, we equivalently write $(w,s)\in R$ or $wRs$. \\

The semantical interpretation of the modality $\Box$ is what characterize a special family of weak three-valued Kripke models:

\begin{definition}\label{def: semantica Box}
	A \emph{Bochvar-Kripke model} is a weak three-valued Kripke model $\pair{W,R, v}$ such that $v$ evaluates formulas of the form $\Box\varphi$ according to:
	\begin{itemize}
		\item[(1)] $v(w,\Box\varphi)= 1$ iff $v(w,\varphi)\neq\ant$ and $v(s,\varphi)= 1$ for every $s\in W$ such that $wRs$. 
		\item[(0)] $v(w,\Box\varphi)= 0$ iff $v(w,\varphi)\neq\ant$ and there exists $s\in W$ such that $wRs$ and $v(s,\varphi)\neq 1$. 
		\item[($\ant$)] $v(w,\Box\varphi)= \ant$ iff $v(w,\varphi)=\ant$. 
	\end{itemize}
\end{definition}

To simplify, within this Section by \emph{model} we intend \emph{Bochvar-Kripke model}.

\begin{remark}\label{rem: semantica Diamond}
	Observe that, in every model $\pair{W,R, v}$ with $w\in W$, it holds that $v(w,\diamond\varphi)= 1$ iff $v(w,\varphi)\neq\ant$ and there exists $s\in W$ such that $wRs$ and $v(s,\varphi)\neq 0$, while $v(w,\diamond\varphi) = 0$ iff $v(w,\varphi)\neq\ant$ and $v(s,\varphi)= 0$ for every $s\in W$ such that $wRs$.
\end{remark}

As usual in modal logic, one can opt to study the local or the global consequence relation related to a class of frame. In this paper, we will always deal with the former. Accordingly, we denote by $\models^{l}_{\B^{\square}}$ the \emph{local} modal Bochvar external logic on the class of all frames obtained by taking $\{ 1 \}$ as designated value, that is:

\begin{definition}\label{def: conseguenza logica Bochvar}
        $\Gamma \vDash^{l}_{\B^{\square}} \varphi$ iff for all models $\pair{W,R, v}$ and all $w\in W$, if $v(w,\gamma)=1, \forall\gamma\in\Gamma$, then $v(w,\varphi)=1$.
\end{definition}

In the following we omit the subscript and write simply $\vDash$, instead of $\vDash^{l}$. The following semantic notions are standard.

\begin{definition}\label{def: soddisfacibilitÃ  e validitÃ   Bochvar}
        A formula $\varphi$ is satisfied (valid) in a model $\pair{W,R, v}$ if $v(w,\varphi)=1$, for some (all) $w\in W$. A formula $\varphi$ is valid in a frame $\mathcal{F}$ (notation $\mathcal{F}\vDash \varphi$) if it is valid in $\pair{\mathcal{F},v}$, for all valuations $v$. A formula $\varphi$ is valid in a class of frames $\mathcal{K}$ (notation $\mathcal{K}\vDash \varphi$) if it is valid in every frame $\mathcal{F}\in\mathcal{K}$.
\end{definition}

\subsection{Completeness and decidability}

\begin{definition}\label{def: insieme consistente}
	A set $\Gamma\subset Fm$ is \emph{consistent} if $\Gamma\not\vdash\varphi$, for some $\varphi\in Fm$. It is inconsistent if it is not consistent.
\end{definition}

\begin{remark}\label{rem: caratterizzazione equivalente inconsistenza}
	Equivalently, a set $\Gamma\subset Fm$ is consistent if there is no formula $\varphi\in Fm$, such that $\Gamma\vdash\Jdue\varphi$ and $\Gamma\vdash\neg\Jdue\varphi$. Observe that this is equivalent to say that $\Gamma\not\vdash 0$.  
\end{remark}

\begin{definition}\label{def: insieme massimale consistente}
	A consistent set $\Gamma$ is \emph{maximally consistent} (or \emph{complete}) whenever $\Gamma\subset \Gamma'$ implies that $\Gamma'$ is inconsistent. Equivalently, $\Gamma$ is maximally consistent iff, for every $\varphi\in Fm$ exactly one of the following holds: 
	\begin{itemize}
		\item[i)] $\varphi\in\Gamma$; 
		\item[ii)] $\neg\varphi\in\Gamma$;
		\item[iii)] $\Juno\varphi\in\Gamma$;
	\end{itemize}
\end{definition}

\begin{definition}\label{def: formula sensata in un mondo}
	A formula $\varphi$ is \emph{meaningful} in a maximally consistent set $w$ if $x\in w$ or $\neg x\in w$, for every open variable $x\in \varphi$.
\end{definition}

Observe that the definition of meaningful formulas implies, semantically, that such formulas are those evaluated, in a state, into the two-elements Boolean algebra $\mathbf{B}_2$ only.
The definition of meaningful formula obviously apply to variables as well. 

\begin{lemma}\label{lemma: proprieta mondi massimali consistenti - 1}
	Let $w$ be a maximally consistent set of formulas, then:
	\begin{enumerate}
		\item if $\neg\varphi\not\in w$ and all the variables occurring in $\varphi$ are meaningful in $w$, then $\varphi\in w$; 
		\item if all the variables of $\varphi$ are covered, then $\varphi$ is meaningful in $w$;
		\item if $\vdash\varphi$ then $\varphi\in w$.
	\end{enumerate}
\end{lemma}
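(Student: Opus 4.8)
The plan is to prove the three items in order, since each builds naturally on the structure of maximally consistent sets established in Definition \ref{def: insieme massimale consistente}. For item (1), I would reason by contraposition from the trichotomy characterizing maximal consistency. Suppose $\neg\varphi\notin w$ and all variables of $\varphi$ are meaningful in $w$; I want $\varphi\in w$. Since $w$ is maximally consistent, for $\varphi$ itself exactly one of $\varphi\in w$, $\neg\varphi\in w$, or $\Juno\varphi\in w$ holds. As $\neg\varphi\notin w$ by hypothesis, it suffices to rule out $\Juno\varphi\in w$. The key point is that when every variable of $\varphi$ is meaningful, every open variable $x$ satisfies $x\in w$ or $\neg x\in w$, so $\varphi$ is evaluated (relative to $w$) into the Boolean part $\{0,1\}$ of $\WKt$, forcing $\Juno\varphi$ to behave like $0$; more precisely I would show $w\vdash\Juno\varphi\to 0$ using the fact (cf.\ Lemma \ref{lemma: fatti basilari logica B}, items on $\Juno$) that meaningfulness propagates through the connectives so that $\Juno\varphi$ cannot be consistently added. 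Hence $\Juno\varphi\in w$ would contradict consistency, leaving $\varphi\in w$ as the only option.

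For item (2), the argument is essentially definitional combined with item (2)'s hypothesis. If all variables of $\varphi$ are covered, then $\varphi$ has \emph{no} open variables at all (Definition \ref{def: variabili aperte/coperte} and Definition \ref{def: formula esterna} make $\varphi$ external). The condition for $\varphi$ to be meaningful in $w$ (Definition \ref{def: formula sensata in un mondo}) quantifies only over open variables of $\varphi$; since there are none, the condition is vacuously satisfied, and $\varphi$ is meaningful in $w$. I would state this explicitly to avoid the appearance of circularity: meaningfulness is a universally quantified statement over the open variables, and an empty range makes it trivially true.

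For item (3), suppose $\vdash\varphi$, i.e.\ $\varphi$ is a theorem. Since $\emptyset\vdash\varphi$ and derivability is monotone, $w\vdash\varphi$. I then want to conclude $\varphi\in w$ from maximal consistency. The trichotomy gives $\varphi\in w$, $\neg\varphi\in w$, or $\Juno\varphi\in w$. If $\neg\varphi\in w$, then since $w\vdash\varphi$ we would get $w\vdash\varphi\land\neg\varphi$, and via the external connectives ($\Jdue\varphi$ together with $\Jzero\varphi=\Jdue\neg\varphi$, using Lemma \ref{lemma: fatti basilari logica B}(7) $\vdash\Jzero\varphi\land\Jdue\varphi\to 0$) this yields $w\vdash 0$, contradicting consistency (Remark \ref{rem: caratterizzazione equivalente inconsistenza}). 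Similarly, if $\Juno\varphi\in w$, then from $w\vdash\varphi$ and $\vdash\varphi\to\Jdue\varphi$ (Lemma \ref{lemma: fatti basilari logica B}(3)) we obtain $w\vdash\Jdue\varphi$, while $\Juno\varphi=\neg(\Jdue\varphi\lor\Jdue\neg\varphi)$ forces $w\vdash\neg\Jdue\varphi$, again giving $w\vdash 0$ by Remark \ref{rem: caratterizzazione equivalente inconsistenza}. Both alternatives being inconsistent, $\varphi\in w$.

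\textbf{Main obstacle.} I expect the delicate point to be item (1): the claim that meaningfulness of all variables forces $\varphi$ into the Boolean fragment needs a genuine inductive argument showing that $\Juno\varphi$ is derivably $0$ under the hypothesis $\{x,\neg x : x \text{ open in }\varphi\}$-information carried by $w$. Whereas covered variables are handled cleanly by Lemma \ref{lemma: fatti basilari logica B}(10) ($\vdash\Juno\alpha\to 0$ for external $\alpha$), open-but-meaningful variables require tracking how $\Juno$ distributes over $\neg,\lor,\Jdue$ and invoking $w$'s commitment to $x$ or $\neg x$ for each open $x$. The cleanest route is probably an induction on the structure of $\varphi$, using the $J$-table identities (A17), (A18), and Lemma \ref{lemma: fatti basilari logica B}(9) to push $\Juno$ down to the variables, where meaningfulness disposes of it.
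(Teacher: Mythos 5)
Your proof of item (3) is essentially the paper's own: the paper likewise supposes $\vdash\varphi$ and, by contradiction, that $\neg\varphi\in w$ or $\Juno\varphi\in w$, derives $w\vdash\Jdue\varphi$ and $w\vdash\Jzero\varphi$, hence $w\vdash\neg\Jdue\varphi$ via Lemma \ref{lemma: fatti basilari logica B}(6), and invokes Remark \ref{rem: caratterizzazione equivalente inconsistenza}; your variant, deriving $w\vdash 0$ via item (7) ($\vdash\Jzero\varphi\land\Jdue\varphi\to 0$), is an immaterial difference, and your explicit appeal to monotonicity to get $w\vdash\varphi$ is in fact cleaner than the paper's phrasing. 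Where you genuinely diverge is on items (1) and (2): the paper does not prove them at all, deferring to Segerberg's Lemma 4.6, whereas you supply arguments. Your vacuity argument for (2) is exactly right --- a formula with all variables covered has no open variables, so the defining condition of meaningfulness is trivially satisfied. For (1), your plan of ruling out $\Juno\varphi\in w$ by an induction on $\varphi$ that pushes $\Juno$ down to the variables (equivalently, establishing $w\vdash+\varphi$ from $w\vdash+x$ for each variable, using (A17), (A18) and the behaviour of $\Juno$ under $\neg$ and $\Jdue$) is the correct and standard route; it is in substance what the cited Segerberg lemma does, so your proposal buys self-containedness at the cost of the induction you rightly flag as the main labor. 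One citation slip worth correcting: in item (3) you invoke $\vdash\varphi\to\Jdue\varphi$, but this implication is \emph{not} a theorem of $\B$ (it fails when $\varphi$ takes value $\ant$, since $\neg\varphi\lor\Jdue\varphi$ then evaluates to $\ant$, which is undesignated in $\B$); Lemma \ref{lemma: fatti basilari logica B}(3) gives only the rule form $\varphi\sineq\Jdue\varphi$, which is all your argument actually uses, so the proof survives with the corrected citation.
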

\begin{proof}
	We just show (3) (as the other claims can be found also in \cite[Lemma 4.6]{Segerberg67}). Suppose that $\vdash\varphi$ and, by contradiction, that either $\neg\varphi\in w$ or $\Juno\varphi\in w$. Let us assume that $\neg\varphi\in w$. From $\vdash\varphi$ it follows $\varphi\in w$. By Lemma \ref{lemma: fatti basilari logica B} we have $w\vdash\Jdue\varphi$ and $w\vdash\Jzero\varphi$. Applying the same lemma, the latter yields $w\vdash\neg\Jdue\varphi$, in contradiction with the assumption that $w$ is (maximally) consistent (see Remark \ref{rem: caratterizzazione equivalente inconsistenza}). One can reason similarly for the case $\Juno\varphi\in w$.
\end{proof}

\begin{lemma}\cite[Lemma 4.7]{Segerberg67}\label{lemma: sulle formule sensate nei mondi massimali}
	Let $w$ be a maximally consistent set of formulas, t.f.a.e.
	\begin{enumerate}
		\item $\varphi$ is meaningful in $w$; 
		\item either $\varphi\in w$ or $\neg\varphi\in w$; 
		\item $+\varphi\in w$; 
		\item $\Box\varphi$ is meaningful in $w$;
		\item $\diamond\varphi$ is meaningful in $w$.
	\end{enumerate}
\end{lemma}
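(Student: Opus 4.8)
The plan is to funnel all five conditions through the single assertion ``$+\varphi\in w$'' (condition (3)), which is the most pliable syntactically, and to treat the modal clauses (4)--(5) as corollaries. The backbone is the equivalence $(2)\Leftrightarrow(3)$, which I would settle first, directly from maximal consistency. The key observation is that $+\psi=\neg\Juno\psi$ is an \emph{external} formula for every $\psi$ (all its variable occurrences sit under $\Juno$, hence under the scope of a $\Ji$), so by Lemma \ref{lemma: fatti basilari logica B}(10) the option $\Juno(+\psi)\in w$ is excluded and the trichotomy of Definition \ref{def: insieme massimale consistente} collapses to a genuine dichotomy on external formulas. For $(3)\Rightarrow(2)$: if $+\varphi=\neg\Juno\varphi\in w$ while neither $\varphi\in w$ nor $\neg\varphi\in w$, the trichotomy forces $\Juno\varphi\in w$, so $w$ proves both $\Juno\varphi$ and $\neg\Juno\varphi$; since $\neg(\Juno\varphi\land\neg\Juno\varphi)$ is an external classical theorem, Lemma \ref{lemma: fatti basilari logica B}(4) gives $w\vdash 0$, contradicting consistency (Remark \ref{rem: caratterizzazione equivalente inconsistenza}). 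For $(2)\Rightarrow(3)$: if $\varphi\in w$ or $\neg\varphi\in w$ then $\Juno\varphi\notin w$ by trichotomy, and applying the external dichotomy to $\Juno\varphi$ yields $\neg\Juno\varphi=+\varphi\in w$.

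Next I would prove $(1)\Leftrightarrow(3)$ by induction on the complexity of $\varphi$, reading $+$ as a ``definedness predicate''. The guiding identity is $\vdash +\varphi\leftrightarrow\bigwedge\{+x : x\ \text{open in}\ \varphi\}$, propagated through the connectives by: $\vdash+0$, $\vdash+1$ and $\vdash+\Ji\chi$ (all external, via Lemma \ref{lemma: fatti basilari logica B}(4),(10)), so that covered subformulas contribute nothing; $\vdash+\neg\chi\leftrightarrow+\chi$ (from $\vdash\Juno\chi\leftrightarrow\Juno\neg\chi$, Lemma \ref{lemma: fatti basilari logica B}(9)); $\vdash+(\chi\lor\xi)\leftrightarrow+\chi\land+\xi$; and, for the modal step, $\vdash+\Box\chi\leftrightarrow+\chi$, which is exactly axiom (B2). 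Since by Definition \ref{def: variabili aperte/coperte} the open variables of $\neg\chi$, of $\chi\lor\xi$, and of $\Box\chi$ are, respectively, those of $\chi$, of $\chi$ together with those of $\xi$, and of $\chi$, while $\Ji\chi,0,1$ have none, the induction matches ``every open variable of $\varphi$ is meaningful'' (condition (1), with base case $+x\in w\Leftrightarrow x$ meaningful, i.e.\ $(2)\Leftrightarrow(3)$ for a variable) against ``$+\varphi\in w$'' (condition (3)), using that $w$ contains all theorems (Lemma \ref{lemma: proprieta mondi massimali consistenti - 1}(3)).

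Finally, $(1)\Leftrightarrow(4)$ and $(1)\Leftrightarrow(5)$ are essentially immediate: by Definition \ref{def: variabili aperte/coperte} the operators $\square$ and $\neg$ never place a variable under the scope of some $\Ji$, so $\square\varphi$ and $\diamond\varphi=\neg\square\neg\varphi$ have exactly the same open variables as $\varphi$; hence ``$\square\varphi$ meaningful'', ``$\diamond\varphi$ meaningful'' and ``$\varphi$ meaningful'' literally coincide with condition (1). In the spirit of the axiomatization one may instead apply the already established $(1)\Leftrightarrow(3)$ to $\square\varphi$ and to $\diamond\varphi$ and transport membership using $\vdash+\square\varphi\leftrightarrow+\varphi$ (axiom (B2)) and $\vdash+\diamond\varphi\leftrightarrow+\varphi$ (from (B2) together with the $\neg$-invariance of $+$). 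Either route closes the cycle $(1)\Leftrightarrow(2)\Leftrightarrow(3)\Leftrightarrow(4)\Leftrightarrow(5)$.

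The hard part will be the single distribution law $\vdash+(\chi\lor\xi)\leftrightarrow+\chi\land+\xi$, equivalently $\vdash\Juno(\chi\lor\xi)\leftrightarrow\Juno\chi\lor\Juno\xi$, in the inductive step. Unfolding $+$ through $\vdash+\psi\leftrightarrow\Jdue\psi\lor\Jzero\psi$ (derivable from the $\Jdue,\Jzero,\Juno$-axioms such as ($\rho$-B10)) reduces it to a purely classical computation among the external formulas $\Jdue,\Jzero$ of $\chi,\xi$ and $\chi\lor\xi$, fed by the distribution axiom ($\rho$-B12) for $\Jdue(\chi\lor\xi)$ and the dual law $\vdash\Jzero(\chi\lor\xi)\leftrightarrow\Jzero\chi\land\Jzero\xi$. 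This is routine, but it is the only genuinely computational point, and care is needed to invoke classical reasoning only on external formulas (Lemma \ref{lemma: fatti basilari logica B}(4)).
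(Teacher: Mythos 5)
Your proposal is correct, but note that the paper does not actually prove this lemma at all: it simply cites Segerberg's Lemma 4.7, so there is no in-paper proof to match. What you have produced is a self-contained reconstruction adapted to the paper's own apparatus, and the adaptation is sound: the pivot $(2)\Leftrightarrow(3)$ via the collapse of the trichotomy of Definition \ref{def: insieme massimale consistente} to a dichotomy on external formulas (using $\vdash\Juno\alpha\to 0$, Lemma \ref{lemma: fatti basilari logica B}(10), and Remark \ref{rem: caratterizzazione equivalente inconsistenza}) is exactly the right use of the new $\rho$-B machinery, and your induction for $(1)\Leftrightarrow(3)$ correctly isolates the only modal ingredient, axiom (B2), which is the analogue of what Segerberg's original system needed. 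You are also right that under this paper's Definition \ref{def: formula sensata in un mondo} the clauses (4) and (5) are nearly vacuous, since $\Box$ and $\neg$ do not bind variables under any $\Ji$, so $\Box\varphi$, $\diamond\varphi$ and $\varphi$ have literally the same open variables; your alternative route through $+\Box\varphi\leftrightarrow+\varphi$ is the one that would survive if the definition of meaningfulness were semantic rather than variable-based. Two small points of care: first, the dual law $\vdash\Jzero(\chi\lor\xi)\leftrightarrow\Jzero\chi\land\Jzero\xi$ is not an axiom of the $\rho$-B calculus and a direct derivation from ($\rho$-B6) tends to run in circles through the defined $\land$; the clean justification is the paper's stated equivalence with Finn--Grigolia's calculus, where it is axiom (A17) -- worth saying explicitly, since your argument must stay syntactic (it feeds the Truth Lemma, so appealing to completeness would be circular, a trap you correctly avoid everywhere else). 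Second, your guiding identity $\vdash+\varphi\leftrightarrow\bigwedge\{+x : x\ \text{open in}\ \varphi\}$ needs the empty-conjunction convention for external $\varphi$, which your base clauses $\vdash+\Ji\chi$, $\vdash+0$, $\vdash+1$ do supply; just make that convention explicit in a final write-up.
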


\begin{lemma}\cite[Lemma 4.8]{Segerberg67}\label{lemma: proprieta mondi massimali consistenti - 2}
	For every maximally consistent set $w$ the following hold:  
	\begin{enumerate}
		\item If $\varphi\to\psi\in w$ and $\varphi\in w$ then $\psi\in w$;
		\vspace{3pt}
		\item $\varphi\land\psi\in w$ if and only if $\varphi,\psi\in w$;
		\vspace{3pt}
		\item $\varphi\lor\psi\in w$ if and only if $\varphi\in w$ or $\psi\in w$;
		\vspace{3pt}
		\item $\varphi\in w$ if and only if $\Jdue\varphi\in w$; 
		\vspace{3pt}
		\item $\Jdue\varphi\in w$ if and only if $\neg\Jdue\varphi\not\in w$.
	\end{enumerate}
\end{lemma}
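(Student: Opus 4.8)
The backbone of all five items is a closure property that I would isolate first: \emph{for a maximally consistent $w$ and any $\varphi\in Fm$, one has $w\vdash\varphi$ if and only if $\varphi\in w$} (call this $(\ast)$). Right-to-left is just reflexivity of $\vdash$. For the converse, suppose $w\vdash\varphi$ but $\varphi\notin w$; by the trichotomy of Definition~\ref{def: insieme massimale consistente}, either $\neg\varphi\in w$ or $\Juno\varphi\in w$. In the first case Lemma~\ref{lemma: fatti basilari logica B}(3),(5),(6) give both $w\vdash\Jdue\varphi$ (from $\varphi\sineq\Jdue\varphi$) and $w\vdash\Jzero\varphi\vdash\neg\Jdue\varphi$, contradicting consistency via Remark~\ref{rem: caratterizzazione equivalente inconsistenza}. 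In the second case, unfolding $\Juno\varphi=\neg(\Jdue\varphi\lor\Jdue\neg\varphi)$ and applying de Morgan (($\rho$-B6)) to the external subformulas yields $w\vdash\neg\Jdue\varphi$, again clashing with $w\vdash\Jdue\varphi$. Hence $\varphi\in w$, establishing $(\ast)$.

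Granting $(\ast)$, items (1), (2), (4), (5) are short. For (1): from $\varphi\to\psi,\varphi\in w$ we get $w\vdash\psi$ by (MP) (Remark~\ref{rem: MP per Bochvar Modale}), so $\psi\in w$. For (4): $\varphi\sineq\Jdue\varphi$ (Lemma~\ref{lemma: fatti basilari logica B}(3)) gives $w\vdash\varphi$ iff $w\vdash\Jdue\varphi$, which by $(\ast)$ reads $\varphi\in w$ iff $\Jdue\varphi\in w$. For (2): the entailments $\varphi\land\psi\vdash\varphi$, $\varphi\land\psi\vdash\psi$ and $\varphi,\psi\vdash\varphi\land\psi$ all hold in $\B$, each reducing via $\chi\sineq\Jdue\chi$ and $\Jdue(\varphi\land\psi)\sineq\Jdue\varphi\land\Jdue\psi$ to classical reasoning on external formulas, so by $(\ast)$ we get $\varphi\land\psi\in w$ iff $\varphi,\psi\in w$. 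For (5): $\Jdue\varphi$ is external, hence meaningful in $w$ (Lemma~\ref{lemma: proprieta mondi massimali consistenti - 1}(2)), so Lemma~\ref{lemma: sulle formule sensate nei mondi massimali} gives $\Jdue\varphi\in w$ or $\neg\Jdue\varphi\in w$; moreover $\Juno\Jdue\varphi\notin w$, for $\vdash\Juno\Jdue\varphi\to 0$ (Lemma~\ref{lemma: fatti basilari logica B}(10)) would otherwise force $w\vdash 0$. The trichotomy thus collapses to the exclusive dichotomy $\Jdue\varphi\in w$ iff $\neg\Jdue\varphi\notin w$.

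Item (3) is where the genuine work — and the main obstacle — lies, precisely because of the infectiousness of $\lor$. The left-to-right direction I would prove by contraposition on the trichotomy: if $\varphi\notin w$ and $\psi\notin w$, then each is either negated or marked $\Juno$ in $w$, and the $\B$-derivable entailments $\neg\varphi,\neg\psi\vdash\neg(\varphi\lor\psi)$ (de Morgan) together with $\Juno\varphi\vdash\Juno(\varphi\lor\psi)$ and $\Juno\psi\vdash\Juno(\varphi\lor\psi)$ (infectiousness, obtained from ($\rho$-B12) and $\Jzero(\varphi\lor\psi)\sineq\Jzero\varphi\land\Jzero\psi$) make every case yield $\varphi\lor\psi\notin w$ by $(\ast)$ and exclusivity. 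The right-to-left direction is the delicate point: taken literally it can fail, since if $\varphi\in w$ while $\psi$ is meaningless ($\Juno\psi\in w$) then $\Juno(\varphi\lor\psi)\in w$ and so $\varphi\lor\psi\notin w$. The equivalence must therefore be read — as in Segerberg's canonical-model use — under the standing meaningfulness of the formulas involved: assuming $\varphi\lor\psi$ is meaningful in $w$, Lemma~\ref{lemma: sulle formule sensate nei mondi massimali} gives $\varphi\lor\psi\in w$ or $\neg(\varphi\lor\psi)\in w$, and the latter forces $\neg\varphi\in w$ (via de Morgan and conjunction elimination), contradicting $\varphi\in w$; hence $\varphi\lor\psi\in w$. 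I expect this reconciliation of the right-to-left direction with the bare statement — pinning down the meaningfulness hypothesis carried implicitly by the construction — to be the one truly delicate step, the other items being routine once $(\ast)$ is available.
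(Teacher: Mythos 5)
The paper offers no proof of this lemma at all: it is imported verbatim from Segerberg \cite[Lemma 4.8]{Segerberg67}, so there is no internal argument to compare yours against line by line. On the merits, your proof is correct where the statement is correct. The closure property $(\ast)$ ($w\vdash\varphi$ iff $\varphi\in w$) is exactly the right backbone; its proof is the same two-case argument the paper itself uses for Lemma~\ref{lemma: proprieta mondi massimali consistenti - 1}(3), only with $w\vdash\varphi$ in place of $\vdash\varphi$, and your derivations for items (1), (2), (4), (5) from $(\ast)$, the trichotomy of Definition~\ref{def: insieme massimale consistente}, and Lemmas~\ref{lemma: fatti basilari logica B} and~\ref{lemma: sulle formule sensate nei mondi massimali} are all sound.

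Your diagnosis of item (3) is not merely a stylistic caveat: the right-to-left direction is genuinely false as stated, and you are right to flag it. Concretely, $\{x,\Juno y\}$ is consistent (witnessed by any $h\colon\Fm\to\WKt$ with $h(x)=1$, $h(y)=\ant$), so by Lemma~\ref{lemma: Lindenbaum} it extends to a maximally consistent $w$; since $\Juno y\vdash_{\B}\Juno(x\lor y)$ by infectiousness, $(\ast)$ gives $\Juno(x\lor y)\in w$, whence $x\lor y\notin w$ although $x\in w$. The mismatch is inherited from the citation: Segerberg's maximal sets live in the $\Ho$-based (non-falsity-preserving) setting with designated values $\{1,\ant\}$, where (3) holds outright and it is instead (2) that would need care; transplanted to the $\B$-setting, where membership in a maximally consistent set tracks the value $1$ (cf.\ the canonical valuation of Definition~\ref{def: modello canonico} and Lemma~\ref{lemma: truth lemma}), the proviso you add is required. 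Your meaningfulness-qualified reading --- $\varphi\lor\psi\in w$ iff $\varphi\lor\psi$ is meaningful in $w$ and ($\varphi\in w$ or $\psi\in w$) --- is precisely the form needed in the suppressed disjunction case of the Truth Lemma's induction, so your repaired statement is the one the paper actually uses. The only point worth making explicit in your write-up is that meaningfulness of $\varphi\lor\psi$ in $w$ coincides with meaningfulness of both $\varphi$ and $\psi$ (the open variables of the disjunction are the union of those of the disjuncts), which is what makes your appeal to Lemma~\ref{lemma: sulle formule sensate nei mondi massimali} close the argument.
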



\begin{lemma}\label{lemma: Insieme inconsistente}
	Let $\Gamma$ be a consistent set of formulas. $\Gamma\cup\{\varphi\}$ is inconsistent if and only if $\Gamma\vdash\neg\varphi$ or $\Gamma\vdash\Juno\varphi$.
\end{lemma}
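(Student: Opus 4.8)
The plan is to treat the two implications separately, relying on the characterization of inconsistency as $\Gamma\vdash 0$ (Remark~\ref{rem: caratterizzazione equivalente inconsistenza}), on the Deduction Theorem (Theorem~\ref{th: teo. deduzione Bochvar modale}), and on the basic facts collected in Lemma~\ref{lemma: fatti basilari logica B}. Throughout I will use freely that $\Jdue\psi$, $\Jzero\psi$ and $\Juno\psi$ are external, so that classical manipulations are legitimate on them.

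For the ``if'' direction, first suppose $\Gamma\vdash\neg\varphi$. Then $\Gamma\cup\{\varphi\}$ proves both $\varphi$ and $\neg\varphi$; converting these via $\varphi\sineq\Jdue\varphi$ and $\neg\varphi\sineq\Jzero\varphi$ (items (3) and (5) of Lemma~\ref{lemma: fatti basilari logica B}) I obtain $\Gamma\cup\{\varphi\}\vdash\Jdue\varphi$ and $\Gamma\cup\{\varphi\}\vdash\Jzero\varphi$, and then $\vdash\Jzero\varphi\to\neg\Jdue\varphi$ (item (6)) yields $\Gamma\cup\{\varphi\}\vdash\neg\Jdue\varphi$; the simultaneous derivability of $\Jdue\varphi$ and $\neg\Jdue\varphi$ makes the set inconsistent by Remark~\ref{rem: caratterizzazione equivalente inconsistenza}. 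If instead $\Gamma\vdash\Juno\varphi$, then from $\varphi$ I again get $\Jdue\varphi$, while $\Juno\varphi=\neg(\Jdue\varphi\lor\Jdue\neg\varphi)$ gives $\neg\Jdue\varphi$ by classical reasoning on the external formulas involved, and the same contradiction appears.

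For the ``only if'' direction, assume $\Gamma\cup\{\varphi\}$ is inconsistent, i.e. $\Gamma\cup\{\varphi\}\vdash 0$. The Deduction Theorem produces $\gamma_1,\dots,\gamma_n\in\Gamma\cup\{\varphi\}$ with $\vdash\Jdue\gamma_1\wedge\dots\wedge\Jdue\gamma_n\to\Jdue 0$; since $\Gamma$ is itself consistent, $\varphi$ must occur among the $\gamma_i$, so after reindexing I may write $\vdash\Jdue\delta_1\wedge\dots\wedge\Jdue\delta_m\wedge\Jdue\varphi\to\Jdue 0$ with every $\delta_j\in\Gamma$. Using $\Jdue 0\leftrightarrow 0$ (item (2), as $0$ is external) and classical logic on the external antecedent, this rearranges to $\vdash\Jdue\delta_1\wedge\dots\wedge\Jdue\delta_m\to\neg\Jdue\varphi$, which (noting that $\neg\Jdue\varphi$ is external, so $\Jdue\neg\Jdue\varphi\leftrightarrow\neg\Jdue\varphi$) means $\Gamma\vdash\neg\Jdue\varphi$ by another application of the Deduction Theorem. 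Item (8), $\vdash\neg\Jdue\varphi\to\Juno\varphi\lor\Jzero\varphi$, together with Modus Ponens (Remark~\ref{rem: MP per Bochvar Modale}) then gives $\Gamma\vdash\Jzero\varphi\lor\Juno\varphi$, and $\Jzero\varphi\sineq\neg\varphi$ (item (5)) recasts this as the disjunctive conclusion $\Gamma\vdash\neg\varphi\lor\Juno\varphi$.

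The hard part will be the final move: passing from the single derivation $\Gamma\vdash\neg\varphi\lor\Juno\varphi$ (equivalently $\Gamma\vdash\neg\Jdue\varphi$) to the meta-level alternative $\Gamma\vdash\neg\varphi$ \emph{or} $\Gamma\vdash\Juno\varphi$. This is a disjunction-property claim for the external formulas $\Jzero\varphi$ and $\Juno\varphi$, and it is \emph{not} justified by classical logic alone, since in general $\Gamma\vdash\alpha\lor\beta$ does not entail $\Gamma\vdash\alpha$ or $\Gamma\vdash\beta$. The argument must therefore either exploit the special, mutually exclusive and jointly exhaustive relationship between $\Jdue\varphi$, $\Jzero\varphi$ and $\Juno\varphi$, or be carried out relative to a maximally consistent $\Gamma$, where the trichotomy of Definition~\ref{def: insieme massimale consistente} makes the split immediate. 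I expect the clean intermediate statement $\Gamma\vdash\neg\Jdue\varphi$ derived above to be the real engine of the lemma, with the precise phrasing of the disjunctive conclusion being the delicate point that demands this extra care rather than a naive splitting of the disjunction.
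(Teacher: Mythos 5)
Your ``if'' direction and the first half of your ``only if'' direction coincide with the paper's proof: the paper likewise uses the Deduction Theorem together with $\Jdue 0\leftrightarrow 0$ to get $\Gamma\vdash\neg\Jdue\varphi$, and then item (8) of Lemma \ref{lemma: fatti basilari logica B} to get $\Gamma\vdash\Juno\varphi\lor\Jzero\varphi$. (One small slip on your side, though it is never used: you cannot ``recast'' $\Gamma\vdash\Jzero\varphi\lor\Juno\varphi$ as $\Gamma\vdash\neg\varphi\lor\Juno\varphi$; the interderivability $\neg\varphi\sineq\Jzero\varphi$ does not license replacement inside the infectious disjunction, and indeed when $v(\varphi)=\ant$ the first disjunction evaluates to $1$ while the second evaluates to $\ant$.) However, as a proof of the stated lemma your proposal is incomplete: you stop exactly at the passage from the object-level disjunction to the meta-level alternative ``$\Gamma\vdash\neg\varphi$ or $\Gamma\vdash\Juno\varphi$'', and only speculate about how it might be closed.

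The striking point is that the gap you flag is genuine and the paper does not close it either: its proof ends with ``$\Gamma\not\vdash\neg\varphi$ implies $\Gamma\not\vdash\Jzero\varphi$, hence $\Gamma\vdash\Juno\varphi$'', which is precisely the invalid step you ruled out (from $\Gamma\vdash\alpha\lor\beta$ and $\Gamma\not\vdash\alpha$ one cannot infer $\Gamma\vdash\beta$). In fact the left-to-right direction of the lemma as literally stated fails: take $\Gamma=\{\neg\Jdue p\}$ and $\varphi=p$. Then $\Gamma$ is consistent and $\Gamma\cup\{p\}$ derives both $\Jdue p$ and $\neg\Jdue p$, hence is inconsistent; yet by the soundness half of Theorem \ref{th: completezza Bochvar modale} we have $\Gamma\not\vdash\neg p$ (countermodel with $v(w,p)=\ant$) and $\Gamma\not\vdash\Juno p$ (countermodel with $v(w,p)=0$). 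Your intermediate statement $\Gamma\vdash\neg\Jdue\varphi$ (equivalently $\Gamma\vdash\Jzero\varphi\lor\Juno\varphi$) is the correct repaired form of the lemma: its converse holds as well, since from $\Gamma\vdash\neg\Jdue\varphi$ the set $\Gamma\cup\{\varphi\}$ proves both $\Jdue\varphi$ and $\neg\Jdue\varphi$; and it suffices for the lemma's downstream uses, e.g.\ in Lindenbaum's Lemma \ref{lemma: Lindenbaum}, where both $\Gamma_i\cup\{\psi\}$ and $\Gamma_i\cup\{\neg\psi\}$ being inconsistent yields $\Gamma_i\vdash\neg\Jdue\psi$ and $\Gamma_i\vdash\neg\Jdue\neg\psi$, whence $\Gamma_i\vdash\Juno\psi$ by classical reasoning on external formulas, and that is what the construction actually needs. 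So your instinct that the intermediate claim is ``the real engine of the lemma'' was exactly right; the one thing you did not anticipate is that the paper bridges the remaining distance with a non-sequitur rather than with maximal consistency or any special trichotomy argument.
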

\begin{proof}
Let $\Gamma$ be a consistent set of formulas.\\
\noindent
($\Rightarrow$) Let $\Gamma\cup\{\varphi\}$ be inconsistent and $\Gamma\not\vdash\neg\varphi$. By assumption, $\Gamma\cup\{\varphi\}\vdash 0$. By Theorem \ref{th: teo. deduzione Bochvar modale}, there exist formulas $\gamma_{1},\dots,\gamma_{n}\in\Sigma$ such that $\vdash\Jdue\gamma_{1}\wedge\dots\wedge\Jdue\gamma_{n}\wedge\Jdue\varphi\to\Jdue 0$, hence $\vdash\Jdue\gamma_{1}\wedge\dots\wedge\Jdue\gamma_{n}\to\neg\Jdue\varphi$, thus $\Gamma\vdash\neg\Jdue\varphi$. Since $\vdash\neg\Jdue\varphi\to\Juno\varphi\lor\Jzero\varphi$ by Lemma \ref{lemma: fatti basilari logica B}, $\Gamma\vdash\Juno\varphi\lor\Jzero\varphi$. By assumption, $\Gamma\not\vdash\neg\varphi$ which implies $\Gamma\not\vdash\Jzero\varphi$, hence $\Gamma\vdash\Juno\varphi$.\\
($\Leftarrow$) Let $\Gamma\vdash\neg\varphi$ or $\Gamma\vdash\Juno\varphi$. Suppose $\Gamma\vdash\neg\varphi$ is the case, hence $\Gamma\vdash\Jzero\varphi$ (by Lemma \ref{lemma: fatti basilari logica B}). On the other hand, $\Gamma\cup\{\varphi\}\vdash\Jdue\varphi$, hence $\Gamma\cup\{\varphi\}\vdash\Jzero\varphi\land\Jdue\varphi$, and since by Lemma \ref{lemma: fatti basilari logica B} it is a theorem that $\vdash\Jzero\varphi\land\Jdue\varphi\to 0$, then $\Gamma\cup\{ \varphi \}$ is inconsistent. The proof is analogue in case $\Gamma\vdash\Juno\varphi$. 
\end{proof}

\begin{lemma}[Lindenbaum's Lemma]\label{lemma: Lindenbaum}
	Let $\Gamma$ be a consistent set of formulas such that $\Gamma\not\vdash\varphi$, for some $\varphi\in Fm$, then there exists a maximally consistent set of formulas $w$ such that $\Gamma\subseteq w$ and such that $\varphi\not\in w$.
\end{lemma}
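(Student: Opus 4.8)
The plan is to adapt the standard Lindenbaum construction to the three-valued trichotomy of Definition~\ref{def: insieme massimale consistente}. Since the language is countable, I would fix an enumeration $\psi_0,\psi_1,\psi_2,\dots$ of $Fm$ and build an increasing chain $\Gamma=\Gamma_0\subseteq\Gamma_1\subseteq\cdots$ while maintaining the single invariant $\Gamma_n\not\vdash\varphi$. Note that this invariant already secures consistency, since an inconsistent set proves every formula, in particular $\varphi$. At stage $n+1$ I would \emph{decide} the status of $\psi_n$ by adjoining to $\Gamma_n$ exactly one of $\psi_n$, $\neg\psi_n$, $\Juno\psi_n$, choosing one whose addition preserves the invariant. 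Finally I set $w\coloneqq\bigcup_{n}\Gamma_n$.

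The crux is the following extension step: if $\Gamma_n\not\vdash\varphi$, then for $\psi\coloneqq\psi_n$ at least one of $\Gamma_n\cup\{\psi\}$, $\Gamma_n\cup\{\neg\psi\}$, $\Gamma_n\cup\{\Juno\psi\}$ still fails to prove $\varphi$. I would argue by contradiction, assuming all three derive $\varphi$. Applying the Deduction Theorem (Theorem~\ref{th: teo. deduzione Bochvar modale}) to each derivation and collecting all premises drawn from $\Gamma_n$ into a single external conjunction $G\coloneqq\Jdue\gamma_1\wedge\cdots\wedge\Jdue\gamma_m$ (legitimate by weakening, as all formulas involved are external and hence obey classical logic), the three derivations become
\[
\vdash G\wedge\Jdue\psi\to\Jdue\varphi,\qquad \vdash G\wedge\Jzero\psi\to\Jdue\varphi,\qquad \vdash G\wedge\Juno\psi\to\Jdue\varphi,
\]
using that the premise $\neg\psi$ contributes $\Jdue\neg\psi=\Jzero\psi$ and that $\Juno\psi$ is external, so $\Jdue\Juno\psi\leftrightarrow\Juno\psi$ by Lemma~\ref{lemma: fatti basilari logica B}. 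Since $\vdash\Jdue\psi\lor\Jzero\psi\lor\Juno\psi$ is a theorem (immediate from ($\rho$-B10) together with excluded middle applied to the external formula $\Jzero\psi\lor\Juno\psi$), a three-way case analysis, valid because it takes place entirely among external formulas, yields $\vdash G\to\Jdue\varphi$, whence $\Gamma_n\vdash\varphi$ by the Deduction Theorem, contradicting the invariant.

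It then remains to check that $w$ has the required properties. That $\varphi\notin w$ is immediate: otherwise $\varphi\in\Gamma_n$ for some $n$, so $\Gamma_n\vdash\varphi$. Consistency of $w$ follows by finitarity, localizing any derivation of $0$ to some $\Gamma_n$. For maximality I would verify the trichotomy of Definition~\ref{def: insieme massimale consistente}: \emph{at least one} of $\psi_n$, $\neg\psi_n$, $\Juno\psi_n$ lies in $w$ because $\psi_n$ is decided at stage $n+1$, and \emph{at most one} does because any two of them together entail $0$ (for instance $\psi,\neg\psi\in w$ give $\Jdue\psi$ and $\Jzero\psi$, and $\vdash\Jzero\psi\wedge\Jdue\psi\to 0$ by Lemma~\ref{lemma: fatti basilari logica B}); hence exactly one holds, as required.

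The main obstacle is the extension step, and specifically the bookkeeping inside the three-valued Deduction Theorem: one must uniformize the three finite premise sets into a common external antecedent $G$, correctly translate the side formulas $\neg\psi$ and $\Juno\psi$ through the $\Jdue$-wrapping, and justify the three-way case split by the external tautology $\Jdue\psi\lor\Jzero\psi\lor\Juno\psi$. Once this is in place, the chain construction and the final verification are routine.
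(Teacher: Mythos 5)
Your proof is correct, but it takes a genuinely different route from the paper's. You run the chain under the single invariant $\Gamma_n\not\vdash\varphi$ and establish the three-way extension step directly from the Deduction Theorem (Theorem~\ref{th: teo. deduzione Bochvar modale}) combined with the external trichotomy $\vdash\Jdue\psi\lor\Jzero\psi\lor\Juno\psi$; the paper instead maintains mere consistency, handles $\varphi$ by a special first step $\Gamma_0=\Gamma\cup\{\neg\varphi\}$ (or $\Gamma\cup\{\Juno\varphi\}$ if that is inconsistent) so that $\varphi\notin w$ follows from consistency at the end, and discharges the inductive step by appealing to Lemma~\ref{lemma: Insieme inconsistente} ($\Gamma\cup\{\psi\}$ inconsistent iff $\Gamma\vdash\neg\psi$ or $\Gamma\vdash\Juno\psi$) together with $\vdash_{\B}\Juno\psi\leftrightarrow\Juno\neg\psi$ and $\vdash\Juno\Juno\psi\to 0$. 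The two are close relatives --- the paper's Lemma~\ref{lemma: Insieme inconsistente} is itself proved via the Deduction Theorem in essentially the way you argue inline --- but your organization buys a slightly stronger conclusion ($w\not\vdash\varphi$, not merely $\varphi\notin w$), dispenses with the special stage-$0$ case, and concentrates the three-valued content in one clean external tautology, whereas the paper's buys modularity by reusing a separately stated inconsistency characterization and keeping each inductive step a short case analysis. Two points deserve explicit mention in a polished write-up, though neither is a gap: the common antecedent $G$ may be empty (if each of the three applications of the Deduction Theorem draws only on the freshly added formula), in which case the case split yields $\vdash\Jdue\varphi$, hence $\Gamma_n\vdash\varphi$ anyway; and the ``at most one'' half of maximality needs all three pairwise clashes, not only $\psi,\neg\psi$ --- e.g.\ $\psi,\Juno\psi\in w$ give $\Jdue\psi$ and, via ($\rho$-B10) and classical reasoning on externals, $\neg\Jdue\psi$, contradicting consistency in the sense of Remark~\ref{rem: caratterizzazione equivalente inconsistenza}.
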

\begin{proof}
Consider an enumeration $\psi_{1},\psi_{2},\psi_{3},\dots$ of the formulas in $Fm$. Define: 
$$
\Gamma_{0} = \begin{cases}
\Gamma \cup\{\neg\varphi\} \text{ if consistent,}\\
\Gamma \cup\{\Juno\varphi\} \text{ otherwise.}
\end{cases}
$$

$$
\Gamma_{i+1} = \begin{cases}
\Gamma_{i} \cup\{\psi_{i}\} \text{ if consistent, else}\\
\Gamma_{i} \cup\{\neg\psi_{i}\} \text{ if consistent, else}\\
\Gamma _{i}\cup\{\Juno\psi_{i}\}.
\end{cases}
$$
$$w=\bigcup_{i\in\mathbb{N}}\Gamma_{i}.$$

\noindent
Observe that, by construction, $w$ is maximal. We want to show that $w$ is also consistent. We first claim that $\Gamma_{0} $ is consistent. If $\Gamma_{0} = \Gamma \cup\{\neg\varphi\} $ then it is consistent by construction. Differently, $\Gamma_{0} = \Gamma \cup\{\Juno\varphi\} $, which means that $\Gamma \cup\{\neg\varphi\} $ is inconsistent. Hence, by Lemma \ref{lemma: Insieme inconsistente}, $\Gamma\vdash\neg\neg\varphi$ or $\Gamma\vdash\Juno\neg\varphi$. However, $\Gamma\not\vdash\neg\neg\varphi$ (since, by assumption, $\Gamma\not\vdash\varphi$ and $\vdash_{\B}\varphi\leftrightarrow\neg\neg\varphi$), so $\Gamma\vdash\Juno\neg\varphi$, which implies $\Gamma\vdash\Juno\varphi$ (as $\vdash_{\B}\Juno\varphi\leftrightarrow\Juno\neg\varphi$). By Lemma \ref{lemma: Insieme inconsistente}, $\Gamma_{0} = \Gamma \cup\{\Juno\varphi\} $ is consistent if and only if $\Gamma\not\vdash\neg\Juno\varphi$ and 
$\Gamma\not\vdash\Juno\Juno\varphi$. Now, since $\Gamma$ is consistent and $\Gamma\vdash\Juno\varphi$, then $\Gamma\not\vdash\neg\Juno\varphi$. Moreover, since $\Gamma$ is consistent $\Gamma\not\vdash\Juno\Juno\varphi$ (as $\vdash\Juno\Juno\varphi\to 0$). This shows that $\Gamma_{0}$ is consistent. \\
\noindent
We claim that $\Gamma_{i+1}$ is consistent, given that $\Gamma_{i}$ is. So, suppose that $\Gamma_{i}\cup\{\varphi\}$ and $\Gamma_{i}\cup\{\neg\varphi\}$ are inconsistent. Then, by Lemma \ref{lemma: Insieme inconsistente}, $\Gamma_{i}\vdash\neg\varphi$ or $\Gamma_{i}\vdash\Juno\varphi$, and, $\Gamma_{i}\vdash\neg\neg\varphi$ or $\Gamma_{i}\vdash\Juno\neg\varphi$. By consistency of $\Gamma_{i}$, the only possible case is that $\Gamma_{i}\vdash\Juno\varphi$ and $\Gamma_{i}\vdash\Juno\neg\varphi$, from which follows the consistency of $\Gamma\cup\{\Juno\varphi\}$ (indeed, if it is not consistent then, by Lemma \ref{lemma: Insieme inconsistente}, $\Gamma_{i}\vdash\neg\Juno\varphi$, in contradiction with the consistency of $\Gamma_{i}$). This shows that $w$ is maximal and consistent and, by construction, $\neg\varphi\in w$ or $\Juno\varphi\in w$, therefore $\varphi\not\in w$.	
\end{proof}

As a first step to introduce canonical models, let us define the \emph{canonical relation}. 

\begin{definition}\label{def: relazione canonica}
	Let $\mathcal{W}$ be the set of all maximally consistent set of formulas. Then the \emph{canonical relation} $\mathcal{R}\subset \mathcal{W}\times \mathcal{W}$ for $\B^\square$ is defined, for every $w,s\in \mathcal{W}$ as: 
	$$w\mathcal{R} s \text{ iff }\forall\varphi\in Fm \text{ s.t. } \Box\varphi\in w \text{ then } \varphi\in s. $$
	
\end{definition}

\begin{lemma}[Existence Lemma]\label{lemma: di esistenza}
	For every maximally consistent set of formulas $w\in\mathcal{W}$ such that $\diamond\varphi\in w$ (for some $\varphi\in Fm$) then $\varphi$ is meaningful in $w$ and there exists a maximally consistent set $s\in\mathcal{W}$ such that $w\mathcal{R}s$ and either $\varphi\in s$ or $\Juno\varphi\in s$. 
\end{lemma}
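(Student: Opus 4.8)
The plan is to establish the two assertions of the statement separately. For \emph{meaningfulness}, since $\diamond\varphi\in w$ we trivially have that $\diamond\varphi\in w$ or $\neg\diamond\varphi\in w$, so applying Lemma~\ref{lemma: sulle formule sensate nei mondi massimali} to the formula $\diamond\varphi$ tells us that $\diamond\varphi$ is meaningful in $w$; reading the same lemma now for $\varphi$ itself, the equivalence between ``$\diamond\varphi$ meaningful'' and ``$\varphi$ meaningful'' gives that $\varphi$ is meaningful in $w$.

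For the \emph{existence of $s$}, I would run the canonical-model construction adapted to the three-valued setting. Set $\Delta=\{\psi\in Fm : \Box\psi\in w\}$. The key claim is that $\Delta\not\vdash\neg\varphi$. Granting this, $\Delta$ is in particular consistent (an inconsistent set proves everything), so Lindenbaum's Lemma~\ref{lemma: Lindenbaum} yields a maximally consistent $s\supseteq\Delta$ with $\neg\varphi\notin s$. By Definition~\ref{def: relazione canonica} we have $w\mathcal{R}s$, since $\Delta\subseteq s$ means every $\psi$ with $\Box\psi\in w$ lies in $s$; and because $s$ is maximally consistent with $\neg\varphi\notin s$, the trichotomy for maximal sets forces $\varphi\in s$ or $\Juno\varphi\in s$, as required.

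It remains to prove $\Delta\not\vdash\neg\varphi$, which is the crux. Suppose toward a contradiction that $\Delta\vdash\neg\varphi$. By the Deduction Theorem~\ref{th: teo. deduzione Bochvar modale} there are $\psi_1,\dots,\psi_n\in\Delta$ with $\vdash\Jdue\psi_1\wedge\dots\wedge\Jdue\psi_n\to\Jzero\varphi$, using $\Jdue\neg\varphi=\Jzero\varphi$. Necessitating by (N) and invoking that (BK) (Lemma~\ref{lemma: K per Bochvar su esterne}) makes $\Box$ behave normally on external formulas, $\Box$ distributes over the (external) antecedent, giving $\vdash\Box\Jdue\psi_1\wedge\dots\wedge\Box\Jdue\psi_n\to\Box\Jzero\varphi$. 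Now each $\Box\psi_i\in w$ yields $\Jdue\Box\psi_i\in w$ (Lemma~\ref{lemma: proprieta mondi massimali consistenti - 2}) and hence $\Box\Jdue\psi_i\in w$ via axiom (B3); closing $w$ under conjunction and modus ponens (again Lemma~\ref{lemma: proprieta mondi massimali consistenti - 2}) we obtain $\Box\Jzero\varphi\in w$. Finally, instantiating axiom (B4) at $\neg\varphi$ and simplifying with $\neg\neg$, with $\diamond\varphi=\neg\Box\neg\varphi$ and $\Jzero\Box\neg\varphi=\Jdue\diamond\varphi$ (and $\Jzero\neg\neg\varphi\equiv\Jzero\varphi$ under $\Box$), one gets $\vdash\Jdue\diamond\varphi\to\neg\Box\Jzero\varphi$; since $\Box\Jzero\varphi$ is external (by (B2)) we may contrapose to get $\vdash\Box\Jzero\varphi\to\neg\Jdue\diamond\varphi$, whence $\neg\Jdue\diamond\varphi\in w$. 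But $\diamond\varphi\in w$ gives $\Jdue\diamond\varphi\in w$ (Lemma~\ref{lemma: proprieta mondi massimali consistenti - 2}), contradicting the consistency of $w$.

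I expect the main obstacle to be precisely this last step: correctly matching the derived formula $\Box\Jzero\varphi$ against the possibility hypothesis $\diamond\varphi\in w$ through axiom (B4). The bookkeeping with the external abbreviations and the repeated use of the fact that classical reasoning — in particular contraposition and replacement of provable equivalents under $\Box$ — is licensed \emph{only} on external formulas must be handled with care; this is where axioms (B3) and (B4) do the real work, the former to move $\Box$ past $\Jdue$ and the latter to convert a box of a falsity-statement into the negation of the relevant possibility.
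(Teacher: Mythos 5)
Your proof is correct, and it is in substance the paper's own canonical-model argument with two mild but genuine streamlinings. First, you take the full denecessitation set $\Delta=\{\psi : \Box\psi\in w\}$, whereas the paper works with the restricted set $s^{-}=\{\Jdue\psi : \Box\Jdue\psi\in w\}$: your choice makes $w\mathcal{R}s$ immediate from Definition \ref{def: relazione canonica}, while the paper must verify it separately via (B3) --- though you pay the same coin elsewhere, invoking (B3) inside the crux to pass from $\Box\psi_i\in w$ to $\Box\Jdue\psi_i\in w$. Second, once $\Box\Jzero\varphi\in w$ is reached (the paper reaches literally the same formula, written $\Box\Jdue\neg\varphi$), the paper continues on the diamond side, rewriting it as $\neg\diamond(\Jdue\varphi\lor\Juno\varphi)\in w$, distributing $\diamond$ via Lemma \ref{lemma: fatti basilari logica modale}(3) and contradicting Lemma \ref{lemma: fatti basilari logica modale}(4); you instead contrapose a (B4)-instance directly, so you need only box-distribution (Lemma \ref{lemma: fatti basilari logica modale}(2)) and bypass parts (3)--(4) of that lemma. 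Since Lemma \ref{lemma: fatti basilari logica modale}(4) is itself an immediate consequence of (B4), the two routes bottom out in the same axiom and are dual presentations of one argument; if anything, your explicit handling of the replacement of $\Jzero\neg\neg\varphi$ by $\Jzero\varphi$ under $\Box$ (via (N) and (BK) on external formulas) is more careful than the paper, which performs this identification silently in proving Lemma \ref{lemma: fatti basilari logica modale}(4). Your meaningfulness argument via Lemma \ref{lemma: sulle formule sensate nei mondi massimali} coincides with the paper's. Two cosmetic slips, neither a gap: the externality of $\Box\Jzero\varphi$ is purely syntactic --- every variable occurrence is covered by $\Jzero$, so Definition \ref{def: formula esterna} applies directly and the appeal to (B2) is unnecessary and misleading; and, as the paper does for $s^{-}$, you should note that $\Delta\neq\emptyset$ (every theorem $\chi$ satisfies $\Box\chi\in w$ by (N) and Lemma \ref{lemma: proprieta mondi massimali consistenti - 1}(3)), so that the Deduction Theorem \ref{th: teo. deduzione Bochvar modale} indeed yields a nonempty list of premises $\psi_1,\dots,\psi_n\in\Delta$.
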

\begin{proof}
	Suppose that $\diamond\varphi\in w$, for some $w$ maximally consistent set of formulas. Consider the set
	$$ s^{-} = \{\Jdue\psi | \Box\Jdue\psi\in w\}. $$
	Observe that $s^{-}\neq\emptyset$, as for every formula $\psi$ such that $\vdash\psi$, then $\vdash\Jdue\psi$, hence $\vdash\Box\Jdue\psi$, which implies $\Box\Jdue\psi\in w$, by Lemma \ref{lemma: proprieta mondi massimali consistenti - 1}-(3). Let us show that $ s^{-}$ is consistent. Suppose, by contradiction, that $ s^{-}$ is inconsistent, then $s^{-}\vdash\gamma$, for every $\gamma\in Fm$, thus, in particular, $ s^{-}\vdash\neg\varphi$. By Deduction Theorem there are formulas $\Jdue\psi_1,\dots,\Jdue\psi_n\in s^{-}  $ such that $\vdash \Jdue\Jdue\psi_1\land\dots\land\Jdue\Jdue\psi_n \to \Jdue\neg\varphi$. Recall that $\vdash_{\B}\Jdue\Jdue\gamma\leftrightarrow\Jdue\gamma$, for every $\gamma\in Fm$, thus $\vdash \Jdue\psi_1\land\dots\land\Jdue\psi_n \to \Jdue\neg\varphi$. By applying (N), we get $\vdash \Box(\Jdue\psi_1\land\dots\land\Jdue\psi_n \to \Jdue\neg\varphi)$ and by distributing box ((BK) and Lemma \ref{lemma: fatti basilari logica modale}), $\vdash \Box\Jdue\psi_1\land\dots\land\Box\Jdue\psi_n \to \Box\Jdue\neg\varphi)$. Observe that, by construction of $s^{-}$, $\Box\Jdue\psi_i\in w$, for every $i\in\{1,\dots, n\}$, hence $\Box\Jdue\neg\varphi\in w$, i.e. $\neg\diamond\neg\Jdue\neg\varphi\in w$. By Lemma \ref{lemma: fatti basilari logica B}, $\vdash\neg\Jdue\neg\varphi\leftrightarrow\Jdue\varphi\lor\Juno\varphi$, thus $\neg\diamond(\Jdue\varphi\lor\Juno\varphi)\in w$, which implies (by distributivity of diamond, Lemma \ref{lemma: fatti basilari logica modale}), $\neg(\diamond\Jdue\varphi\lor\diamond\Juno\varphi)\in w$. On the other hand, $\diamond\varphi\in w$, hence $\Jdue\diamond\varphi\in w$, which implies $\diamond\Jdue\varphi\lor\diamond\Juno\varphi\in w$, by Lemma \ref{lemma: fatti basilari logica modale}, giving raise to a contradiction with the fact that $w$ is consistent. Observe that we have also proved that $s^{-}\not\vdash\neg\varphi$, hence by Lindenbaum Lemma there exists a maximally consistent set $s$ such that $s^{-}\subseteq s$ and $\neg\varphi\not\in s$. By maximality, we have that either $\varphi\in s$ or $\Juno\varphi\in s$. To show that $w\mathcal{R} s$, suppose $\Box\gamma\in w$, for some $\gamma\in Fm$, then by Lemma \ref{lemma: proprieta mondi massimali consistenti - 2} $\Jdue\Box\gamma\in w$, hence, by (M3), $\Box\Jdue\gamma\in w$, and by construction $\Jdue\gamma\in s^{-}\subseteq s$, thus $\gamma\in s$ (by Lemma \ref{lemma: proprieta mondi massimali consistenti - 2}), showing that $w\mathcal{R} s$. Finally, let us show that $\varphi$ is meaningful in $w$. Since $\varphi\in w$, then $\Jdue\varphi\in w$ and $+\varphi\in w$ (as $\vdash_{\B}\Jdue\varphi\to +\varphi$), namely that $\diamond\varphi$ is meaningful in $w$ and so is $\varphi$ (Lemma \ref{lemma: sulle formule sensate nei mondi massimali}).
\end{proof}

We are ready to define the concept of \emph{canonical model}.

\begin{definition}\label{def: modello canonico}
	The \emph{canonical model} for $\B^\square$ is a model $\mathcal{M}=\pair{\mathcal{W},\mathcal{R}, v}$ where $\mathcal{W}$ is the set of all maximally consistent sets of formulas, $\mathcal{R}$ is the canonical relation for $\B^\square$ and $v$ is defined as follows: 
	\begin{itemize}
		\item $v(w,x)=1$ if and only if $x\in w$;
		\vspace{2pt}
		\item $v(w,x)=0$ if and only if $\neg x\in X$;
		\vspace{2pt}
		\item $v(w,x)=\ant$ if and only if $\Juno x\in w$, 
	\end{itemize}
	for every $w\in\mathcal{W}$ and propositional variable $x$.
\end{definition}

\begin{lemma}[Truth Lemma]\label{lemma: truth lemma}
	Let $\mathcal{M}=\pair{\mathcal{W},\mathcal{R}, v}$ be the canonical model. Then, for every formula $\varphi\in Fm$ and every $w\in\mathcal{W}$, the following hold: 
	
	\begin{enumerate}
		\item $v(w,\varphi)=1$ if and only if $\varphi\in w$;
		\vspace{2pt}
		\item $v(w,\varphi)=0$ if and only if $\neg \varphi\in w$;
		\vspace{2pt}
		\item $v(w,\varphi)=\ant$ if and only if $\Juno\varphi\in w$.
	\end{enumerate}
\end{lemma}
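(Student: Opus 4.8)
The plan is to prove the Truth Lemma by induction on the complexity of $\varphi$, simultaneously for all three clauses (1)--(3), since the infectious third value forces the cases to interact: knowing when $v(w,\varphi)=\ant$ is inseparable from knowing when it equals $1$ or $0$. The base case $\varphi=x$ a variable holds directly by the definition of $v$ in the canonical model (Definition \ref{def: modello canonico}). For the constants $0,1$ and the propositional connectives $\neg,\lor,\Jdue$, I would lean on Lemma \ref{lemma: proprieta mondi massimali consistenti - 2}: clause (4) there gives $\varphi\in w$ iff $\Jdue\varphi\in w$, clause (3) handles disjunction, and clause (5) together with Definition \ref{def: insieme massimale consistente} (exactly one of $\varphi\in w$, $\neg\varphi\in w$, $\Juno\varphi\in w$ holds) lets me pin down which of the three values is taken. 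Concretely, for $\neg\varphi$ I use that $v(w,\neg\varphi)$ is determined from $v(w,\varphi)$ by the $\WKt$-table, and match this against the facts $\neg\varphi\in w$, $\varphi\in w$, $\Juno\varphi\leftrightarrow\Juno\neg\varphi$ (Lemma \ref{lemma: fatti basilari logica B}); the infectious clause is immediate since $\Juno\varphi\in w$ iff $\Juno\neg\varphi\in w$.

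The genuinely new and hardest case is $\varphi=\Box\psi$, where I must reconcile the three-clause semantics of Definition \ref{def: semantica Box} with membership in maximally consistent sets, using the canonical relation of Definition \ref{def: relazione canonica}. I would argue the three clauses in turn. For the meaningless clause ($\ant$): by Definition \ref{def: semantica Box}, $v(w,\Box\psi)=\ant$ iff $v(w,\psi)=\ant$, which by induction is $\Juno\psi\in w$; and by Lemma \ref{lemma: sulle formule sensate nei mondi massimali} $\psi$ is meaningful in $w$ iff $\Box\psi$ is, so $\Juno\Box\psi\in w$ iff $\Juno\psi\in w$, closing this clause. For clause (1), the forward direction $v(w,\Box\psi)=1$ requires $v(w,\psi)\neq\ant$ and $v(s,\psi)=1$ for all $s$ with $w\mathcal{R}s$; by induction the latter says $\psi\in s$ whenever $w\mathcal{R}s$, and I must deduce $\Box\psi\in w$. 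The converse, assuming $\Box\psi\in w$, uses the canonical relation directly to get $\psi\in s$ (hence $v(s,\psi)=1$ by induction) for every $\mathcal{R}$-successor $s$, while meaningfulness of $\psi$ in $w$ follows from Lemma \ref{lemma: sulle formule sensate nei mondi massimali} applied to $\Box\psi$.

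The main obstacle will be the direction of clause (1) that goes from the semantic/successor condition back to $\Box\psi\in w$: I have ``$\psi\in s$ for all $\mathcal{R}$-successors $s$ of $w$'' and $\psi$ meaningful in $w$, and I need $\Box\psi\in w$. This is the standard ``hard half'' of a Truth Lemma, and here it is the contrapositive of the Existence Lemma: if $\Box\psi\notin w$, then since $\Box\psi$ is meaningful (Lemma \ref{lemma: sulle formule sensate nei mondi massimali}), maximality forces $\neg\Box\psi\in w$, i.e. $\diamond\neg\psi\in w$ after unwinding abbreviations and using the $\B^{\square}$-provable equivalences from Lemma \ref{lemma: fatti basilari logica B}; then Lemma \ref{lemma: di esistenza} supplies a successor $s$ with $w\mathcal{R}s$ containing $\neg\psi$ (or $\Juno\neg\psi$), contradicting $\psi\in s$ for all successors. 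Clause (0) is handled dually, via the $\diamond$-description in Remark \ref{rem: semantica Diamond} and again the Existence Lemma to witness a successor where $\psi$ fails to take value $1$. Throughout I must be careful that the meaningfulness side-conditions in Definition \ref{def: semantica Box} align exactly with the ``$\Box\psi$ meaningful in $w$'' equivalences, so that the trichotomy of Definition \ref{def: insieme massimale consistente} is respected and no value is double-counted; this bookkeeping, rather than any single deep step, is where the care is needed.
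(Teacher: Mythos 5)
Your proposal is correct and follows essentially the same route as the paper's proof: induction on formula complexity with the three clauses proved simultaneously, the hard direction of the $\Box\psi$ case argued by contradiction through the maximality trichotomy, the transfer of meaningfulness between $\psi$ and $\Box\psi$ via Lemma \ref{lemma: sulle formule sensate nei mondi massimali}, and the Existence Lemma \ref{lemma: di esistenza} applied to $\diamond\neg\psi\in w$ to produce the refuting successor, with the converse direction read off from the canonical relation. Your treatment is in fact slightly more complete than the printed proof, which displays only clause (1) for the box case and leaves the propositional cases and clauses (0) and ($\ant$) to the reader, exactly as you fill them in.
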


\begin{proof}
	By induction on the length of the formula $\varphi$. We just show (1) for the inductive step when $\varphi=\Box\psi$, for some $\psi\in Fm$.\\
	\noindent
	Observe that $v(w,\Box\psi) = 1$ iff $\psi$ is meaningful in $w$ (i.e. $v(w,\psi)\neq\ant$) and $\forall s$ s.t. $w\mathcal{R}s$, $v(s,\psi) = 1$, thus, by induction hypothesis, iff $\Juno\psi\not\in w$ and $\psi\in s$ $\forall s$ s.t. $w\mathcal{R}s$. \\
	\noindent
	$(\Rightarrow) $ Suppose, by contradiction, that $v(w,\Box\psi) = 1$ but $\Box\psi\not\in w$, hence, by maximality of $w$, $\Juno\Box\psi\in w$ or $\neg\Box\psi\in w$. Since $\psi$ is meaningful in $w$, so is $\Box\psi$ (Lemma \ref{lemma: sulle formule sensate nei mondi massimali}), thus $\Juno\Box\psi\not\in w$. So, $\neg\Box\psi\in w$, i.e. $\diamond\neg\psi\in w$, hence, by Existence Lemma \ref{lemma: di esistenza}, there exists $s'\in\mathcal{W}$ such that $w\mathcal{R}s'$ such that $\neg\psi\in s'$ or $\Juno\psi\in s'$, which implies, by induction hypothesis, that $v(s',\psi) = 0$ or $v(s',\psi)=\ant$, a contradiction.\\
	\noindent
	$(\Leftarrow) $ Let $\Box\psi\in w$. Then $\Jdue\Box\psi\in w$ (by Lemma \ref{lemma: proprieta mondi massimali consistenti - 2}) and $+\Box\psi\in w$ (since $\vdash_{\B}\Jdue\gamma\to +\gamma$). This means that $\Box\psi$ is meaningful in $w$, hence so is $\psi$. Moreover, for every $s\in\mathcal{W}$ such that $w\mathcal{R}s$, we have that $\psi\in s$ (by definition of $\mathcal{R}$), hence, by induction hypothesis, $v(s,\psi) = 1$, from which $v(w,\psi) = 1$. 
\end{proof}

\begin{theorem}[Completeness]\label{th: completezza Bochvar modale}
	$\Gamma\vdash_{\B}\varphi$ if and only if $ \Gamma\models^l_{\B}\varphi$.
\end{theorem}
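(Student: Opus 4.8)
The plan is to prove the standard soundness and completeness biconditional by separating the two directions, with the canonical model machinery (already assembled above) doing the heavy lifting for completeness. For soundness, $\Gamma\vdash_{\B^\square}\varphi \Rightarrow \Gamma\models^l_{\B^\square}\varphi$, I would proceed by induction on the length of the derivation. First I would check that every axiom of $\B^\square$ is valid in the designated sense (evaluates to $1$ at every world of every model whenever its hypotheses do), treating the propositional axioms of $\B$ via the fact that $v$ is a $\WKt$-homomorphism in its second component, and then verifying the genuinely modal axioms (B1)--(B4) directly against Definition~\ref{def: semantica Box} and Remark~\ref{rem: semantica Diamond}. Then I would show each rule preserves the relevant semantic property: $(\rho$-B13$)$ and (BAlg3) are inherited from the propositional soundness of $\B$, while for the necessitation rule (N) I would argue that if $\varphi$ is valid (true at every world of every model) then so is $\Box\varphi$, using clause~(1) of Definition~\ref{def: semantica Box} together with the observation that validity of $\varphi$ forces $v(w,\varphi)\neq\ant$ everywhere.

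For completeness, $\Gamma\models^l_{\B^\square}\varphi \Rightarrow \Gamma\vdash_{\B^\square}\varphi$, I would argue contrapositively. Assume $\Gamma\not\vdash\varphi$. Then by Lindenbaum's Lemma (Lemma~\ref{lemma: Lindenbaum}) there is a maximally consistent set $w\supseteq\Gamma$ with $\varphi\notin w$. Consider the canonical model $\mathcal{M}=\pair{\mathcal{W},\mathcal{R},v}$ of Definition~\ref{def: modello canonico}, placed at the world $w$. By the Truth Lemma (Lemma~\ref{lemma: truth lemma}), for every $\gamma\in\Gamma$ we have $\gamma\in w$, hence $v(w,\gamma)=1$; and since $\varphi\notin w$ the Truth Lemma gives $v(w,\varphi)\neq 1$. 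This exhibits a model and a world witnessing $\Gamma\not\models^l_{\B^\square}\varphi$, completing the contrapositive.

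The one genuine subtlety — and the step I would flag as the main obstacle — is that the completeness argument is only legitimate once the canonical model is verified to be an honest Bochvar-Kripke model, i.e.\ that its valuation $v$ really satisfies the clauses of Definition~\ref{def: semantica Box} on $\Box$-formulas (not merely that $v$ is defined on variables). This is precisely the content of the $\Box$-case of the Truth Lemma, whose forward direction leans on the Existence Lemma (Lemma~\ref{lemma: di esistenza}) to produce a related world refuting $\psi$ when $\Box\psi\notin w$, and whose backward direction uses the definition of $\mathcal{R}$ together with the meaningfulness transfer of Lemma~\ref{lemma: sulle formule sensate nei mondi massimali}. Since the Truth Lemma is already established above, the remaining work for the theorem itself is light: I would simply stitch together soundness and the contrapositive of completeness, being careful in the soundness direction to phrase validity of $\Box\varphi$ correctly (noting that $v(w,\varphi)\ne\ant$ at \emph{every} world is what makes clause~(1) fire, which is exactly what global validity of $\varphi$ guarantees). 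Finally, I would note that the theorem statement as displayed writes $\vdash_{\B}$ and $\models^l_{\B}$, which I read as shorthand for $\vdash_{\B^\square}$ and $\models^l_{\B^\square}$ throughout this Section.
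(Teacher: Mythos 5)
Your proposal is correct and follows essentially the same route as the paper: the paper's proof likewise dispatches soundness with a brief check that the axioms are valid and the rules preserve validity, and obtains completeness contrapositively by applying Lindenbaum's Lemma~\ref{lemma: Lindenbaum} to get a maximally consistent $s\supseteq\Gamma$ with $\varphi\notin s$ and then reading off the canonical countermodel via the Truth Lemma~\ref{lemma: truth lemma}. Your additional remarks --- that the canonical model must genuinely satisfy Definition~\ref{def: semantica Box} (the content of the $\Box$-case of the Truth Lemma, via the Existence Lemma~\ref{lemma: di esistenza}), that soundness of (N) rests on validity of $\varphi$ forcing $v(w,\varphi)\neq\ant$ everywhere, and that $\vdash_{\B}$ abbreviates $\vdash_{\B^{\square}}$ here --- are all accurate elaborations of steps the paper leaves implicit.
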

\begin{proof}
	$(\Rightarrow)$ It is easily checked that all the axioms are sound and the rules preserve soundness.\\
	\noindent
	$(\Leftarrow)$ Suppose $\Gamma\not\vdash\varphi$. Then $\Gamma$ is a consistent set of formulas, therefore, by the Lindenbaum Lemma \ref{lemma: Lindenbaum}, there exist a maximally consistent set $s$ such that $\Gamma\subseteq s$ and $\varphi\not\in s$, hence, by Truth Lemma \ref{lemma: truth lemma}, there exists a canonical countermodel, namely $(\mathcal{W},\mathcal{R},v)$, with $v(s,\gamma) = 1$ for all $\gamma\in\Gamma$ and $v(s,\varphi) \neq 1$.
\end{proof}

In order to prove decidability for $\B^\square$, we employ the filtration technique (see \cite[pp. 77-80]{BlackburnBook}). First we need to provide an extended notion of closure under subformulas. 

\begin{definition}\label{def.: chiusura per sottoformule}
	A set of formulas $\Sigma$ is closed under subformulas if $\forall\varphi,\psi\in \Sigma$:
	\begin{enumerate}
		\item if $\varphi\circ\psi\in \Sigma$ for any binary connective $\circ$, then $\varphi,\psi\in \Sigma$;
		\item if $\neg\varphi\in \Sigma$ or $\Jdue\varphi\in \Sigma$, then $\varphi\in \Sigma$;
		\item if $\square\varphi\in \Sigma$, then $\varphi\in \Sigma$ and $+\varphi\in \Sigma$; 
	\end{enumerate}

\end{definition}

Notice that if a set of formulas is finite, its closure under subformulas is still finite.

\begin{definition}
	Let $\pair{W,R, v}$ be a model and $\Sigma$ be a finite set of formulas closed under subformulas. This set induces an equivalence relation over $W$ defined as follows: $w \equiv_{\Sigma} s$ iff $\forall \varphi\in\Sigma (v(w,\varphi)=1$ iff $v(s,\varphi)=1)$.    
\end{definition}

When the reference set $\Sigma$ is clear from the context, we denote the equivalence class $[w]_{\equiv_\Sigma}$ simply by $[w]$.

\begin{definition}\label{def.: filtrazione}
	Let $M = \pair{W,R, v}$ be a model and $\Sigma$ be a finite set of formulas closed under subformulas. The filtration of $M$ through $\Sigma$ is the model $\pair{W^f,R^f, v^f}$ defined as:
	\begin{enumerate}
		\item $W^f = W/$$\equiv_{\Sigma}$; 
		\item $[w]R^f[s]$ iff $\exists w'\in [w],s'\in [s]$ s.t. $w'Rs'$;
		\item $v^f([w],p) = 1$ iff $v(w,p)=1$, for all variables $p \in \Sigma$. 
	\end{enumerate}
\end{definition}

\begin{lemma}\label{lemma: filtrations}
	Let $\pair{W^f,R^f, v^f}$ be a filtration of $M = \pair{W,R, v}$ through $\Sigma$. For all $\varphi \in \Sigma, w \in W$, it holds $v(w,\varphi)=1$ iff $v^f([w],\varphi)=1$.
\end{lemma}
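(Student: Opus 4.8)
The plan is to prove the Filtration Lemma by induction on the structure (length) of $\varphi \in \Sigma$, exploiting the fact that $\Sigma$ is closed under subformulas, so that whenever $\varphi$ is in $\Sigma$ all its immediate subconstituents are in $\Sigma$ as well, and the induction hypothesis applies to them. The base case is a variable $p \in \Sigma$, which holds immediately by clause (3) of Definition \ref{def.: filtrazione}. For the propositional connectives $\neg, \lor, \Jdue$ (and the defined $\land$), the argument is routine: since $v$ and $v^f$ are both homomorphisms into $\WKt$ in their propositional part, the value $1$ of a compound formula is determined by the values of its subformulas, and one translates ``$v(w,\cdot)=1$'' through the truth-tables of Figure \ref{fig:WKe}. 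Here a small subtlety is that the statement only tracks the value $1$, not the three-way distinction between $0,\ant,1$; so for, say, $\neg\varphi$ one cannot directly read off $v(w,\neg\varphi)=1$ from $v(w,\varphi)\neq 1$. The clean way around this is to first upgrade the induction hypothesis, using that $\Sigma$ is closed under subformulas together with Lemma \ref{lemma: truth lemma}-style reasoning, to the stronger three-valued equivalence $v(w,\varphi)=v^f([w],\varphi)$ for all $\varphi\in\Sigma$; this stronger statement is what makes the connective cases mechanical.

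The genuinely modal case $\varphi = \Box\psi$ is where the work lies, and it is the main obstacle. By clause (3) of Definition \ref{def.: chiusura per sottoformule}, both $\psi$ and $+\psi$ lie in $\Sigma$, so the induction hypothesis is available for them. Recall from Definition \ref{def: semantica Box} that $v(w,\Box\psi)=1$ iff $v(w,\psi)\neq\ant$ and $v(s,\psi)=1$ for all $s$ with $wRs$. I would split this into the meaningfulness condition and the universal condition. The meaningfulness condition $v(w,\psi)\neq\ant$ is handled by the induction hypothesis applied to $+\psi$ (or to $\psi$ in its strengthened three-valued form), since $+\psi\in\Sigma$ tracks exactly the non-$\ant$-ness of $\psi$, and this is preserved under $\equiv_\Sigma$. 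The harder half is matching the universal quantifier over $R$-successors with the one over $R^f$-successors.

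For the $(\Leftarrow)$ direction, assume $v^f([w],\Box\psi)=1$; I must show $v(w,\Box\psi)=1$. Take any $s$ with $wRs$; by definition of $R^f$ (clause (2)), we get $[w]R^f[s]$, and the filtered semantics together with $\psi\in\Sigma$ and the induction hypothesis yields $v(s,\psi)=1$, as required; the meaningfulness clause transfers from $[w]$ to $w$ via $+\psi\in\Sigma$. The delicate direction is $(\Rightarrow)$: assume $v(w,\Box\psi)=1$ and suppose $[w]R^f[t]$; I need $v^f([t],\psi)=1$. By clause (2) there are representatives $w'\equiv_\Sigma w$ and $t'\equiv_\Sigma t$ with $w'Rt'$, but a priori the premise only gives information about $R$-successors of $w$, not of $w'$. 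The resolution is exactly the standard filtration trick: since $\Box\psi\in\Sigma$ and $w'\equiv_\Sigma w$, we have $v(w',\Box\psi)=v(w,\Box\psi)=1$, so from $w'Rt'$ the semantics gives $v(t',\psi)=1$; then $t'\equiv_\Sigma t$ and $\psi\in\Sigma$ give $v(t,\psi)=1$, whence by the induction hypothesis $v^f([t],\psi)=1$. One then assembles the universal condition and the meaningfulness condition to conclude $v^f([w],\Box\psi)=1$, completing the induction.
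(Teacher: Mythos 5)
Your overall route is the paper's: induction on the complexity of $\varphi\in\Sigma$, the case $\Jdue\psi$ handled through the homomorphism property, the box case split into the meaningfulness condition (via $+\psi\in\Sigma$, guaranteed by clause (3) of Definition \ref{def.: chiusura per sottoformule}) and the universal condition over successors. In the modal case you are in fact \emph{more} careful than the paper, whose proof compresses the direction $v(w,\Box\psi)=1 \Rightarrow v^f([w],\Box\psi)=1$ into the phrase ``since this covers all the successors of $[w]$'' --- which is not literally true, because an $R^f$-successor $[t]$ of $[w]$ need only arise from some $w'\in[w]$ with $w'Rt'$, $t'\in[t]$, not from $w$ itself. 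Your repair is exactly the standard one: since $\Box\psi\in\Sigma$, the value-$1$ status of $\Box\psi$ transfers along $\equiv_\Sigma$ from $w$ to $w'$, then the semantics at $w'$ gives $v(t',\psi)=1$, and $t'\equiv_\Sigma t$ plus the induction hypothesis finishes. That part of your proposal is correct and fills a genuine gloss in the paper. One small bookkeeping point: in the box case you invoke the induction hypothesis for $+\psi$, which is syntactically \emph{longer} than $\Box\psi$; this needs an induction measure such as the lexicographic pair (number of occurrences of $\Box$, length), or the paper's device of reducing $+\psi$ to the Boolean and $\Jdue$ cases applied to $\psi$ and $\neg\psi$.

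The genuine gap is in your proposed fix for the Boolean subtlety. You correctly notice that the one-value induction hypothesis is too weak for negation (from ``$v(w,\psi)=1$ iff $v^f([w],\psi)=1$'' one cannot infer agreement on the value $0$, hence nothing about $\neg\psi$). But your repair --- upgrading the induction hypothesis to the full three-valued agreement $v(w,\varphi)=v^f([w],\varphi)$ for all $\varphi\in\Sigma$ --- is \emph{false} under the paper's definitions, because $\equiv_\Sigma$ demands agreement only on the value $1$. Concretely, let $\Sigma=\{p\lor q,\,p,\,q\}$ (the subformula closure of $p\lor q$) and take worlds $w,s$ with $v(w,p)=0$, $v(s,p)=\ant$, $v(w,q)=v(s,q)=0$: all three formulas of $\Sigma$ take a value different from $1$ at both worlds, so $w\equiv_\Sigma s$, yet $v(w,p)\neq v(s,p)$, so no choice of $v^f([w],p)$ can agree with both representatives. (Your appeal to ``Lemma \ref{lemma: truth lemma}-style reasoning'' does not help here; that lemma concerns the canonical model.) Two viable repairs: (i) strengthen the filtration equivalence to full three-valued agreement, $w\equiv_\Sigma s$ iff $v(w,\varphi)=v(s,\varphi)$ for all $\varphi\in\Sigma$, and set $v^f([w],p)=v(w,p)$; then your strengthened induction goes through verbatim and the finite-model bound merely becomes $3^{card(\Sigma)}$; or (ii) keep the paper's relation and prove the one-value statement, observing that $0$-status information is needed only for those $\psi$ with $\neg\psi$ or $\Jdue\neg\psi$ in $\Sigma$ --- and precisely for $\psi$ under a box, closure clause (3) places $+\psi$, hence $\Jdue\psi$, $\Jdue\neg\psi$ and $\neg\psi$, into $\Sigma$, so the exact value of $\psi$ at $w$ is encoded by $1$-statuses of formulas of $\Sigma$ and does transfer across $\equiv_\Sigma$. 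Either way the lemma is saved, but as written your strengthened hypothesis would not survive the base case.
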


\begin{proof}
	By induction on the complexity of $\varphi \in \Sigma$. The Boolean cases are straightforward. Let $\varphi = \Jdue\psi$, for some $\psi\in Fm$. $v(w,\Jdue\psi)=1$ iff $v(w,\psi)=1$ iff, by induction hypothesis, $v^f([w],\psi)=1$ iff $v^f([w],\Jdue\psi)=1$. Notice that by closure $\psi\in \Sigma$.
	
	Let $\varphi = \square\psi$, for some $\psi\in Fm$. Suppose $v(w,\square\psi)=1$, which means that $v(w,+\psi)=1$ and for all $s\in W$ s.t. $wRs, v(s,\psi)=1$. Now $+\psi := \Jdue\psi \lor \Jdue\neg\psi$, by the Boolean cases and the previous one we conclude $v^f([w],+\psi)=1$. By definition of filtration, $[w]R^f[s]$, and by induction hypothesis $v^f([s],\psi)=1$. Since this covers all the successors of $[w]$, then $v^f([w],\square\psi)=1$. Notice that by closure of $\psi$ under subformula, $\Jdue\psi \lor \Jdue\neg\psi\in \Sigma$. The other direction follows similarly.
\end{proof}

\begin{theorem}\label{th.: filtrazioni soddisfacilitÃ }
	If a formula $\varphi$ is satisfiable in a model, it is satisfiable in a finite model.
\end{theorem}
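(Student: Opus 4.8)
The plan is to prove the finite model property via filtration, reducing satisfiability to satisfiability in a finite model whose size is bounded by the syntactic closure of the target formula. Suppose $\varphi$ is satisfiable, so there exist a Bochvar-Kripke model $M=\pair{W,R,v}$ and a world $w_0\in W$ with $v(w_0,\varphi)=1$. First I would let $\Sigma$ be the closure under subformulas (in the sense of Definition \ref{def.: chiusura per sottoformule}) of the singleton $\{\varphi\}$; since $\{\varphi\}$ is finite, $\Sigma$ is finite, and crucially $\varphi\in\Sigma$. I then form the filtration $M^f=\pair{W^f,R^f,v^f}$ of $M$ through $\Sigma$ as in Definition \ref{def.: filtrazione}.

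The two things I need to check are that $M^f$ is finite and that it still satisfies $\varphi$ at the appropriate world. Finiteness is immediate: the equivalence relation $\equiv_\Sigma$ has at most $2^{|\Sigma|}$ classes, because a class is determined by the set of formulas in $\Sigma$ receiving value $1$, so $W^f=W/{\equiv_\Sigma}$ is finite. For satisfaction, I invoke Lemma \ref{lemma: filtrations}: applying it to $\varphi\in\Sigma$ at the world $w_0$ gives $v(w_0,\varphi)=1$ iff $v^f([w_0],\varphi)=1$, and since the left-hand side holds by assumption, we obtain $v^f([w_0],\varphi)=1$. Hence $\varphi$ is satisfied in the finite model $M^f$ at the world $[w_0]$.

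The one point that requires genuine care — and which I expect to be the main obstacle — is verifying that $M^f$ is itself a legitimate \emph{Bochvar-Kripke model}, i.e. that its valuation $v^f$ really evaluates modal formulas according to the clauses of Definition \ref{def: semantica Box} rather than being an arbitrary three-valued Kripke model. This is exactly what the proof of Lemma \ref{lemma: filtrations} is designed to secure for formulas in $\Sigma$, so the delicate part is already encapsulated there: the filtration clause for $R^f$ (existence of related representatives) gives one direction of the $\square$-clause, while the closure condition $+\psi\in\Sigma$ whenever $\square\psi\in\Sigma$ ensures the ``meaningfulness'' side-condition $v^f([w],+\psi)=1$ is correctly transferred, so that the infectious value $\ant$ propagates as required. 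I would therefore state explicitly that $M^f$ is a Bochvar-Kripke model, with the verification of the semantic clauses for $\square$ following from the inductive argument of Lemma \ref{lemma: filtrations}.

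Putting these together completes the proof: from a satisfying model $M$ we have produced a finite Bochvar-Kripke model $M^f$ satisfying $\varphi$, which is precisely the finite model property asserted. Combined with the completeness theorem (Theorem \ref{th: completezza Bochvar modale}) and the recursive enumerability of the Hilbert-style proof system, this finite model property yields the decidability of $\B^\square$.
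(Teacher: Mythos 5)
Your proof is correct and takes essentially the same route as the paper's: form the finite subformula closure $\Sigma$ of $\{\varphi\}$, filtrate through $\Sigma$, transfer satisfaction of $\varphi$ to $[w_0]$ via Lemma \ref{lemma: filtrations}, and bound the size of $W^f$ by $2^{card(\Sigma)}$ --- your observation that each class is determined by the set of $\Sigma$-formulas receiving value $1$ is exactly the paper's injective map $g\colon W^f\to\mathscr{P}(\Sigma)$. Your extra remark that one should verify $M^f$ is a legitimate Bochvar-Kripke model (rather than an arbitrary weak three-valued one) is a reasonable point that the paper leaves implicit in Definition \ref{def.: filtrazione} and the inductive argument of Lemma \ref{lemma: filtrations}, and it does not alter the structure of the argument.
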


\begin{proof}
	Let $\varphi$ be satisfied by a model $M = \pair{W,R, v}$, and let $\Sigma$ be the closure under subformulas of $\{\varphi\}$. $\Sigma$ is finite. Now consider the filtration $M^f_\Sigma = \pair{W^f,R^f, v^f}$ of $M$ through $\Sigma$. By theorem \ref{lemma: filtrations}, $M^f_\Sigma$ satisfies $\varphi$. Consider the mapping $g: W^f \to \mathscr{P}(\Sigma)$ s.t. $g([w])=\{\psi | v(w,\psi)=1\}$. By definition of $\equiv_\Sigma$, $g$ is well-defined and injective. Denoting by $card(X)$ the cardinality of a set $X$, we have $card(W^f)\leq card(\mathscr{P}(\Sigma))=2^{card(\Sigma)}$.
\end{proof}

\begin{corollary}[Decidability]\label{th: Decidibilita}
	The logic $\B^{\Box}$ is decidable.
\end{corollary}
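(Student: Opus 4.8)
The plan is to derive decidability of $\B^{\square}$ as a direct consequence of the finite model property just established in Theorem \ref{th.: filtrazioni soddisfacilitÃ }, combined with the fact that $\B^{\square}$ is a \emph{finitely axiomatized} logic with a recursively enumerable set of theorems. The strategy is the standard ``completeness $+$ finite model property $\Rightarrow$ decidability'' argument, specialized to the three-valued relational setting.

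First I would observe that the set of theorems (and, more generally, the consequence relation restricted to finite premise sets) is recursively enumerable: the axiomatization in the definition of $\B^{\square}$ consists of finitely many axiom schemata together with the rules $(\rho\text{-B}13)$, $(\mathrm{BAlg}3)$, and $(\mathrm{N})$, so one can mechanically enumerate all derivations and hence all provable statements $\Gamma \vdash_{\B^{\square}} \varphi$ with $\Gamma$ finite. Second, I would set up an effective enumeration of \emph{counterexamples}: by the completeness theorem (Theorem \ref{th: completezza Bochvar modale}), $\Gamma \not\vdash \varphi$ precisely when there is a Bochvar-Kripke model and a world refuting the entailment, and by Theorem \ref{th.: filtrazioni soddisfacilitÃ } together with the explicit bound $\mathrm{card}(W^f)\leq 2^{\mathrm{card}(\Sigma)}$ obtained in its proof, such a refuting model may be taken to be \emph{finite} of size effectively bounded by the closure $\Sigma$ of the formulas involved. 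Since there are only finitely many frames of each bounded size and only finitely many valuations into the three-element algebra $\WKt$ on the relevant variables, one can effectively enumerate all finite Bochvar-Kripke countermodels and test each one.

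I would then assemble these two semidecision procedures: to decide whether $\Gamma \vdash_{\B^{\square}} \varphi$ (for finite $\Gamma$), run in parallel the proof-search, which halts and accepts if the entailment holds, and the finite-countermodel search, which halts and rejects if it fails. By soundness and completeness exactly one of the two terminates, so the combined procedure is a decision algorithm. For the pure theoremhood case this specializes to deciding $\vdash_{\B^{\square}} \varphi$, and the finite model bound guarantees the countermodel branch is genuinely a terminating search rather than an unbounded one.

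The only genuinely delicate point is to make sure the finite model property is applied to the right object: the completeness theorem is phrased for the consequence relation $\Gamma \vDash \varphi$ with a single designated value $1$, whereas Theorem \ref{th.: filtrazioni soddisfacilitÃ } speaks of \emph{satisfiability} of a single formula. To bridge this I would note that, via the Deduction Theorem (Theorem \ref{th: teo. deduzione Bochvar modale}), any finite entailment $\gamma_1,\dots,\gamma_n \vdash \varphi$ reduces to theoremhood of the single implication $\Jdue\gamma_1 \wedge \dots \wedge \Jdue\gamma_n \to \Jdue\varphi$, so that refuting this entailment amounts to satisfying its negation in the sense of having a world where the antecedent conjunction evaluates to $1$ while $\Jdue\varphi$ does not; this is exactly a satisfiability statement to which filtration applies with $\Sigma$ the (finite) closure under subformulas of the finitely many formulas in play. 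This reduction is routine given the results already proved, so no serious obstacle remains; the corollary follows immediately.
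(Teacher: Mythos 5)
Your proposal is correct and follows essentially the route the paper intends: the corollary is drawn directly from the finite model property of Theorem \ref{th.: filtrazioni soddisfacilitÃ } (with the explicit bound $2^{card(\Sigma)}$) together with completeness (Theorem \ref{th: completezza Bochvar modale}), and your bridge from finite entailment to single-formula satisfiability via the Deduction Theorem and the external formula $\Jdue\gamma_1\wedge\dots\wedge\Jdue\gamma_n\wedge\neg\Jdue\varphi$ is exactly the right way to make that reduction precise. The only cosmetic remark is that the parallel dovetailing of proof search and countermodel search is superfluous: since the size bound is computable, exhaustively checking all models up to that bound already yields a terminating decision procedure on its own.
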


\section{Modal $\PWKe$ logic}\label{sec: modal PWK}

The modal extension of the propositional logic $\PWKe$ is defined over the same formula algebra $\Fm$ of $\B^{\Box}$. The substantial (semantical) difference between $\PWKe^{\Box} $ and $\B^{\Box}$ concern the interpretation of the modal formulas $\Box\varphi$, which follows the choice of the different truth-set in $\PWKe$: namely a modal formula $\Box\varphi$ will hold in a state $w$ iff it will also hold in all the related states $s$, namely in those the formula is not false.

The logic $\pair{\Fm,\vdash_{\MPWK}}$ is the consequence relation induced by the following Hilbert-style axiomatization.

\vspace{5pt}
\noindent
\textbf{Axioms}

\begin{itemize}
	\item the axioms for $\PWKe$ introduced in Definition \ref{def: assiomatizzazione PWK}; 
	\vspace{3pt}
	\item[(P1)]  $\square(J_2\varphi\to J_2\psi)\to (\square J_2\varphi\to\square J_2\psi)$;
	\vspace{3pt}
	\item[(P2)]  $\Box\varphi\leftrightarrow \Box\neg\Jzero \varphi$;
	\vspace{3pt}
	\item[(P3)]  $+\varphi\leftrightarrow +\square \varphi$.

\end{itemize}

\vspace{4pt}
\noindent
\textbf{Deductive rules}

\begin{itemize}
 \item ($\rho$-B13) $\Jdue\varphi\leftrightarrow\Jdue\psi,\Jzero\varphi\leftrightarrow\Jzero\psi\vdash\varphi\equiv\psi$;
 \vspace{5pt}
        \item (PWKAlg3) $\varphi\dashv\vdash\neg\Jzero\varphi\leftrightarrow 1$.
	\vspace{5pt}
	
	\item (N): if $\vdash \varphi $ then $\vdash \square\varphi$. 
	
	

\end{itemize}
\vspace{5pt}

In this Section, by $\vdash$ we will mean $\vdash_{\MPWK}$.

\begin{lemma}\label{lemma: K per PWK su esterne}
    For $\alpha,\beta$ external formulas, the following is a theorem of \MPWK:
    \begin{itemize}
        \item[\emph{(\text{BK})}] $\square(\alpha\to\beta)\to (\square\alpha\to\square\beta)$
    \end{itemize}    
\end{lemma}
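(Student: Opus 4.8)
The plan is to mirror exactly the proof of Lemma~\ref{lemma: K per Bochvar su esterne} (the corresponding statement for $\B^\square$), since the only ingredients used there are the modal axiom (B1) together with the propositional fact that external formulas are provably equivalent to their $\Jdue$-forms. Here the modal system $\MPWK$ contains the analogous axiom (P1), namely $\square(J_2\varphi\to J_2\psi)\to (\square J_2\varphi\to\square J_2\psi)$, which is literally (B1) with the same shape, and on the propositional side $\PWKe$ enjoys the same equivalence $\vdash\alpha\leftrightarrow\Jdue\alpha$ for external $\alpha$ (Lemma~\ref{lemma: fatti basilare PWKe}, item (6)). So the architecture of the argument carries over verbatim.

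Concretely, first I would fix arbitrary external formulas $\alpha,\beta$ and instantiate (P1) at $\varphi:=\alpha$, $\psi:=\beta$, yielding $\vdash\square(\Jdue\alpha\to\Jdue\beta)\to (\square\Jdue\alpha\to\square\Jdue\beta)$. Next I would invoke Lemma~\ref{lemma: fatti basilare PWKe}(6), which gives $\vdash\gamma\leftrightarrow\Jdue\gamma$ for every external $\gamma$; since $\alpha$ and $\beta$ are external, we have $\vdash\alpha\leftrightarrow\Jdue\alpha$ and $\vdash\beta\leftrightarrow\Jdue\beta$, and moreover $\alpha\to\beta$ is again external (being built from external formulas), so $\vdash(\alpha\to\beta)\leftrightarrow\Jdue(\alpha\to\beta)$ and in fact $\Jdue\alpha\to\Jdue\beta$ is provably equivalent to $\alpha\to\beta$ on external inputs. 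Then I would use the replacement of provably equivalent formulas to rewrite each $\Jdue\alpha$, $\Jdue\beta$, and $\Jdue\alpha\to\Jdue\beta$ back to $\alpha$, $\beta$, and $\alpha\to\beta$ respectively inside the instantiated axiom, obtaining $\vdash\square(\alpha\to\beta)\to (\square\alpha\to\square\beta)$, which is (BK).

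The one point requiring minor care is the legitimacy of substituting equivalents under the $\square$ operator: one must know that $\MPWK$ supports a congruence/replacement principle for provable biconditionals, in particular inside the scope of $\square$. This is implicitly guaranteed by the rule ($\rho$-B13) together with (N) and the congruence built into the algebraizable base $\PWKe$, and it is precisely what the analogous Bochvar proof silently used when it wrote ``therefore we can substitute equivalent formulas.'' I expect this to be the only place where a reader might demand justification, but since the statement is pitched for external formulas (which behave classically) and the $\square$-axiom is being applied to $\Jdue$-guarded arguments, the replacement is unproblematic. Hence the proof reduces to the two-line imitation of Lemma~\ref{lemma: K per Bochvar su esterne}, with (P1) and Lemma~\ref{lemma: fatti basilare PWKe}(6) in place of (B1) and Lemma~\ref{lemma: fatti basilari logica B}.
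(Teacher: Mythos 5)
Your proof is correct and follows essentially the same route as the paper, whose proof of Lemma~\ref{lemma: K per PWK su esterne} consists of the single remark that it is the same as Lemma~\ref{lemma: K per Bochvar su esterne}: instantiate (P1), then rewrite via $\vdash\gamma\leftrightarrow\Jdue\gamma$ for external $\gamma$. Your explicit flagging of the replacement-of-equivalents step under $\square$ is a reasonable precaution, but it matches what the paper's Bochvar proof implicitly assumes, so there is no substantive divergence.
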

\begin{proof}
It is the same of Lemma \ref{lemma: K per Bochvar su esterne}.
\end{proof}

The Deduction theorem holding for $\PWKe$ can be actually extented to its modal version.

\begin{theorem}[Deduction Theorem]\label{lemma: deduzione PWK modale}
For the logic \MPWK, it holds that $\Gamma\vdash\varphi$ iff there exist some formulas $\gamma_1,\dots,\gamma_n\in \Gamma$ such that $\vdash\neg\Jzero\gamma_1\wedge\dots\wedge\neg\Jzero\gamma_n\to \neg\Jzero\varphi$.
\end{theorem}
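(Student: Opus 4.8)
The plan is to mimic the proof of the Deduction Theorem for $\B^{\square}$ (Theorem \ref{th: teo. deduzione Bochvar modale}), replacing the truth-preservation transformer $\Jdue(\cdot)$ by the non-falsity transformer $\neg\Jzero(\cdot)$ throughout, since this is precisely the $\tau$ transformer witnessing the algebraizability of $\PWKe$. The right-to-left direction is immediate: given $\vdash\neg\Jzero\gamma_1\wedge\dots\wedge\neg\Jzero\gamma_n\to\neg\Jzero\varphi$ with $\gamma_1,\dots,\gamma_n\in\Gamma$, one uses $\varphi\sineq\neg\Jzero\varphi$ (Lemma \ref{lemma: fatti basilare PWKe}-(2)) to pass from each $\gamma_i\in\Gamma$ to $\neg\Jzero\gamma_i$, conjoin, detach via Modus Ponens on external formulas (note $\neg\Jzero\psi$ is always external, so (MP) applies by Lemma \ref{lemma: fatti basilare PWKe}-(1)), and then pass back from $\neg\Jzero\varphi$ to $\varphi$ using the same interderivability.

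For the forward direction I would argue by induction on the length of a derivation of $\varphi$ from $\Gamma$. The base cases are: $\varphi\in\Gamma$, where the required theorem is $\vdash\neg\Jzero\varphi\to\neg\Jzero\varphi$; and $\varphi$ an axiom, where $\vdash\varphi$ gives $\vdash\neg\Jzero\varphi$ (via (PWKAlg3) applied to the theorem, or directly since $\neg\Jzero$ of a theorem is derivable), and then one weakens to $\vdash\neg\Jzero\gamma_1\wedge\dots\wedge\neg\Jzero\gamma_n\to\neg\Jzero\varphi$ for any chosen $\gamma_i$. The inductive cases to handle are the deductive rules of $\MPWK$: Modus Ponens (available on external formulas, Lemma \ref{lemma: fatti basilare PWKe}-(1)), the rules ($\rho$-B13) and (PWKAlg3) inherited from the propositional calculus, and crucially the necessitation rule (N). The combination of the inductive hypotheses is then carried out inside classical logic, which is legitimate because all the formulas $\neg\Jzero\gamma_i$ and $\neg\Jzero\varphi$ are external, and external formulas obey classical reasoning in $\PWKe$ (Lemma \ref{lemma: fatti basilare PWKe}-(3),(6)).

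The main obstacle, exactly as in the Bochvar case, is the inductive step for the necessitation rule (N). Here $\varphi=\Box\psi$ with $\vdash\psi$, so by (N) we have $\vdash\Box\psi$, and hence $\vdash\neg\Jzero\Box\psi$ (as $\neg\Jzero$ of a theorem is a theorem). The point is that $\neg\Jzero\Box\psi$ is an external theorem, so by classical reasoning on external formulas we obtain $\vdash\neg\Jzero\Box\psi\to(\neg\Jzero\gamma_1\wedge\dots\wedge\neg\Jzero\gamma_n\to\neg\Jzero\Box\psi)$, and detaching by (MP) yields $\vdash\neg\Jzero\gamma_1\wedge\dots\wedge\neg\Jzero\gamma_n\to\neg\Jzero\Box\psi$, as required; one may even ignore the induction hypothesis on $\psi$ entirely, since $\Box\psi$ is already a theorem. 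This mirrors the treatment in Theorem \ref{th: teo. deduzione Bochvar modale} verbatim, with $\neg\Jzero$ in place of $\Jdue$. Given this parallel, I expect the cleanest writeup to simply note that the argument is identical to that of Theorem \ref{th: teo. deduzione Bochvar modale} once the external-formula facts of Lemma \ref{lemma: fatti basilare PWKe} are substituted for their $\B$-counterparts in Lemma \ref{lemma: fatti basilari logica B}.
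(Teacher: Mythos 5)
Your proposal is correct and follows essentially the same route as the paper's own proof: induction on derivation length with the axiom base via $\vdash\neg\Jzero\varphi$ for theorems, the (N) case handled by noting $\neg\Jzero\Box\psi$ is an external theorem that can be weakened classically and detached by restricted Modus Ponens, and the converse via $\varphi\sineq\neg\Jzero\varphi$ from Lemma \ref{lemma: fatti basilare PWKe}. Your observation that the induction hypothesis is dispensable in the (N) case is accurate, as the paper's proof invokes it only to fix the formulas $\gamma_1,\dots,\gamma_n$ before weakening.
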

\begin{proof}
($\Rightarrow$). By induction on the length of the derivation of $\varphi$ from $\Gamma$. \\
\noindent
\textit{Basis.} If $\varphi$ is an axiom ($\vdash\varphi$), then by Lemma \ref{lemma: fatti basilare PWKe} we have $\vdash\neg\Jzero\varphi$.\\
\noindent
\textit{Inductive step.} We just show the case of the rule (N). Let $\varphi=\Box\psi$, for some $\psi\in Fm$, and $\Gamma\vdash\Box\psi$, and the last deduction rule applied is (N), hence it holds $\vdash\psi$. Therefore we have $\vdash\Box\psi$, hence $\vdash\neg\Jzero\Box\psi$, by Lemma \ref{lemma: fatti basilare PWKe}. By induction hypothesis, there exist some formulas $\gamma_1,\dots,\gamma_n\in \Gamma$ such that $\vdash\neg\Jzero\gamma_1\wedge\dots\wedge\neg\Jzero\gamma_n\to \neg\Jzero\psi$. Observe that (by classical logic, Lemma \ref{lemma: fatti basilare PWKe}) $\vdash\neg\Jzero\Box\psi\to(\neg\Jzero\gamma_1\wedge\dots\wedge\neg\Jzero\gamma_n\to \neg\Jzero\Box\psi)$, hence, by modus ponens (on external formulas, see Lemma \ref{lemma: fatti basilare PWKe}), $\vdash\neg\Jzero\gamma_1\wedge\dots\wedge\neg\Jzero\gamma_n\to \neg\Jzero\Box\psi $.\\
\noindent
($\Leftarrow$). Suppose $\vdash\neg\Jzero\gamma_1\wedge\dots\wedge\neg\Jzero\gamma_n\to \neg\Jzero\varphi$. By Lemma \ref{lemma: fatti basilare PWKe}, we have $\Gamma\vdash\neg\Jzero \gamma$, $\forall\gamma\in\Gamma$, therefore $\Gamma\vdash\neg\Jzero\gamma_1\wedge\dots\wedge\neg\Jzero\gamma_n$, thus applying modus ponens (on external formulas by Lemma \ref{lemma: fatti basilare PWKe}), we get $\Gamma\vdash\neg\Jzero\varphi$. Again, by Lemma \ref{lemma: fatti basilare PWKe} we have $\Gamma\vdash\varphi$ (as $\neg\Jzero\varphi\vdash\varphi$).
\end{proof}

\begin{lemma}\label{lemma: fatti basilari logica modale PWK}
	In the logic \MPWK it holds: 
	\begin{enumerate}
		\item[] $\square(J_i\varphi\land J_k\psi) \leftrightarrow \square J_i\varphi\land\square J_k\psi$ for $i,k \in \{ 0,1,2 \}$.
	\end{enumerate}
\end{lemma}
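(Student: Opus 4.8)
The plan is to mimic exactly the strategy used in the proof of Lemma \ref{lemma: fatti basilari logica modale}-(2) for $\B^{\square}$, since the statement is the very same distributivity schema for $\square$ over conjunctions of $J$-formulas, and the only ingredients that proof relied on are available here too: the rule (N), the derived $K$-schema (BK) for external formulas (which is now Lemma \ref{lemma: K per PWK su esterne}), modus ponens on external formulas (available by Lemma \ref{lemma: fatti basilare PWKe}), and ordinary classical reasoning on external formulas. Crucially, each $J_i\varphi$ and $J_k\psi$ is an \emph{external} formula, so their conjunction, and all the classically valid implications relating them, are external; this is what permits the use of (BK) and of modus ponens restricted to external formulas throughout.

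First I would establish the left-to-right direction. By classical logic on external formulas we have $\vdash J_i\varphi\land J_k\psi\to J_i\varphi$; applying (N) gives $\vdash\square(J_i\varphi\land J_k\psi\to J_i\varphi)$, and then (BK) together with modus ponens (Lemma \ref{lemma: fatti basilare PWKe}) yields $\vdash\square(J_i\varphi\land J_k\psi)\to\square J_i\varphi$. The symmetric argument gives $\vdash\square(J_i\varphi\land J_k\psi)\to\square J_k\psi$, and combining these by classical logic on external formulas produces $\vdash\square(J_i\varphi\land J_k\psi)\to\square J_i\varphi\land\square J_k\psi$.

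For the converse direction I would start from the classical tautology $\vdash J_i\varphi\to(J_k\psi\to J_i\varphi\land J_k\psi)$, apply (N) to obtain $\vdash\square(J_i\varphi\to(J_k\psi\to J_i\varphi\land J_k\psi))$, and then invoke (BK) together with modus ponens twice to peel off the two boxed antecedents, giving $\vdash\square J_i\varphi\to(\square J_k\psi\to\square(J_i\varphi\land J_k\psi))$. A final step of classical reasoning on external formulas rewrites this as $\vdash\square J_i\varphi\land\square J_k\psi\to\square(J_i\varphi\land J_k\psi)$, and conjoining the two directions gives the biconditional.

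I do not expect any genuine obstacle, as the argument is essentially a transcription of the $\B^{\square}$ case. The one point that warrants care is that every application of modus ponens here must be licensed by the restricted form available in $\PWKe$, i.e.\ only on external formulas (Lemma \ref{lemma: fatti basilare PWKe}); since all the formulas $J_i\varphi$, $J_k\psi$, their conjunction, and the $\square$ of any external formula remain external, this restriction is never violated. Indeed the authors simply state ``It is the same of Lemma \ref{lemma: fatti basilari logica modale}'' for the analogous $K$-schema, so one may reasonably present this proof in the same compressed manner, noting only that (BK) and external modus ponens replace their unrestricted counterparts.
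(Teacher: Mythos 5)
Your proposal is correct and coincides with the paper's own argument: the paper proves this lemma by stating that the proof is identical to that of Lemma \ref{lemma: fatti basilari logica modale}(2), observing precisely your key point that all formulas involved are external, so the restricted modus ponens of $\PWKe$ (Lemma \ref{lemma: fatti basilare PWKe}) suffices alongside (N) and (BK). Your spelled-out version is simply the unabridged form of the same proof.
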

\begin{proof}
    The proof is identical to that of Lemma \ref{lemma: fatti basilari logica modale}. Observe that even though \MPWK does not possess a full modus ponens, in this proof we are dealing exclusively with external formulas for which full modus ponens holds (see Lemma \ref{lemma: fatti basilare PWKe}).
    
\end{proof}

\subsection{Semantics}
The semantics for \MPWK employs the weak three-valued Kripke models of Definition \ref{def: Kripke model}. In particular we consider the following subclass:

\begin{definition}\label{def: semantica Box per PWK}
	A \emph{PWK-Kripke} model is a weak three-valued Kripke model $\pair{W,R, v}$ such that $v$ evaluates formulas of the form $\Box\varphi$ according to:
	\begin{itemize}
		\item[(1)] $v(w,\Box\varphi)= 1$ iff $v(w,\varphi)\neq\ant$ and $v(s,\varphi)\neq0$ for every $s\in W$ such that $wRs$. 
		\item[(0)] $v(w,\Box\varphi)= 0$ iff $v(w,\varphi)\neq\ant$ and there exists $s\in W$ such that $wRs$ and $v(s,\varphi)=0$. 
		\item[($\ant$)] $v(w,\Box\varphi)= \ant$ iff $v(w,\varphi)=\ant$. 
	\end{itemize}
\end{definition}

Within this Section by model we intend PWK-Kripke model.

\begin{remark}\label{rem: semantica Diamond per PWK}
Observe that, in every model $\pair{W,R, v}$ with $w\in W$, it holds that $v(w,\diamond\varphi) = 1$ iff $v(w,\varphi)\neq\ant$ and there exists $s\in W$ such that $wRs$ and $v(s,\varphi)=1$, $v(w,\diamond\varphi) = 0$ iff $v(w,\varphi)\neq\ant$ and $v(s,\varphi)\neq1$ for every $s\in W$ such that $wRs$. 
\end{remark}

We denote by $\models^{l}_{\MPWK}$ the \emph{local} modal PWK external logic on the class of all frames obtained by taking $\{ 1,\ant \}$ as designated values, that is:

\begin{definition}\label{def: conseguenza logica pwk}
        $\Gamma \vDash^{l}_{\MPWK} \varphi$ iff for all models $\pair{W,R, v}$ and all $w\in W$, if $v(w,\gamma)\neq0, \forall\gamma\in\Gamma$, then $v(w,\varphi)\neq0$.
\end{definition}

In the following we omit both the subscript $\MPWK$ and the superscript $l$, simply writing $\vDash$ (hopefully with no danger of confusion, since we will not deal with \emph{global} modal logics in the present paper). Notice that the notion of satisfiability in $\MPWK$ differs from $\B^{\square}$, according to the difference at the propositional level between $\PWKe$ and $\B$.

\begin{definition}\label{def: soddisfacibilitÃ  e validitÃ  PWK}
        A formula $\varphi$ is satisfied (valid) in a model $\pair{W,R, v}$ if $v(w,\varphi)\neq0$, for some (all) $w\in W$.
\end{definition}

Validity in a frame and in a class of frames follow Definition \ref{def: soddisfacibilitÃ  e validitÃ   Bochvar}, with the exception that we consider only PWK-Kripke models built on a frame. 

\subsection{Completeness and Decidability}

The notion of consistency has to be changed for $\MPWK$, because the sublogic $\PWK$ is paraconsistent. Therefore we substiture consistent (and maximally consistent) sets with non-trivial ones.

\begin{remark}\label{rem: consistenza in PWK modale}
A set $\Gamma\subset Fm$ is \emph{non-trivial} if there is no formula $\varphi\in Fm$, such that $\Gamma\vdash J_i\varphi$ and $\Gamma\vdash\neg J_i\varphi$, for any $i\in \{0,1,2\}$. It is called trivial otherwise.
\end{remark}

\begin{definition}\label{rem: consistenza massimale in PWK modale}
	A non-trivial set $\Gamma$ is \emph{maximally non-trivial} whenever $\Gamma\subset \Gamma'$ implies that $\Gamma'$ is trivial.
\end{definition}

A \emph{meaningful} formula is the same of Definition \ref{def: formula sensata in un mondo}) for modal Bochvar logic, by just considering that $\vdash$ here refers to $\MPWK$ (and not to $\B^{\Box}$).

The following is the analogous of Lemma \ref{lemma: proprieta mondi massimali consistenti - 1}, for $\MPWK$ (indeed, the first claims coincide).

\begin{lemma}\label{lemma: proprieta mondi massimali consistenti PWK}
	Let $w$ be a maximally non-trivial set of formulas, then:
	\begin{enumerate}
		\item if $\neg\varphi\not\in w$ and all the variables occurring in $\varphi$ are meaningful in $w$, then $\varphi\in w$; 
		\item if all the variables of $\varphi$ are covered, then $\varphi$ is meaningful in $w$;
		\item if $\vdash\varphi$ then $\neg\varphi\not\in w$;
		\item if $\vdash\varphi$ and every variable in $\varphi$ is meaningful then $\varphi\in w$.
	\end{enumerate}
\end{lemma}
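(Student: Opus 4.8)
The statement to prove is Lemma \ref{lemma: proprieta mondi massimali consistenti PWK}, the $\MPWK$-analogue of Lemma \ref{lemma: proprieta mondi massimali consistenti - 1}. The key structural difference is that consistency has been replaced by non-triviality (Remark \ref{rem: consistenza in PWK modale}), because $\PWK$ is paraconsistent, so I must be careful that arguments phrased in terms of ``$w\vdash 0$'' for $\B^{\Box}$ are recast in terms of the $J_i$-based triviality condition. Since the paper explicitly notes that the first two claims coincide with those of Lemma \ref{lemma: proprieta mondi massimali consistenti - 1}, my plan is to cite those proofs (they live, as indicated, in \cite[Lemma 4.6]{Segerberg67} and depend only on maximality and the meaningful-formula machinery), and concentrate the real work on claims (3) and (4), which are new and genuinely sensitive to the non-falsity-preserving setting.

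\textbf{Claim (3): if $\vdash\varphi$ then $\neg\varphi\notin w$.} The plan is a proof by contradiction mirroring the $\B^{\Box}$ case but using the correct facts from Lemma \ref{lemma: fatti basilare PWKe}. Suppose $\vdash\varphi$ and, for contradiction, $\neg\varphi\in w$. From $\vdash\varphi$ and Lemma \ref{lemma: fatti basilare PWKe}(5) (namely $\vdash\neg\varphi\to\Jzero\varphi$) together with $\neg\varphi\in w$ we obtain $w\vdash\Jzero\varphi$. On the other hand, from $\vdash\varphi$ we get $\vdash\neg\Jzero\varphi$ by Lemma \ref{lemma: fatti basilare PWKe}(2) (since $\varphi\sineq\neg\Jzero\varphi$), hence $w\vdash\neg\Jzero\varphi$. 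Writing $\Jzero\varphi$ as $\Jdue\neg\varphi$, this yields $w\vdash\Jdue\neg\varphi$ and $w\vdash\neg\Jdue\neg\varphi$, which is exactly a triviality witness of the form $\Gamma\vdash J_2\chi$ and $\Gamma\vdash\neg J_2\chi$ (with $\chi=\neg\varphi$) forbidden by Remark \ref{rem: consistenza in PWK modale}. This contradicts the non-triviality of $w$, so $\neg\varphi\notin w$.

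\textbf{Claim (4): if $\vdash\varphi$ and every variable in $\varphi$ is meaningful in $w$, then $\varphi\in w$.} Here the strategy is to combine (3) with claim (1). By claim (3) we already know $\neg\varphi\notin w$. By hypothesis all variables occurring in $\varphi$ are meaningful in $w$. These are precisely the two hypotheses of claim (1), which concludes $\varphi\in w$. Thus (4) follows immediately once (1) and (3) are in hand, and needs no separate calculation.

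\textbf{Main obstacle.} The delicate point is claim (3): one must not simply transplant the $\B^{\Box}$ argument, which derived $w\vdash\neg\Jdue\varphi$ and used $\vdash\Jzero\varphi\land\Jdue\varphi\to 0$ to reach an inconsistency in the sense of $\B^{\Box}$. In $\MPWK$ that notion of inconsistency is unavailable, since deriving $0$ no longer trivializes a paraconsistent theory; the contradiction must instead be produced as a matched pair $J_i\chi,\neg J_i\chi$ in accordance with the revised triviality condition. The careful bookkeeping is to recognize $\Jzero\varphi$ as $\Jdue\neg\varphi$ so that the witnessing pair is literally of the syntactic form demanded by Remark \ref{rem: consistenza in PWK modale}; once that identification is made, the only facts required are the two items of Lemma \ref{lemma: fatti basilare PWKe} cited above.
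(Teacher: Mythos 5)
Your decomposition is exactly the paper's: (1)--(2) are inherited from the Segerberg-style machinery, (3) is to be proved by exhibiting a matched pair against Remark \ref{rem: consistenza in PWK modale}, and (4) is obtained from (1) together with (3), matching the paper's laconic ``(4) follows from the previous''. However, the step you yourself single out as the crux of (3) is a genuine gap: from $\neg\varphi\in w$ and the theorem $\vdash\neg\varphi\to\Jzero\varphi$ you cannot detach $w\vdash\Jzero\varphi$. Modus ponens in $\MPWK$ is guaranteed only when the minor premise is external (Lemma \ref{lemma: fatti basilare PWKe}(1)), and $\neg\varphi$ is not external in general; Segerberg's rule [RMP] excludes precisely this configuration, a variable open in $\neg\varphi$ and covered in $\Jzero\varphi$. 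The failure is semantic, not merely formal: at a world with $v(w,\varphi)=\ant$, both $\neg\varphi$ and $\neg\varphi\to\Jzero\varphi$ take the designated value $\ant$ while $v(w,\Jzero\varphi)=0$, so $\neg\varphi\nvdash_{\MPWK}\Jzero\varphi$. What $\neg\varphi\in w$ actually yields, via Lemma \ref{lemma: fatti basilare PWKe}(7)--(9), is $w\vdash\Jzero\varphi\lor\Juno\varphi$; combined with the (correct) $w\vdash\neg\Jzero\varphi$ coming from $\vdash\varphi$ and item (2), classical reasoning on external formulas gives only $w\vdash\Juno\varphi$, which is not a triviality witness. So as written your argument shows only that $\varphi$ is meaningless in $w$, not a contradiction.

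The gap cannot be closed without strengthening the hypothesis: take a one-world model with $v(t,p)=\ant$ and let $w=\{\psi \mid v(t,\psi)\neq 0\}$; by soundness $w$ is non-trivial, it is maximally so (for any $\psi\notin w$ one has $\Jzero\psi\in w$ while $\psi\vdash\neg\Jzero\psi$, so $w\cup\{\psi\}$ derives the matched pair $\Jzero\psi,\neg\Jzero\psi$), and it contains $\neg(p\lor\neg p)$ even though $\vdash p\lor\neg p$. Thus (3) is safe only when $\varphi$ is meaningful in $w$, which is exactly the extra hypothesis of (4): there the argument does close, since meaningfulness gives $+\varphi\in w$, i.e. $w\vdash\neg\Juno\varphi$ (the $\MPWK$ analogue of Lemma \ref{lemma: sulle formule sensate nei mondi massimali}), so $w\vdash\Juno\varphi$ would produce the matched pair $\Juno\varphi,\neg\Juno\varphi$; hence $\neg\varphi\notin w$ and claim (1) yields $\varphi\in w$. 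In fairness to you, the paper's own two-line proof of (3) is garbled by a typo (``$w\vdash\Jzero\varphi$'' appears twice where one occurrence should be $w\vdash\neg\Jzero\varphi$) and appears to rest on the same illegitimate detachment, so your reconstruction is faithful to its route; but as a self-contained derivation the step fails in the paraconsistent setting, and the proof should either build the meaningfulness proviso into (3) or argue (4) directly as above -- note that the paper's later applications (e.g. to the covered formula $\Box\neg\Jzero 1$ in the Existence Lemma \ref{lemma: di esistenza per MPWK}) all fall within the meaningful case.
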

\begin{proof}
(3). Suppose that $\vdash\varphi$ and, by contradiction, that $\neg\varphi\in w$. By Lemma \ref{lemma: fatti basilare PWKe} $\vdash\neg\Jzero\varphi$, thus $w\vdash\Jzero\varphi$. On the other hand $w\vdash\Jzero\varphi$ (by Lemma \ref{lemma: fatti basilare PWKe}). But this implies that $w$ is trivial (by Remark \ref{rem: consistenza in PWK modale}).\\
\noindent
(4) follows from the previous. 
\end{proof}

\begin{lemma}\label{lemma: se inconsistente allora ...}
Let $\Gamma$ be a non-trivial set of formulas. If $\Gamma\cup\{\varphi\}$ is trivial then $\Gamma\vdash\neg\varphi$.
\end{lemma}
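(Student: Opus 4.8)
The plan is to mirror the proof of the analogous Lemma \ref{lemma: Insieme inconsistente} for $\B^{\square}$, but carefully adapting every step to the paraconsistent setting, where triviality (rather than inconsistency) is the relevant notion and the Deduction Theorem of Theorem \ref{lemma: deduzione PWK modale} is phrased in terms of $\neg\Jzero$ rather than $\Jdue$. First I would assume that $\Gamma$ is non-trivial and that $\Gamma\cup\{\varphi\}$ is trivial. By Remark \ref{rem: consistenza in PWK modale}, triviality means there is some $\psi$ and some $i\in\{0,1,2\}$ with $\Gamma\cup\{\varphi\}\vdash J_i\psi$ and $\Gamma\cup\{\varphi\}\vdash\neg J_i\psi$; since $J_i\psi$ is an external formula, I can collapse this to the single conclusion $\Gamma\cup\{\varphi\}\vdash 0$ (using that $\Jdue\chi\land\neg\Jdue\chi\to 0$ for external $\chi$, and that external formulas enjoy full classical logic in $\PWKe$ by Lemma \ref{lemma: fatti basilare PWKe}).

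Next I would apply the Deduction Theorem \ref{lemma: deduzione PWK modale} to $\Gamma\cup\{\varphi\}\vdash 0$, obtaining formulas $\gamma_1,\dots,\gamma_n\in\Gamma$ such that $\vdash\neg\Jzero\gamma_1\wedge\dots\wedge\neg\Jzero\gamma_n\wedge\neg\Jzero\varphi\to\neg\Jzero 0$. Since $\vdash\neg\Jzero 0\to 0$ (Lemma \ref{lemma: fatti basilare PWKe}, item 4), classical reasoning on external formulas rewrites this as $\vdash\neg\Jzero\gamma_1\wedge\dots\wedge\neg\Jzero\gamma_n\to\neg\neg\Jzero\varphi$, i.e. essentially $\vdash\neg\Jzero\gamma_1\wedge\dots\wedge\neg\Jzero\gamma_n\to\Jzero\varphi$ after eliminating the double negation (all the manipulated formulas are external, so this is legitimate). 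Because $\Gamma\vdash\neg\Jzero\gamma_j$ for each $j$ (Lemma \ref{lemma: fatti basilare PWKe}, item 2, giving $\gamma\sineq\neg\Jzero\gamma$), I can conjoin them and apply modus ponens on external formulas to conclude $\Gamma\vdash\Jzero\varphi$. Finally, since $\Jzero\varphi=\Jdue\neg\varphi$ and $\varphi\sineq\neg\Jzero\varphi$ together with item 8 of Lemma \ref{lemma: fatti basilare PWKe} ($\vdash\Jdue\neg\varphi\leftrightarrow\Jzero\varphi$) let me pass between $\Jzero\varphi$ and $\neg\varphi$, I would derive $\Gamma\vdash\neg\varphi$, which is the desired conclusion. (Here I expect to use $\vdash\neg\varphi\to\Jzero\varphi$ from item 5 and its converse-direction companion to identify $\Jzero\varphi$ with $\neg\varphi$ up to interderivability in the external fragment.)

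The main obstacle I anticipate is the bookkeeping in the paraconsistent translation of ``inconsistent'' into ``trivial'': in the $\B^{\square}$ version one could freely use $\Gamma\cup\{\varphi\}\vdash 0$ and the classical collapse, but here I must be vigilant that every inference used after obtaining $\Gamma\vdash\Jzero\varphi$ stays within the external fragment, since $\PWKe$ lacks unrestricted modus ponens and the only safe detachments are those of Lemma \ref{lemma: fatti basilare PWKe} item 1. The delicate point is the transition from $\Gamma\vdash\Jzero\varphi$ back to $\Gamma\vdash\neg\varphi$: the formula $\neg\varphi$ need not be external, so I cannot simply detach; instead I rely on the interderivability $\varphi\sineq\neg\Jzero\varphi$ and the external equivalences relating $\Jzero\varphi$, $\Jdue\neg\varphi$, and $\neg\varphi$ to move the conclusion out of the external fragment, exactly as is done in the forward direction of the Deduction Theorem proof. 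Once this interface is handled correctly, the remaining computations are routine classical manipulations on external formulas.
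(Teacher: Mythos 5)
Your proposal is correct and follows essentially the same route as the paper's proof: triviality gives $\Gamma\cup\{\varphi\}\vdash 0$, the Deduction Theorem plus classical manipulation of external formulas yields $\vdash\neg\Jzero\gamma_1\wedge\dots\wedge\neg\Jzero\gamma_n\to\Jzero\varphi$, and the passage back out of the external fragment goes through $\vdash\Jzero\varphi\to\neg\Jzero\neg\varphi$ and the interderivability $\neg\varphi\sineq\neg\Jzero\neg\varphi$. The only (immaterial) difference is the endgame: the paper applies the right-to-left direction of the Deduction Theorem to conclude $\gamma_1,\dots,\gamma_n\vdash\neg\varphi$ directly, whereas you detach inside $\Gamma$ via $\Gamma\vdash\neg\Jzero\gamma_j$, adjunction, and modus ponens on external formulas --- both are legitimate and rely on the same key lemmas.
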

\begin{proof}
Let $\Gamma$ be a non-trivial set of formulas. Suppose  $\Gamma\cup\{\varphi\}$ is trivial. Therefore $\Gamma\cup\{\varphi\}\vdash 0$; by Theorem \ref{lemma: deduzione PWK modale}, there exist formulas $\gamma_{1},\dots,\gamma_{n}\in \Gamma$ s.t. $\vdash\neg\Jzero\gamma_1\wedge\dots\wedge\neg\Jzero\gamma_n\land\neg\Jzero\varphi\to\neg\Jzero 0$, where the non-triviality of $\Gamma$ assures that $\Jdue\varphi$ actually appears in the antecedent. We have $\vdash_{\PWKe}\neg\Jzero 0\leftrightarrow 0$ by Lemma \ref{lemma: fatti basilare PWKe}, hence $\vdash\neg\Jzero\gamma_1\wedge\dots\wedge\neg\Jzero\gamma_n\land	\neg\Jzero\varphi\to 0$, therefore $\vdash\neg\Jzero\gamma_1\wedge\dots\wedge\neg\Jzero\gamma_n\to\Jzero\varphi$. Since $\vdash\Jzero\varphi\to\neg\Jzero\neg\varphi$, by classical logic $\vdash\neg\Jzero\gamma_1\wedge\dots\wedge\neg\Jzero\gamma_n\to\neg\Jzero\neg\varphi$. Thus, by Theorem \ref{lemma: deduzione PWK modale}, $\gamma_1\wedge\dots\wedge\gamma_n\vdash\neg\varphi$, hence by monotonicity $\Gamma\vdash\neg\varphi$.
\end{proof}

Lindenbaum's lemma for $\MPWK$ has a slightly different form from that of $\B^{\Box}$.

\begin{lemma}[Lindenbaum's Lemma for \MPWK]\label{lemma: Lindenbaum per PWK}
	Let $\Gamma$ be a non-trivial set of formulas such that $\Gamma\not\vdash\varphi$, for some $\varphi\in Fm$, then there exists a maximally non-trivial set of formulas $w$ such that $\Gamma\subseteq w$ and $\neg\varphi\in w$.
\end{lemma}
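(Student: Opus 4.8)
The plan is to run the Lindenbaum construction of Lemma~\ref{lemma: Lindenbaum} with ``consistent'' replaced everywhere by ``non-trivial'', but adapted to the paraconsistency of $\PWKe$. Fix an enumeration $\psi_1,\psi_2,\dots$ of $Fm$, and set
\[
\Gamma_0 = \Gamma\cup\{\neg\varphi\},\qquad
\Gamma_{i+1} = \begin{cases}\Gamma_i\cup\{\psi_i\} & \text{if this set is non-trivial,}\\[2pt] \Gamma_i\cup\{\neg\psi_i\} & \text{otherwise,}\end{cases}
\qquad w=\bigcup_{i\in\mathbb{N}}\Gamma_i.
\]
Unlike the Bochvar case I would use only \emph{two} branches: since $\PWKe$ preserves non-falsity, a ``meaningless'' formula is already designated, so no separate $\Juno$-branch is needed, and in fact I will show that forcing it would trivialize $\Gamma_i$. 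The first check is that $\Gamma_0$ is non-trivial, which is exactly where the hypothesis $\Gamma\not\vdash\varphi$ enters: were $\Gamma\cup\{\neg\varphi\}$ trivial, Lemma~\ref{lemma: se inconsistente allora ...} would give $\Gamma\vdash\neg\neg\varphi$, and since $\neg\neg\varphi\sineq\varphi$ this yields $\Gamma\vdash\varphi$, a contradiction.

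The heart of the argument is the inductive step: assuming $\Gamma_i$ non-trivial, I must show that the two branches cannot both fail. This is the step I expect to be the main obstacle, precisely because paraconsistency makes the naive outcome ``$\Gamma_i\vdash\psi_i$ and $\Gamma_i\vdash\neg\psi_i$'' harmless (both hold when $\psi_i$ takes value $\ant$); one genuinely needs the \emph{falsity} operator $\Jzero$ to produce a $J_0$-contradiction. To get it I rely on a strengthening of Lemma~\ref{lemma: se inconsistente allora ...}: its proof in fact establishes that if $\Delta\cup\{\chi\}$ is trivial (with $\Delta$ non-trivial) then $\Delta\vdash\Jzero\chi$, and not merely $\Delta\vdash\neg\chi$ — the intermediate line $\vdash\bigwedge_j\neg\Jzero\gamma_j\to\Jzero\chi$ delivers this via external modus ponens (Lemma~\ref{lemma: fatti basilare PWKe}(1),(2)). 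Now if both $\Gamma_i\cup\{\psi_i\}$ and $\Gamma_i\cup\{\neg\psi_i\}$ were trivial, then $\Gamma_i\vdash\Jzero\psi_i$ and $\Gamma_i\vdash\Jzero\neg\psi_i$; since $\Jzero\neg\psi_i\sineq\Jdue\psi_i$ and $\vdash\Jdue\psi_i\to\neg\Jzero\psi_i$ (a classical validity on external formulas, Lemma~\ref{lemma: fatti basilare PWKe}(3)), I also obtain $\Gamma_i\vdash\neg\Jzero\psi_i$. As $\Jzero\psi_i=J_0\psi_i$, this gives $\Gamma_i\vdash J_0\psi_i$ and $\Gamma_i\vdash\neg J_0\psi_i$, contradicting the non-triviality of $\Gamma_i$ (Remark~\ref{rem: consistenza in PWK modale}).

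It then remains to verify that $w$ is non-trivial and maximally so. Non-triviality of $w$ is routine: any derivation witnessing triviality uses finitely many premises and so already trivializes some $\Gamma_n$, contrary to the previous paragraph. For maximality I would show every proper extension of $w$ is trivial. Given $\chi\notin w$, the formula occurs as some $\psi_i$ in the enumeration, and since it was not added at stage $i+1$ the branch $\Gamma_i\cup\{\psi_i\}$ was trivial, whence by the strengthened lemma $\Gamma_i\vdash\Jzero\chi$ and so $w\vdash\Jzero\chi$. Thus for any $\Gamma'\supseteq w$ with $\chi\in\Gamma'$ we have $\Gamma'\vdash\chi$ and $\Gamma'\vdash\Jzero\chi$; from $\chi$ one derives $\Jdue\chi\lor\Juno\chi$ (Lemma~\ref{lemma: fatti basilare PWKe}(7)), which is equivalent to $\neg\Jzero\chi$, so together with $\Jzero\chi$ this is again a $J_0$-contradiction and $\Gamma'$ is trivial. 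Finally $\neg\varphi\in\Gamma_0\subseteq w$ by construction, giving the required maximally non-trivial $w\supseteq\Gamma$ with $\neg\varphi\in w$.
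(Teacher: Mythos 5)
Your proof is correct, but it takes a genuinely different route from the paper's at the decisive inductive step. The paper retains the three-branch construction of the Bochvar case (Lemma \ref{lemma: Lindenbaum}): when both $\Gamma_i\cup\{\psi_i\}$ and $\Gamma_i\cup\{\neg\psi_i\}$ are trivial, it derives $\Gamma_i\vdash\psi_i$ and $\Gamma_i\vdash\neg\psi_i$, concludes $\Gamma_i\vdash\Juno\psi_i$, and then verifies that the fallback $\Gamma_i\cup\{\Juno\psi_i\}$ is non-trivial. You instead discard the $\Juno$-branch and show the double-failure case is impossible: your strengthening of Lemma \ref{lemma: se inconsistente allora ...} is legitimate, since its proof does pass through $\vdash\neg\Jzero\gamma_1\wedge\dots\wedge\neg\Jzero\gamma_n\to\Jzero\varphi$ before weakening to $\Gamma\vdash\neg\varphi$, and external modus ponens plus adjunction (Lemma \ref{lemma: fatti basilare PWKe}) indeed yield $\Gamma\vdash\Jzero\chi$. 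From $\Gamma_i\vdash\Jzero\psi_i$ and $\Gamma_i\vdash\Jzero\neg\psi_i$ (hence $\Gamma_i\vdash\Jdue\psi_i$, hence $\Gamma_i\vdash\neg\Jzero\psi_i$) you obtain a $J_0$-contradiction against the induction hypothesis, exactly as required by Remark \ref{rem: consistenza in PWK modale}. A noteworthy byproduct of your argument is that the paper's third branch is never actually reached --- its ``otherwise'' case is vacuous --- which crisply isolates the difference from the $\B^{\square}$ setting, where $\ant$ is undesignated and the $\Juno$-branch is essential; what the paper's version buys in exchange is strict parallelism with the Bochvar proof and no need to re-examine the inconsistency lemma.

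Two minor repairs. First, your citation of Lemma \ref{lemma: fatti basilare PWKe}(3) for $\vdash\Jdue\psi\to\neg\Jzero\psi$ is off: that formula is not a substitution instance of a classical tautology, so item (3) does not deliver it directly; it is Segerberg's axiom (A10), and in the new calculus it follows from ($\rho$-B10) by classical reasoning on external formulas. Second, your maximality argument is a needless detour: if $\chi\notin w$ then $\Gamma_i\cup\{\chi\}$ is trivial at the relevant stage, and triviality is monotone under inclusion, so any $\Gamma'\supseteq w$ with $\chi\in\Gamma'$ is trivial outright, without routing through $\Jzero\chi$ and Lemma \ref{lemma: fatti basilare PWKe}(7). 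Neither point affects correctness.
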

\begin{proof}
	Suppose $\Gamma\not\vdash\varphi$. Let $\psi_1,\psi_2,\psi_3,\dots$ be an enumeration of the formulas of $\MPWK$. We define the sets inductively:
 $$\Gamma_{0} = \Gamma \cup \{ \neg\varphi \}.$$
 \begin{equation*}
  \Gamma_{i+1} = \left \{
  \begin{aligned}
    &\Gamma_i \cup \{ \psi_i \}, && \text{if non-trivial} \\
    &\Gamma_i \cup \{ \neg\psi_i \}, && \text{if non-trivial} \\
    &\Gamma_i \cup \{ \Juno\psi_i \}, && \text{otherwise}
  \end{aligned} \right.
\end{equation*} 
$$w = \bigcup\limits_{i \in \mathbb{N}} \Gamma_i.$$

By construction $w$ is maximal, $\Gamma \subseteq w$ and $\neg\varphi\in w$. We prove the non-triviality of $w$ by induction on $n\in\mathbb{N}$. For the base step, since $\Gamma\not\vdash\varphi$, by Lemma \ref{lemma: se inconsistente allora ...} $\Gamma_{0} = \Gamma \cup \{ \neg\varphi \}$ is non-trivial. For the inductive step, suppose $\Gamma_i$ is non-trivial, while both $\Gamma_i \cup \{ \psi_i \}$ and $\Gamma_i \cup \{ \neg\psi_i \}$ are trivial. Therefore $\Gamma_{i+1} = \Gamma_i \cup \{ \Juno\psi_i \}$. By the same lemma the previous facts yield $\Gamma_i\vdash\neg\psi_i$ and $\Gamma_i\vdash\psi_i$. By Lemma \ref{lemma: fatti basilare PWKe} we obtain $\Gamma_i\vdash\Jdue\neg\psi_i\lor\Juno\neg\psi_i$ and $\Gamma_i\vdash\Jdue\psi_i\lor\Juno\psi_i$, of which the former can be rewritten by the same lemma as $\Gamma_i\vdash\Jzero\psi_i\lor\Juno\psi_i$. Since $\Jzero\psi_i,\Juno\psi_i,\Jdue\psi_i$ are pairwise contradictory, by classical logic we conclude $\Gamma_i\vdash\Juno\psi_i$. Since $\Gamma_i$ is taken as non-trivial, this implies $\Gamma_i\nvdash\neg\Juno\psi_i$. By Lemma \ref{lemma: se inconsistente allora ...}, we conclude that $\Gamma_i \cup \{ \Juno\psi_i \} = \Gamma_{i+1}$ is non-trivial.
\end{proof}

\begin{definition}\label{def: relazione canonica per MPWK}
	Let $\mathcal{W}$ be the set of all maximally non-trivial set of formulas. Then the \emph{canonical relation} $\mathcal{R}\subset \mathcal{W}\times \mathcal{W}$ for \MPWK is defined, for every $w,s\in \mathcal{W}$ as: 
	$$w\mathcal{R} s \text{ iff }\forall\varphi\in Fm \text{ s.t. } \Box\varphi\in w \text{ then } \neg\varphi\notin s. $$	
\end{definition}

\begin{lemma}[Existence Lemma]\label{lemma: di esistenza per MPWK}
	For every maximally non-trivial set of formulas $w$ such that $\diamond\varphi\in w$ (for some $\varphi\in Fm$) then $\varphi$ is meaningful in $w$ and there exists a maximally non-trivial set $s\in\mathcal{W}$ such that $w\mathcal{R}s$ and $\varphi\in s$. 
\end{lemma}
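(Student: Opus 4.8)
The plan is to mirror the proof of the Existence Lemma for $\B^{\square}$ (Lemma \ref{lemma: di esistenza}), transposing every step to the non-falsity reading of $\square$ and to the paraconsistent base $\PWKe$. Fix $w$ maximally non-trivial with $\diamond\varphi\in w$. Following the Bochvar construction I would work with the box-pushed designatedness set
\[ s^{-}=\{\neg\Jzero\psi\mid\square\neg\Jzero\psi\in w\}, \]
the natural analogue of $\{\Jdue\psi\mid\square\Jdue\psi\in w\}$, now built from the transformer $\neg\Jzero(\cdot)$ witnessing designatedness in $\PWKe$. As in the Bochvar case $s^{-}\neq\emptyset$: from $\vdash\psi$ one gets $\vdash\neg\Jzero\psi$ (Lemma \ref{lemma: fatti basilare PWKe}), hence $\vdash\square\neg\Jzero\psi$ by (N), so $\square\neg\Jzero\psi\in w$ by Lemma \ref{lemma: proprieta mondi massimali consistenti PWK}. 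The goal is then to show $s^{-}\cup\{\varphi\}$ is non-trivial, extend it by Lindenbaum, and read off the relation.

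The core step is the non-triviality of $s^{-}\cup\{\varphi\}$, which I would prove by contradiction. If it were trivial then $s^{-}\cup\{\varphi\}\vdash 0$, and the Deduction Theorem (Theorem \ref{lemma: deduzione PWK modale}) supplies $\neg\Jzero\psi_{1},\dots,\neg\Jzero\psi_{n}\in s^{-}$ with $\vdash\neg\Jzero\psi_{1}\wedge\dots\wedge\neg\Jzero\psi_{n}\wedge\neg\Jzero\varphi\to\neg\Jzero 0$. Since each conjunct is external (so $\neg\Jzero\neg\Jzero\psi_{i}\sineq\neg\Jzero\psi_{i}$) and $\neg\Jzero 0\sineq 0$, classical reasoning on external formulas rewrites this as $\vdash\neg\Jzero\psi_{1}\wedge\dots\wedge\neg\Jzero\psi_{n}\to\Jzero\varphi$; applying (N), (BK) and the box--conjunction distribution of Lemma \ref{lemma: fatti basilari logica modale PWK}, then modus ponens on the external boxed conjuncts, yields $\square\Jzero\varphi\in w$. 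Using $\vdash\Jzero\varphi\to\neg\Jdue\varphi$ and (P2) (to replace $\square\neg\Jdue\varphi$ by $\square\neg\varphi$) I obtain $\square\neg\varphi\in w$, i.e. $\neg\diamond\varphi\in w$. Now, once the meaningfulness of $\varphi$ in $w$ is in hand (so that $+\diamond\varphi\in w$), the facts $\diamond\varphi\vdash\Jdue\diamond\varphi\vee\Juno\diamond\varphi$ and $\neg\diamond\varphi\vdash\Jzero\diamond\varphi\vee\Juno\diamond\varphi$ (Lemma \ref{lemma: fatti basilare PWKe}, via $\Jdue\neg\chi\sineq\Jzero\chi$ and $\Juno\chi\sineq\Juno\neg\chi$) let me discard the $\Juno$-disjuncts using $+\diamond\varphi$, placing both $\Jdue\diamond\varphi$ and $\Jzero\diamond\varphi$ in $w$; by $\vdash\Jdue\chi\to\neg\Jzero\chi$ these are contradictory \emph{external} formulas, against the non-triviality of $w$. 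Lindenbaum's Lemma for $\MPWK$ (Lemma \ref{lemma: Lindenbaum per PWK}) then extends $s^{-}\cup\{\varphi\}$ to a maximally non-trivial $s$ with $\varphi\in s$.

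It remains to check $w\mathcal{R}s$ in the sense of Definition \ref{def: relazione canonica per MPWK}. Given $\square\chi\in w$, axiom (P2) gives $\square\neg\Jzero\chi\in w$, so $\neg\Jzero\chi\in s^{-}\subseteq s$; as $\neg\Jzero\chi$ is external and $s$ non-trivial, $\neg\chi\in s$ would force $s\vdash\Jzero\chi$ alongside $s\vdash\neg\Jzero\chi$, an external contradiction, so $\neg\chi\notin s$, which is exactly $w\mathcal{R}s$. For the meaningfulness of $\varphi$ used above, I would exploit (P3): together with $\Juno\chi\sineq\Juno\neg\chi$ and $\diamond\varphi=\neg\square\neg\varphi$ it yields the external equivalence $+\varphi\leftrightarrow+\diamond\varphi$, reducing meaningfulness (through the $\MPWK$ form of Lemma \ref{lemma: sulle formule sensate nei mondi massimali}) to the membership $+\diamond\varphi\in w$.

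The main obstacle I anticipate is precisely the paraconsistency of $\PWKe$. Unlike in $\B^{\square}$, the superficially damning situation ``$\diamond\varphi$ and $\neg\diamond\varphi$ both in $w$'' is \emph{harmless}, since a maximally non-trivial set may contain a formula together with its negation exactly when the underlying value is the infectious $\ant$. Every contradiction must therefore be driven into the external fragment, where $\PWKe$ behaves classically, and this in turn forces one to rule out the $\ant$-case at $w$ \emph{before} drawing the $J_{i}$-level contradiction; getting the order of the meaningfulness step and the non-triviality step right is delicate, and the membership $+\diamond\varphi\in w$ (equivalently, excluding $\Juno\diamond\varphi\in w$) is the real crux, as it is what separates genuine truth from mere non-falsity. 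A secondary, pervasive care-point is that modus ponens may be invoked only when the minor premise is external (every such application above has an external, hence non-$\ant$, antecedent such as $\square\Jzero\varphi$ or $\neg\Jzero\chi$), so that the restricted rule of $\PWKe$ is licensed.
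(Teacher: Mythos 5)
Your overall architecture tracks the paper's proof (Deduction Theorem, then (N), (BK) and box-distribution via Lemma \ref{lemma: fatti basilari logica modale PWK}, then (P2), then Lindenbaum, then the relation check), but your choice of $s^{-}$ deviates in a way that breaks the final step. The paper takes $s^{-}=\{\psi \mid \Box\neg\Jzero\psi\in w\}$ --- the formulas themselves, not their $\neg\Jzero$-images --- so that from $\Box\chi\in w$ one gets, via (P2), $\chi\in s^{-}\subseteq s$, and the maximality of $s$ is then invoked to exclude $\neg\chi\in s$. With your $s^{-}=\{\neg\Jzero\psi\mid\Box\neg\Jzero\psi\in w\}$ you only obtain $\neg\Jzero\chi\in s$, and your argument that this excludes $\neg\chi\in s$ contains a step that is outright false in $\PWKe$: ``$\neg\chi\in s$ would force $s\vdash\Jzero\chi$'' requires detaching the theorem $\neg\chi\to\Jzero\chi$ on the non-external minor premise $\neg\chi$, which [RMP] forbids (every variable of $\chi$ open in $\neg\chi$ is covered in $\Jzero\chi$), and the entailment $\neg\chi\vdash_{\PWKe}\Jzero\chi$ fails semantically: at $v(\chi)=\ant$ the premise $\neg\chi$ is designated while $\Jzero\chi=0$. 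Concretely, $\neg\Jzero\chi$ only says ``$\chi$ is non-false'', which is compatible with $\chi$ taking the value $\ant$, hence with the Lindenbaum extension adding both $\Juno\chi$ and $\neg\chi$ to $s$; what one actually derives from $\neg\Jzero\chi$ and $\neg\chi$ is $\Juno\chi$, no contradiction at all. This is exactly why the paper stores $\psi$ rather than $\neg\Jzero\psi$ in $s^{-}$.

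The second gap is the one you yourself flag: your triviality argument for $w$ is conditional on $+\diamond\varphi\in w$, which you never derive --- and it cannot be obtained from $\diamond\varphi\in w$ by detachment, since [RMP] blocks $\diamond\varphi\vdash\Jdue\diamond\varphi$ (Lemma \ref{lemma: fatti basilare PWKe} only yields $\Jdue\diamond\varphi\lor\Juno\diamond\varphi$). The paper does not route the contradiction through meaningfulness at all: having derived $\Box\neg\Jzero\neg\varphi\in w$ and then $\Box\neg\varphi\in w$ by (P2), it sets $\Box\neg\varphi\in w$ directly against $\neg\Box\neg\varphi=\diamond\varphi\in w$ and appeals to the maximal non-triviality of $w$ (in effect, the trichotomy that a maximal set contains exactly one of $\psi$, $\neg\psi$, $\Juno\psi$, as in Definition \ref{def: insieme massimale consistente}), rather than first securing $+\diamond\varphi\in w$. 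Meaningfulness is then read off at the end from $\varphi\in s$ (note, incidentally, that the paper's own proof establishes meaningfulness of $\varphi$ in $s$, not in $w$ as the statement demands). So your instinct that the $\ant$-case is the delicate point is sound, but as written your proof leaves the central contradiction hanging on an unproven membership claim, and its relation check rests on an entailment that fails in $\PWKe$.
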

\begin{proof}
    Let $\diamond\varphi\in w$ and consider the set
$$ s^{-} = \{\psi \;|\; \Box\neg\Jzero\psi\in w\}. $$
Observe that $s^-\neq\emptyset$, in fact by Lemma \ref{lemma: fatti basilare PWKe}, $\vdash\neg\Jzero 1$, therefore by (N) $\vdash\Box\neg\Jzero 1$. Since $w$ is maximally non-trivial, $\Box\neg\Jzero 1\in w$, hence $1\in s^-$. Suppose by contradiction that $s^-$ is trivial. Therefore $s^-$ derives every formula, in particular $s^-\vdash\neg\varphi$. By Theorem \ref{lemma: deduzione PWK modale}, there are $\psi_1,\dots,\psi_n\in s^-$ s.t. $\vdash\neg\Jzero\psi_1\land\dots\land\neg\Jzero\psi_n\to\neg\Jzero\neg\varphi$. By (N) $\vdash\Box(\neg\Jzero\psi_1\land\dots\land\neg\Jzero\psi_n\to\neg\Jzero\neg\varphi)$, and using (BK) and modus ponens (since the formulas considered here are external) we get $\vdash\Box(\neg\Jzero\psi_1\land\dots\land\neg\Jzero\psi_n)\to\Box\neg\Jzero\neg\varphi$. Now Lemma \ref{lemma: fatti basilari logica modale PWK} can be generalized to $\vdash\Box\neg\Jzero\psi_1\land\dots\land\Box\neg\Jzero\psi_n\to\Box(\neg\Jzero\psi_1\land\dots\land\neg\Jzero\psi_n)$, obtaining, by transitivity, $\vdash\Box\neg\Jzero\psi_1\land\dots\land\Box\neg\Jzero\psi_n\to\Box\neg\Jzero\neg\varphi$. Observe that $\Box\neg\Jzero\psi_1,\dots,\Box\neg\Jzero\psi_n\in w$, therefore $\Box\neg\Jzero\neg\varphi\in w$ (as $w$ is maximally non-trivial). By (P2) $\Box\neg\varphi\in w$, which can be rewritten as $\neg\diamond\varphi\in w$, contradicting the non-triviality of $w$. 

Notice that we have also proved that in particular $s^-\nvdash\neg\varphi$. We can now apply Lindembaum's lemma and extend $s^-$ to a maximally non-trivial $s\supseteq s^-$ s.t. $\varphi\in s$, hence $\varphi$ is meaningful in $s$. Finally we show that $w\mathcal{R}s$ according to the canonical relation: let for arbitrary $\chi\in Fm,\Box\chi\in w$, then by (P2) $\Box\neg\Jzero\chi\in w$, so $\chi\in s^-\subseteq s$ and by non-triviality $\neg\chi\notin s$.
\end{proof}

We now adapt the definition of canonical model to \MPWK:
\begin{definition}
\label{def: modello canonico MPWK}
	The \emph{canonical model} for \MPWK is a weak three-valued Kripke model \\
	\noindent
	$\mathcal{M}=\pair{\mathcal{W},\mathcal{R}, v}$ where $\mathcal{W}$ is the set of all maximally non-trivial sets of formulas, $\mathcal{R}$ is the canonical relation for \MPWK and $v$ is defined as follows: 
	\begin{itemize}
		\item $v(w,x)=1$ if and only if $x\in w$;
		\vspace{2pt}
		\item $v(w,x)=0$ if and only if $\neg x\in X$;
		\vspace{2pt}
		\item $v(w,x)=\ant$ if and only if $\Juno x\in w$, 
	\end{itemize}
	for every $w\in\mathcal{W}$ and propositional variable $x$.
\end{definition}

\begin{lemma}[Truth Lemma]\label{lemma: truth lemma per MPWK}
	Let $\mathcal{M}=\pair{\mathcal{W},\mathcal{R}, v}$ be the canonical model. Then, for every formula $\varphi\in Fm$ and every $w\in\mathcal{W}$, the following hold: 
	
	\begin{enumerate}
		\item $v(w,\varphi)=1$ if and only if $\varphi\in w$;
		\vspace{2pt}
		\item $v(w,\varphi)=0$ if and only if $\neg \varphi\in w$;
		\vspace{2pt}
		\item $v(w,\varphi)=\ant$ if and only if $\Juno\varphi\in w$.
	\end{enumerate}
\end{lemma}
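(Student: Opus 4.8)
The plan is to prove all three clauses simultaneously by induction on the length of $\varphi$, following the structure of the Truth Lemma for $\B^{\Box}$ (Lemma \ref{lemma: truth lemma}) but replacing maximally consistent sets by maximally non-trivial ones, the Bochvar-Kripke clauses by those of Definition \ref{def: semantica Box per PWK}, the canonical relation by that of Definition \ref{def: relazione canonica per MPWK}, and Lemma \ref{lemma: fatti basilari logica B} by Lemma \ref{lemma: fatti basilare PWKe}. The base case for variables is immediate from the canonical valuation in Definition \ref{def: modello canonico MPWK}, and the $\neg$, $\lor$ and $\Jdue$ cases reduce to the membership behaviour of these connectives in a maximally non-trivial set, that is, to the analogue for \MPWK of Lemma \ref{lemma: proprieta mondi massimali consistenti - 2}; here one must keep track of meaningfulness (a disjunct lying in $w$ does not by itself place the whole disjunction in $w$ unless the other disjunct is meaningful), but this is routine. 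So I would concentrate, exactly as in the Bochvar case, on the inductive step $\varphi=\Box\psi$.

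For clause (1), the direction $(\Leftarrow)$ starts from $\Box\psi\in w$: since (by the analogue for \MPWK of Lemma \ref{lemma: sulle formule sensate nei mondi massimali}) a formula belonging to a maximally non-trivial set is meaningful there, $\Box\psi$ is meaningful in $w$, and hence so is $\psi$, giving $v(w,\psi)\neq\ant$ by the induction hypothesis on clause (3). Moreover, for every $s$ with $w\mathcal{R}s$ the canonical relation of Definition \ref{def: relazione canonica per MPWK} yields $\neg\psi\notin s$, whence $v(s,\psi)\neq 0$ by the induction hypothesis on clause (2); these are precisely the conditions of Definition \ref{def: semantica Box per PWK} for $v(w,\Box\psi)=1$. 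For $(\Rightarrow)$, I assume $v(w,\Box\psi)=1$ and, towards a contradiction, $\Box\psi\notin w$. From $v(w,\psi)\neq\ant$ it follows that $\psi$, and therefore $\Box\psi$, is meaningful in $w$, so $\Juno\Box\psi\notin w$ and maximality forces $\neg\Box\psi\in w$; since $\neg\Box\psi\sineq\diamond\neg\psi$ (up to the provable equivalence $\neg\neg\psi\equiv\psi$), the Existence Lemma \ref{lemma: di esistenza per MPWK} supplies an $s$ with $w\mathcal{R}s$ and $\neg\psi\in s$, hence $v(s,\psi)=0$ by the induction hypothesis, contradicting $v(w,\Box\psi)=1$.

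Clause (3) is read off directly: by the $\ant$-clause of Definition \ref{def: semantica Box per PWK} one has $v(w,\Box\psi)=\ant$ iff $v(w,\psi)=\ant$, which by the induction hypothesis is $\Juno\psi\in w$, and this coincides with $\Juno\Box\psi\in w$ because (P3), namely $\neg\Juno\psi\leftrightarrow\neg\Juno\Box\psi$, makes the external formulas $\Juno\psi$ and $\Juno\Box\psi$ provably equivalent. Clause (2) can then be obtained either directly, as a mirror of clause (1) (if $\neg\Box\psi\in w$ then $\diamond\neg\psi\in w$ and the Existence Lemma furnishes a successor $s$ with $v(s,\psi)=0$; conversely such a successor together with the canonical relation excludes $\Box\psi\in w$, while meaningfulness excludes $\Juno\Box\psi\in w$), or deduced from clauses (1) and (3) by observing that the values $\{1,0,\ant\}$ and the three memberships $\Box\psi\in w$, $\neg\Box\psi\in w$, $\Juno\Box\psi\in w$ both partition the situation for a maximally non-trivial $w$.

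The main obstacle I anticipate is the paraconsistent bookkeeping around meaningfulness. Unlike the Bochvar case I cannot use consistency to discard a value, and, crucially, full modus ponens fails in \MPWK (only modus ponens on external formulas is available, Lemma \ref{lemma: fatti basilare PWKe}(1)), so I cannot freely pass from $\Box\psi$ to $\Jdue\Box\psi$ or $+\Box\psi$ by an implication and detachment. The transfer of meaningfulness between $\psi$ and $\Box\psi$ must therefore be routed through axiom (P3) and the $+$-connective together with the lemmas on maximally non-trivial sets (Lemma \ref{lemma: proprieta mondi massimali consistenti PWK} and the analogue for \MPWK of Lemma \ref{lemma: sulle formule sensate nei mondi massimali}). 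Verifying that these analogues genuinely hold for maximally non-trivial sets, and that such sets still admit the clean three-way partition of each formula's status, is where the real care lies; once this is in place, the modal step goes through exactly as sketched above.
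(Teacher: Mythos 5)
Your proposal is correct and takes essentially the same route as the paper's proof: induction on $\varphi$ with the work concentrated on the case $\varphi=\Box\psi$, using the Existence Lemma \ref{lemma: di esistenza per MPWK} for the left-to-right direction of clause (1) and the canonical relation together with meaningfulness transfer between $\psi$ and $\Box\psi$ for the converse. The only difference is cosmetic: where you route the passage from $\Box\psi\in w$ to $+\Box\psi\in w$ through axiom (P3), the paper instead observes that $\vdash_{\PWKe}\varphi\to\Jdue\varphi$ and $\vdash_{\PWKe}\Jdue\varphi\to+\varphi$ and that maximally non-trivial sets are closed under \emph{unrestricted} modus ponens, so your worry about the failure of full modus ponens in \MPWK is legitimate but is resolved soundly by either device.
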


\begin{proof}
	By induction on the length of the formula $\varphi$. We just show (1) for the inductive step when $\varphi=\Box\psi$, for some $\psi\in Fm$.\\
	\noindent
	Observe that $v(w,\Box\psi) = 1$ iff $\psi$ is meaningful in $w$ (i.e. $v(w,\psi)\neq\ant$) and $\forall s$ s.t. $w\mathcal{R}s$, $v(s,\psi) \neq 0$, thus, by induction hypothesis, iff $\Juno\psi\not\in w$ and $\neg\psi\notin s$ $\forall s$ s.t. $w\mathcal{R}s$. \\
	\noindent
	$(\Rightarrow) $ Suppose, by contradiction, that $v(w,\Box\psi) = 1$ but $\Box\psi\not\in w$, hence, by maximality of $w$, $\Juno\Box\psi\in w$ or $\neg\Box\psi\in w$. Since $\psi$ is meaningful in $w$, so is $\Box\psi$ (Lemma \ref{lemma: proprieta mondi massimali consistenti PWK}), thus $\Juno\Box\psi\not\in w$. So, $\neg\Box\psi\in w$, i.e. $\diamond\neg\psi\in w$, hence, by Existence Lemma \ref{lemma: di esistenza per MPWK}, there exists $s'\in\mathcal{W}$ such that $w\mathcal{R}s'$ such that $\neg\psi\in s'$, which implies, by induction hypothesis, that $v(s',\psi) = 0$, a contradiction.\\
	\noindent
	$(\Leftarrow)$ The proof is similar to one of the Truth Lemma \ref{lemma: truth lemma} for $\B^\square$. Observe that $\vdash_{\PWKe} \varphi\to\Jdue\varphi$ and $\vdash_{\PWKe} \Jdue\varphi\to+\varphi$, moreover maximally non-trivial sets are closed under unrestricted modus ponens.
\end{proof}

\begin{theorem}[Completeness]\label{th: completezza MPWK}
	$\Gamma\vdash_{\MPWK}\varphi$ if and only if $ \Gamma\models^{l}_{\MPWK}\varphi$.
\end{theorem}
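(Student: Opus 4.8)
The plan is to mirror the completeness proof for $\B^{\square}$ (Theorem \ref{th: completezza Bochvar modale}), replacing truth-preservation by non-falsity-preservation throughout, since all the machinery for $\MPWK$ (Lindenbaum's Lemma \ref{lemma: Lindenbaum per PWK}, the Existence Lemma \ref{lemma: di esistenza per MPWK}, the Truth Lemma \ref{lemma: truth lemma per MPWK}, and the canonical model of Definition \ref{def: modello canonico MPWK}) has already been established. The two directions are soundness and completeness.

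\begin{proof}
$(\Rightarrow)$ \emph{Soundness.} I would verify that each axiom of $\MPWK$ is valid on every PWK-Kripke model and that each deductive rule preserves the property of being satisfied (i.e. taking a value $\neq 0$) at every world. The propositional axioms ($\rho$-B1)--($\rho$-B12) are sound because they are sound for $\PWKe$ and the non-modal connectives are evaluated exactly as in $\PWKe$ at each world. For the modal axioms, (P1) and (P3) are routine computations using Definition \ref{def: semantica Box per PWK} and the interpretation of $+$, while (P2), $\Box\varphi\leftrightarrow\Box\neg\Jzero\varphi$, reflects precisely the fact that the clause for $v(w,\Box\varphi)$ depends only on whether the successors take value $0$, which is exactly the information recorded by $\neg\Jzero\varphi$. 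Preservation of non-falsity by ($\rho$-B13), (PWKAlg3) and the necessitation rule (N) is then checked directly; for (N) one uses that if $\varphi$ is valid (value $1$) at every world of every model, then $\Box\varphi$ takes value $1$ too.

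$(\Leftarrow)$ \emph{Completeness.} I would argue by contraposition. Suppose $\Gamma\not\vdash_{\MPWK}\varphi$. Then $\Gamma$ is non-trivial (in the sense of Remark \ref{rem: consistenza in PWK modale}), so by the Lindenbaum Lemma \ref{lemma: Lindenbaum per PWK} there is a maximally non-trivial set $w$ with $\Gamma\subseteq w$ and $\neg\varphi\in w$. Build the canonical model $\mathcal{M}=\pair{\mathcal{W},\mathcal{R},v}$ of Definition \ref{def: modello canonico MPWK}. By the Truth Lemma \ref{lemma: truth lemma per MPWK}, for each $\gamma\in\Gamma$ we have $\gamma\in w$, hence $v(w,\gamma)=1\neq 0$; and from $\neg\varphi\in w$ the Truth Lemma gives $v(w,\varphi)=0$. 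Thus $w$ is a world satisfying all of $\Gamma$ while falsifying $\varphi$, witnessing $\Gamma\not\models^{l}_{\MPWK}\varphi$.

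The substantive work has already been discharged in the preceding lemmas, so the anticipated obstacle is not in the proof itself but in ensuring the designated-value bookkeeping is correct: unlike the $\B^{\square}$ case, where $\Gamma\not\vdash\varphi$ is witnessed by $v(s,\varphi)\neq 1$, here I must produce a world where $\varphi$ is \emph{false} (value $0$), not merely non-true, which is why Lindenbaum's Lemma for $\MPWK$ is stated so as to guarantee $\neg\varphi\in w$ rather than merely $\varphi\notin w$. The soundness direction requires care that the restricted rule structure of $\MPWK$ (which lacks unrestricted modus ponens) still only ever applies modus ponens to external formulas, for which it is admissible by Lemma \ref{lemma: fatti basilare PWKe}; this is exactly the point flagged in Lemma \ref{lemma: fatti basilari logica modale PWK}. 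Once these bookkeeping points are respected, the canonical-model construction transfers verbatim from the Bochvar case.
\end{proof}
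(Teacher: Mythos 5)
Your proof is correct and follows essentially the same route as the paper's: soundness by direct verification, and completeness by contraposition via the Lindenbaum Lemma \ref{lemma: Lindenbaum per PWK} (which supplies a maximally non-trivial $w\supseteq\Gamma$ with $\neg\varphi\in w$) and the Truth Lemma \ref{lemma: truth lemma per MPWK}, yielding a canonical-model world where every $\gamma\in\Gamma$ takes value $1$ while $\varphi$ takes value $0$. Your closing remark about the designated-value bookkeeping — needing $\neg\varphi\in w$ rather than mere $\varphi\notin w$ — correctly identifies the one point where the argument diverges from the $\B^{\square}$ case, and your citation of the $\MPWK$ Truth Lemma is in fact more precise than the paper's own reference.
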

\begin{proof}
	$(\Rightarrow)$ It is easily checked that all the axioms are sound and the rules preserve soundness.\\
	\noindent
	$(\Leftarrow)$ Suppose $\Gamma\not\vdash\varphi$. Then $\Gamma$ is a non-trivial set of formulas, therefore, by the Lindenbaum Lemma \ref{lemma: Lindenbaum per PWK}, there exist a maximally non-trivial set $s$ such that $\Gamma\subseteq s$ and $\neg\varphi\in s$, hence, by Truth Lemma \ref{lemma: truth lemma}, there exists a canonical countermodel, namely $(\mathcal{W},\mathcal{R},v)$, with $v(s,\gamma) = 1$ for all $\gamma\in\Gamma$ and $v(s,\varphi) = 0$.
\end{proof}

The decidability of $\MPWK$ is proved similarly to the case for $\B^\square$. Once again we make use of filtrations. The notion of closure under subformulas is the same given in Definition \ref{def.: chiusura per sottoformule}. The only essential change concerns the equivalence relation that induces the partition.

\begin{definition}
	Let $\pair{W,R, v}$ be a model and $\Sigma$ be a finite set of formulas closed under subformulas. This set induces an equivalence relation over $W$ such that: $w \equiv_\Sigma s$ iff $\forall \varphi\in\Sigma (v(w,\varphi)=1$ iff $v(s,\varphi)\neq0)$.    
\end{definition}

Again, when there is no risk of confusion we omit the reference set $\Sigma$ and denote the equivalence class $[w]_{\equiv_\Sigma}$ as $[w]$. We can adopt Definition \ref{def.: filtrazione} for filtration.

\begin{theorem}\label{th: filtrations for pwk}
	Let $\pair{W^f,R^f, v^f}$ be a filtration of $M = \pair{W,R, v}$ through $\Sigma$. For all $\varphi \in \Sigma, w \in W$, it holds $v(w,\varphi)\neq0$ iff $v^f([w],\varphi)\neq0$.
\end{theorem}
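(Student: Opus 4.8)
The plan is to prove the statement by induction on the complexity of $\varphi\in\Sigma$, following the same pattern as the Bochvar filtration lemma (Lemma \ref{lemma: filtrations}) but systematically replacing the designated condition ``$=1$'' by the $\PWKe$ one, ``$\neq 0$'', and reading the modal clause off the $\PWK$-Kripke semantics of Definition \ref{def: semantica Box per PWK}. The essential point to keep in mind throughout is that every connective is interpreted in $\WKt$, so $\ant$ behaves infectiously: the value of a compound is not determined by the mere non-falsity of its immediate subformulas, but also by their \emph{meaningfulness}, i.e.\ by whether they equal $\ant$. The inductive argument must therefore carry, alongside the non-falsity clause, the information ``$v(w,\psi)\neq\ant$ iff $v^f([w],\psi)\neq\ant$''. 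This is exactly what the closure conditions of Definition \ref{def.: chiusura per sottoformule} secure: whenever $\square\psi\in\Sigma$ one also has $\psi\in\Sigma$ and $+\psi\in\Sigma$, and since $+\psi$ takes values in $\{0,1\}$ with $+\psi\neq 0$ iff $\psi\neq\ant$, applying the non-falsity clause to $+\psi$ transports meaningfulness through the filtration.

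First I would dispose of the base case and the propositional and $\Jdue$ cases. For a variable $p\in\Sigma$ the claim is immediate from the definition of $v^f$ (Definition \ref{def.: filtrazione}) together with the fact that $\equiv_\Sigma$ identifies precisely the worlds agreeing on the designated ($\neq 0$) value of every formula of $\Sigma$. For $\neg,\lor,\land$ and $\Jdue$, the value of the compound is computed from the values of its immediate subformulas via the tables of $\WKt$; since these subformulas lie in $\Sigma$ by closure, the induction hypothesis in the strengthened form (carrying both non-falsity and meaningfulness) settles each case exactly as in the proof of Lemma \ref{lemma: filtrations}.

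The modal case $\varphi=\square\psi$ is where the real work lies, and I would split on meaningfulness. If $v(w,\psi)=\ant$, then $v(w,\square\psi)=\ant\neq 0$ by Definition \ref{def: semantica Box per PWK}; applying the induction hypothesis to $+\psi\in\Sigma$ yields $v^f([w],\psi)=\ant$, whence $v^f([w],\square\psi)=\ant\neq 0$, and the converse is symmetric. If instead $v(w,\psi)\neq\ant$, then non-falsity of $\square\psi$ reduces, by Definition \ref{def: semantica Box per PWK}, to ``$v(s,\psi)\neq 0$ for every $R$-successor $s$ of $w$'', and one runs the standard filtration argument. For the left-to-right direction, given $v(w,\square\psi)\neq 0$ and an arbitrary $R^f$-successor $[u]$ of $[w]$, one unfolds $[w]R^f[u]$ into witnesses $w'Rs'$ with $w'\equiv_\Sigma w$ and $s'\equiv_\Sigma u$; using that $\square\psi,\psi,+\psi$ all belong to $\Sigma$ one transports the non-falsity of $\square\psi$ and the meaningfulness of $\psi$ from $w$ to $w'$, deduces $v(s',\psi)\neq 0$, transports this to $u$ along $\equiv_\Sigma$, and concludes by the induction hypothesis on $\psi$ that $v^f([u],\psi)\neq 0$. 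For the right-to-left direction, any $R$-successor $s$ of $w$ gives $[w]R^f[s]$ at once from Definition \ref{def.: filtrazione}, so non-falsity of $\square\psi$ in the filtration descends to $M$. The $\diamond$ clause then follows from Remark \ref{rem: semantica Diamond per PWK} by duality.

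The main obstacle is precisely the bookkeeping of infectiousness in the modal step: one must guarantee that meaningfulness ($\neq\ant$), and not only non-falsity, is preserved when passing between a world and its $R^f$-successors, and this is what forces the strengthened induction hypothesis and the presence of $+\varphi$ in the subformula closure. Once Theorem \ref{th: filtrations for pwk} is established, it produces a finite $\PWK$-Kripke countermodel for any non-theorem, exactly as in the passage from Lemma \ref{lemma: filtrations} to Corollary \ref{th: Decidibilita} in the $\B^{\square}$ case, thereby yielding the decidability of $\MPWK$.
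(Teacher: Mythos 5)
Your proof is correct and takes essentially the same route as the paper's: induction on the complexity of $\varphi\in\Sigma$, with the Boolean and $\Jdue$ cases handled as in Lemma \ref{lemma: filtrations} and the modal case split according to whether $v(w,\psi)=\ant$, the closure condition putting $+\psi$ in $\Sigma$ being what lets meaningfulness pass through the filtration. The only difference is presentational: you make explicit the strengthened induction hypothesis (agreement on the value $\ant$, hence on all three values, not merely on non-falsity) that the paper uses tacitly when it writes ``$v(w,\psi)=\ant$ iff (by induction hypothesis) $v^f([w],\psi)=\ant$'', which is a point in your favour, since the bare ``$\neq 0$'' statement is not inductively self-sufficient (e.g.\ the $\neg\psi$ case needs agreement on the value $1$ of $\psi$).
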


\begin{proof}
	By induction on the complexity of $\varphi \in \Sigma$. The Boolean cases and the case for $\varphi = \Jdue\psi$ follows Lemma \ref{lemma: filtrations}.
	
	Let $\varphi = \square\psi$, for some $\psi\in Fm$. When $v(w,\square\psi)=1$ the proof is similar to the case covered in Lemma \ref{lemma: filtrations}. Suppose $v(w,\square\psi)=\ant$: this holds iff $v(w,\psi)=\ant$ iff (by induction hypothesis) $v([w],\psi)=\ant$ iff $v([w],\square\psi)=\ant$.
\end{proof}

In accordance with the notion of satisfiability in \MPWK we reobtain the main theorem:

\begin{theorem}
	If a formula $\varphi$ is satisfiable in a model, it is satisfiable in a finite model.
\end{theorem}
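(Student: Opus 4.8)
The plan is to mirror the filtration argument used for $\B^\square$ in Theorem \ref{th.: filtrazioni soddisfacilitÃ }, replacing the Bochvar notion of satisfiability ($v(w,\varphi)=1$) with the $\MPWK$ one ($v(w,\varphi)\neq 0$) and invoking the $\MPWK$ filtration result in place of the Bochvar one. First I would assume that $\varphi$ is satisfiable, so that there is a PWK-Kripke model $M=\pair{W,R,v}$ and a world $w\in W$ with $v(w,\varphi)\neq 0$. I then let $\Sigma$ be the closure under subformulas of $\{\varphi\}$, which is finite by the remark following Definition \ref{def.: chiusura per sottoformule}, and I form the filtration $M^f_\Sigma=\pair{W^f,R^f,v^f}$ of $M$ through $\Sigma$, now using the $\MPWK$ equivalence relation $\equiv_\Sigma$ together with Definition \ref{def.: filtrazione}. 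By Theorem \ref{th: filtrations for pwk}, the property $v(w,\varphi)\neq 0$ is preserved in passing to the quotient, giving $v^f([w],\varphi)\neq 0$; hence $\varphi$ is satisfied in $M^f_\Sigma$ in the PWK sense.

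It then remains to verify that $M^f_\Sigma$ is a \emph{finite} model, i.e. that $W^f$ is finite. As in the Bochvar case, I would exhibit an injection $g\colon W^f\to\mathscr{P}(\Sigma)$ induced by the behaviour of each world on the finitely many formulas of $\Sigma$; since $\equiv_\Sigma$ identifies precisely those worlds that agree on $\Sigma$, the map $g$ is well-defined and injective, whence $card(W^f)\leq card(\mathscr{P}(\Sigma))=2^{card(\Sigma)}<\infty$. Combining this with the previous paragraph yields a finite model satisfying $\varphi$, which is the claim.

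The only point requiring care is the bookkeeping around the PWK equivalence relation $\equiv_\Sigma$: here the relevant designated set is $\{1,\ant\}$ rather than $\{1\}$, so I must make sure the invariant recorded by $g$ is exactly the one that $\equiv_\Sigma$ preserves, so that well-definedness and injectivity of $g$ go through unchanged. With Theorem \ref{th: filtrations for pwk} already in hand, this matching of invariants is the main (and essentially the only) obstacle, and it becomes routine once the invariant is chosen to agree with the definition of $\equiv_\Sigma$ for $\MPWK$.
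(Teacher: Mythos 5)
Your proposal is correct and follows exactly the paper's route: the paper proves this theorem by declaring it identical to the Bochvar case (Theorem \ref{th.: filtrazioni soddisfacilitÃ }), i.e.\ the same filtration argument with Theorem \ref{th: filtrations for pwk} replacing Lemma \ref{lemma: filtrations} and the injection into $\mathscr{P}(\Sigma)$ giving the bound $2^{card(\Sigma)}$. Your extra care in matching the invariant of the map $g$ to the $\MPWK$ relation $\equiv_\Sigma$ (non-falsity on $\Sigma$ rather than truth) is precisely the adjustment the paper leaves implicit.
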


\begin{proof}
	Identical to Theorem \ref{th.: filtrazioni soddisfacilitÃ }.
\end{proof}

\begin{corollary}[Decidability]\label{th: Decidibilita per PWK}
	The logic \MPWK is decidable.
\end{corollary}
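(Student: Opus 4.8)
The plan is to obtain decidability exactly as for $\B^{\Box}$ (Corollary~\ref{th: Decidibilita}), by combining the finite model property just established with soundness and completeness (Theorem~\ref{th: completezza MPWK}) and the fact that the filtration bound is effective. Since the matrix $\WKt$ is finite and the Hilbert calculus is recursive, the point is to turn the semantic characterisation of provability into a terminating search over finitely many finite models.

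First I would reduce the decision problem for the consequence relation to the decision problem for theoremhood. By the Deduction Theorem (Theorem~\ref{lemma: deduzione PWK modale}), for a finite set $\Gamma=\{\gamma_1,\dots,\gamma_n\}$ one has $\Gamma\vdash\varphi$ iff $\vdash\neg\Jzero\gamma_1\wedge\dots\wedge\neg\Jzero\gamma_n\to\neg\Jzero\varphi$; hence it suffices to decide, for a single formula $\psi$, whether $\vdash\psi$. By completeness (Theorem~\ref{th: completezza MPWK}) this is equivalent to $\models\psi$, i.e. to $\psi$ being valid, meaning $v(w,\psi)\neq 0$ at every world of every PWK-Kripke model.

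The key step is to recast the failure of validity as a satisfiability statement to which the finite model property applies. Using the tables for $\neg$ and $\Jdue$ in $\WKt$, one checks that the external formula $\Jzero\psi=\Jdue\neg\psi$ takes only the values $0,1$, with $v(w,\Jzero\psi)=1$ precisely when $v(w,\psi)=0$. Consequently $\psi$ is refuted (value $0$) at some world of some model if and only if $\Jzero\psi$ is satisfiable in the $\MPWK$ sense (value $\neq 0$ somewhere). By the finite model property established above (whose proof, via filtration, rests on Theorem~\ref{th: filtrations for pwk}), $\Jzero\psi$ is satisfiable iff it is satisfiable in a finite model whose cardinality is bounded by $2^{m}$, where $m$ is the effectively computable size of the closure of $\{\Jzero\psi\}$ under subformulas.

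To conclude, there are, up to isomorphism, only finitely many PWK-Kripke models on a domain of cardinality at most $2^{m}$, and for each of them the evaluation of $\Jzero\psi$ at each world is a finite computation in the three-element algebra $\WKt$. Thus one can effectively decide whether $\Jzero\psi$ is satisfiable, hence whether $\psi$ is valid, hence whether $\vdash\psi$. I expect the only genuinely delicate point to be the correctness of the reduction from refutation to satisfiability under the non-falsity-preserving reading: one must verify that the $\MPWK$ notion of satisfiability (value $\neq 0$), tested against $\Jzero\psi$, captures exactly the worlds where $\psi$ receives $0$, so that the finite model property for satisfiability yields a genuine finite-countermodel property for validity. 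The remaining verifications are routine and parallel the Bochvar case.
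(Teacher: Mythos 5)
Your proof is correct and follows essentially the same route as the paper: the corollary is obtained there directly from the finite model property established via filtration (Theorem~\ref{th: filtrations for pwk}) together with completeness (Theorem~\ref{th: completezza MPWK}), exactly the combination you use, with the effective bound $2^{card(\Sigma)}$ making the model search terminate. Your one addition --- reducing refutability of $\psi$ to $\MPWK$-satisfiability of the external formula $\Jzero\psi$, which takes only classical values and equals $1$ precisely where $\psi$ equals $0$ --- is a detail the paper leaves implicit, and you verify it correctly; it is indeed the right way to bridge the gap between the satisfiability statement proved by filtration and the validity statement needed for deciding theoremhood under the non-falsity-preserving semantics.
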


\section{Extensions of modal weak Kleene logics}\label{sec: estensioni}

The aim of this Section is to axiomatize some extensions of both the modal logics $\B^{\Box}$ and $\MPWK$. In particular, we focus on those extensions whose semantical counterpart is given by reflexive, transitive and/or Euclidean models (clearly, the properties refer all to the relational part of models). To this end, consider the following formulas: 
$$ \text{(T$_e$) }\;\; \Box\Jdue\varphi\to\Jdue\varphi, $$
$$ \text{(4$_e$) }\; \;\;\Box\Jdue\varphi\to\Box\Box\Jdue\varphi, $$
$$ \text{(5$_e$) } \;\;\diamond\Jdue\varphi\to\Box\diamond\Jdue\varphi, $$

\vspace{5pt}
\noindent
which substantially consist of ``external versions''\footnote{The problem with standard modal formulas is the same with Bochvar (non-external) propositional logic, which is well-known as a logic without theorems, since every formula can be evaluated into $\ant$. Similarly, in the modal context using the internal formulas (T), (4) or (5) would provide an unsound axiomatization.
} of the standard modal formulas (T), (4) and (5). The formulas introduced above allows to capture some frame properties within $\B^{\square}$, as shown by the following.

\begin{proposition}\label{prop: proprietÃ  dei frame Bochvar}
	Let $\mathcal{F}=(W,R)$ be a frame. Then
	\begin{enumerate}
		\item $\mathcal{F}\vDash_{\B^{\square}} $ \emph{T$_e$} iff $R$ is reflexive; 
		\item $\mathcal{F}\vDash_{\B^{\square}} $ \emph{4$_e$} iff $R$ is transitive;
            \item $\mathcal{F}\vDash_{\B^{\square}} $ \emph{5$_e$} iff $R$ is euclidean.
	\end{enumerate}
\end{proposition}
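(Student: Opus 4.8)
The plan is to treat this as a standard modal correspondence result, proving each of the three biconditionals separately and, within each, the two directions by distinct strategies: the relational-property-to-validity direction by a direct semantic argument, and the validity-to-relational-property direction by contraposition, exhibiting a single refuting valuation. The crucial preliminary observation --- which I would record once and reuse throughout --- is that every subformula occurring in \emph{(T$_e$)}, \emph{(4$_e$)} and \emph{(5$_e$)} lies under a $\Jdue$, and $\Jdue\chi$ is an external formula, hence $v(u,\Jdue\chi)\in\{0,1\}$ for every world $u$ and valuation $v$. Consequently the clauses ``$v(w,\varphi)\neq\ant$'' in Definition~\ref{def: semantica Box} (and in Remark~\ref{rem: semantica Diamond}) are \emph{automatically} met whenever the argument of $\Box$ or $\diamond$ has the shape $\Jdue\chi$, so that $\Box\Jdue\chi$ and $\diamond\Jdue\chi$ are themselves external and the operators behave exactly as the classical Kripke modalities restricted to $\{0,1\}$. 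This reduces each case to the familiar classical correspondence computation.

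For the directions ``relational property $\Rightarrow$ validity'' I would argue directly. For \emph{(T$_e$)}, assuming $R$ reflexive and $v(w,\Box\Jdue\varphi)=1$, the clause forces $v(s,\Jdue\varphi)=1$ for all $s$ with $wRs$; instantiating $s:=w$ via $wRw$ gives $v(w,\Jdue\varphi)=1$, so the (classical) implication holds at $w$. For \emph{(4$_e$)}, assuming $R$ transitive and $v(w,\Box\Jdue\varphi)=1$, I would take any $s$ with $wRs$ and any $t$ with $sRt$; transitivity yields $wRt$, whence $v(t,\Jdue\varphi)=1$, so $v(s,\Box\Jdue\varphi)=1$, and since $s$ was arbitrary $v(w,\Box\Box\Jdue\varphi)=1$. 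For \emph{(5$_e$)}, assuming $R$ euclidean and $v(w,\diamond\Jdue\varphi)=1$, Remark~\ref{rem: semantica Diamond} gives some $u$ with $wRu$ and $v(u,\Jdue\varphi)=1$; for any $s$ with $wRs$ the euclidean condition (applied to $wRs$ and $wRu$) gives $sRu$, so $v(s,\diamond\Jdue\varphi)=1$, and hence $v(w,\Box\diamond\Jdue\varphi)=1$.

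For the converse directions I would contrapose, assuming the relational property fails and building a valuation on a single variable $p$ (taking $\varphi:=p$) that refutes the axiom at the offending world. If $w$ witnesses non-reflexivity, set $v(w,p):=0$ and $v(u,p):=1$ for $u\neq w$: since no successor of $w$ equals $w$, $v(w,\Box\Jdue p)=1$ while $v(w,\Jdue p)=0$, falsifying \emph{(T$_e$)}. If $w,s,t$ witness non-transitivity ($wRs$, $sRt$, not $wRt$), set $v(t,p):=0$ and $v(u,p):=1$ otherwise: then $t$ is not a successor of $w$, so $v(w,\Box\Jdue p)=1$, but $sRt$ forces $v(s,\Box\Jdue p)=0$, hence $v(w,\Box\Box\Jdue p)=0$, refuting \emph{(4$_e$)}. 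If $w,s,t$ witness failure of the euclidean property ($wRs$, $wRt$, not $sRt$), set $v(t,p):=1$ and $v(u,p):=0$ otherwise: then $wRt$ gives $v(w,\diamond\Jdue p)=1$, while $t$ is the unique world with $v(\cdot,\Jdue p)=1$ and is not a successor of $s$, so $v(s,\diamond\Jdue p)=0$ and thus $v(w,\Box\diamond\Jdue p)=0$, refuting \emph{(5$_e$)}.

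I expect no deep obstacle here; the only point demanding genuine care --- and the reason the \emph{external} axioms are used in place of the naive $(\mathrm{T}),(4),(5)$ --- is the systematic verification that the ``$\neq\ant$'' side-conditions of the modal clauses never obstruct the argument. I would therefore make the opening observation explicit and invoke it at each use, checking in particular that in every refuting valuation the relevant worlds carry a two-valued (non-$\ant$) value for the $\Jdue$-formulas involved, so that the $0$- and $1$-clauses of Definition~\ref{def: semantica Box} and Remark~\ref{rem: semantica Diamond} apply as intended.
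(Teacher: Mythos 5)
Your proposal is correct and takes essentially the same route as the paper's proof: a standard correspondence argument via single-variable valuations, resting on the observation (which the paper makes only parenthetically, e.g.\ ``observing that $v(w,\Jdue p)\neq\ant$'') that $\Jdue$-prefixed formulas are two-valued, so the $\ant$-clauses of the box and diamond semantics never interfere. The only difference is packaging: the paper derives reflexivity and transitivity \emph{directly} from validity using the valuation $v(s,p)=1$ iff $wRs$, contraposing only in the euclidean case, whereas you contrapose uniformly in all three cases --- your countermodels for T$_e$ and 4$_e$ are just the contrapositive reading of the paper's argument, and your 5$_e$ valuation (value $1$ at $t$ only, $0$ elsewhere) is in fact slightly cleaner than the paper's, which redundantly stipulates $v(w,p)=1$ and would be over-specified if $s'Rw$.
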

\begin{proof}
    (1) $(\Rightarrow)$ Suppose $\mathcal{F}\models \text{T$_e$}$ and, for arbitrary $w\in W$, let $X= \{s\in W \;|\; wRs\}$. Consider the valuation $v(s,p) = 1$ iff $s\in X$, for any propositional variable $p$. It follows that $v(s,\Jdue p) = 1$, and since $X$ is the set of successors of $w$, we have $v(w,\Box\Jdue p) = 1$. By assumption $\mathcal{F}\models \text{T$_e$}$, therefore in particular $v(w,\Box\Jdue p\to\Jdue p)= 1$. It follows that $v(w,\Jdue p) = 1$, thus $v(w,p) = 1$, so, by definition, $w\in X$, that is $wRw$, showing that $R$ is reflexive. \\
	\noindent
	$(\Leftarrow)$ It is immediate to check that T$_e$ is valid in every reflexive frame.\\
 
    (2) $(\Rightarrow)$ Assume $\mathcal{F}\models$ 4$_e$, let $w\in W$ such that $w R s'$ and $s'\mathcal{R}t$, for some $s',t\in W$. Consider the valuation $v(s,p) = 1$ iff $wRs$, for every $s\in W$ and any propositional variable $p$. This implies that $v(s,\Jdue p) = 1$ for every $wR s$, thus $v(w,\Box\Jdue p) = 1$ (observing that $v(w,\Jdue p)\neq\ant$. Since $\mathcal{F}\models $ 4$_e$ then $v(w,\Box\Box\Jdue p) = 1$. Since $w R s'$, then $v(s', \Box\Jdue p) = 1$, therefore $v(t,\Jdue p) = 1$ (since $s' R t$), thus $v(t, p) = 1$. This implies that $w R t$, i.e. $R$ is transitive as desired.

	\noindent
	$(\Leftarrow)$ It is immediate to check that 4$_e$ is valid in every transitive frame.\\
 
    (3) $(\Rightarrow)$ Suppose $\mathcal{F}$ to be non-euclidean, therefore for some $w,s',s''\in W, wRs', wRs''$ but $\pair{s',s''}\notin R$. Define the valuation $v$ such that for an arbitrary variable $p $, $ v(w,p)=v(s'',p)=1$, while $v(s',p)=0$ and for all $t$ s.t. $s'Rt,v(t,p)=0$. It follows that $v(w,\diamond\Jdue p)=1$ but $v(s',\diamond\Jdue p)=0$, therefore $v(w,\Box\diamond\Jdue p)=0$, hence $v(w,\diamond\Jdue p\to\Box\diamond\Jdue p)=0$. This countermodel proves $\mathcal{F}\nvDash \text{5}_e.$ \\
	\noindent
	$(\Leftarrow)$ It is immediate to check that 5$_e$ is valid in every euclidean frame.
\end{proof}

In the case of the logic $\MPWK$, the same frame properties are expressed by the standard modal formulas: 
$$ \text{(T) } \Box\varphi\to\varphi $$
$$ \text{(4) } \Box\varphi\to\Box\Box\varphi $$
$$ \text{(5) } \diamond\varphi\to\Box\diamond\varphi $$

\begin{proposition}\label{prop: proprietÃ  dei frame PWK}
	Let $\mathcal{F}=(W,R)$ be a frame. Then
	\begin{enumerate}
		\item $\mathcal{F}\vDash_{\MPWK}$ \emph{(T)} iff $R$ is reflexive; 
		\item $\mathcal{F}\vDash_{\MPWK}$ \emph{(4)} iff $R$ is transitive;
            \item $\mathcal{F}\vDash_{\MPWK} $\emph{(5)} iff $R$ is euclidean.
	\end{enumerate}
\end{proposition}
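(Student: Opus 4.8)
The plan is to establish each of the three biconditionals by its two directions, closely mirroring the already-proven Bochvar case in Proposition~\ref{prop: proprietÃ  dei frame Bochvar}, but now reading off the $\MPWK$ clauses for $\Box$ and $\diamond$ (Definition~\ref{def: semantica Box per PWK} and Remark~\ref{rem: semantica Diamond per PWK}) and using the non-falsity notion of validity (Definition~\ref{def: soddisfacibilitÃ  e validitÃ  PWK}): $\mathcal{F}\vDash_{\MPWK}\chi$ means $v(w,\chi)\neq 0$ for every valuation $v$ and every $w\in W$. The conceptual reason why the \emph{plain} schemata (T), (4), (5) suffice here, whereas the Bochvar version required the external variants, is that $\ant$ is a designated value in $\MPWK$: a formula of the form $\Box\varphi\to\chi$ can be undesignated (equal to $0$) only when its antecedent equals $1$ and its consequent equals $0$, which forces all the worlds involved into the classical values $\{0,1\}$. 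Consequently every countermodel below can be built with a single variable $p$ taking values only in $\{0,1\}$, so that the ``$v(w,\varphi)\neq\ant$'' meaningfulness side-conditions in the modal clauses are automatically satisfied.

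For the three ``frame property $\Rightarrow$ validity'' directions I would argue by contradiction, assuming the axiom takes value $0$ at some world $w$. For (T) this forces $v(w,\Box\varphi)=1$ and $v(w,\varphi)=0$; reflexivity gives $wRw$, and the clause for $v(w,\Box\varphi)=1$ yields $v(w,\varphi)\neq 0$, a contradiction. For (4) a value $0$ forces $v(w,\Box\varphi)=1$ and $v(w,\Box\Box\varphi)=0$; the latter produces a successor $s$ with $v(s,\Box\varphi)=0$, which in turn produces $t$ with $sRt$ and $v(t,\varphi)=0$; transitivity gives $wRt$, contradicting $v(w,\Box\varphi)=1$. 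For (5) a value $0$ forces $v(w,\diamond\varphi)=1$ (so some successor $s$ has $v(s,\varphi)=1$) and $v(w,\Box\diamond\varphi)=0$ (so some successor $t$ has $v(t,\diamond\varphi)=0$, i.e.\ $v(u,\varphi)\neq 1$ for every $u$ with $tRu$); applying the euclidean property to $wRt$ and $wRs$ gives $tRs$, and then the witness $v(s,\varphi)=1$ contradicts $v(u,\varphi)\neq 1$ along $t$.

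For the converse ``validity $\Rightarrow$ frame property'' directions I would contrapose and exhibit explicit $\{0,1\}$-valued countermodels using one variable $p$, in each case placing the value $0$ (or, for (5), the value $1$) precisely at the world that the failed frame condition keeps out of the relevant successor set. If $R$ is not reflexive, take $w$ with $\neg(wRw)$ and set $v(w,p)=0$, $v(x,p)=1$ for $x\neq w$; then all successors of $w$ differ from $w$, so $v(w,\Box p)=1$ while $v(w,p)=0$, giving $v(w,\Box p\to p)=0$. If $R$ is not transitive, take $w,s,t$ with $wRs$, $sRt$, $\neg(wRt)$ and set $v(t,p)=0$, $v(x,p)=1$ for $x\neq t$; since $t$ is not a successor of $w$, every successor of $w$ has value $1$ and $v(w,\Box p)=1$, while $t$ witnesses $v(s,\Box p)=0$, hence $v(w,\Box\Box p)=0$ and (4) fails at $w$. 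If $R$ is not euclidean, take $w,s',s''$ with $wRs'$, $wRs''$, $\neg(s'Rs'')$ and set $v(s'',p)=1$, $v(x,p)=0$ for $x\neq s''$; then $s''$ witnesses $v(w,\diamond p)=1$, whereas $s''$ is \emph{not} a successor of $s'$ (because $\neg(s'Rs'')$), so no successor of $s'$ carries value $1$ and $v(s',\diamond p)=0$, yielding $v(w,\Box\diamond p)=0$ and the failure of (5) at $w$.

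The soundness (``$\Leftarrow$'') computations are routine. The main point requiring care is that each countermodel valuation be \emph{consistent} and respect the three-valued clauses — in particular that the value-$0$ clauses for $\Box\varphi$ and $\diamond\varphi$ still demand meaningfulness ($v(w,\varphi)\neq\ant$) of the evaluating world. The decisive observation, which makes all of this trivial, is that in every construction the single excluded world is genuinely separated from the designated successor set exactly by the failure of the frame property ($\neg(wRw)$, $\neg(wRt)$, respectively $\neg(s'Rs'')$), so no world is ever assigned two values; and because the valuations only use $\{0,1\}$, the meaningfulness conditions never intervene. Thus the only real work is bookkeeping the $\MPWK$ clauses correctly, with the euclidean converse being the most delicate instance.
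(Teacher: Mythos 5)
Your proposal is correct and follows essentially the same route as the paper: the paper proves this proposition by noting it ``runs similarly'' to Proposition~\ref{prop: proprietÃ  dei frame Bochvar}, whose argument likewise pairs routine soundness checks with explicit one-variable, $\{0,1\}$-valued countermodels pinned to the failed frame condition. Your additional observation --- that the plain schemata (T), (4), (5) suffice for $\MPWK$ because an implication can only take value $0$ with classical antecedent $1$ and consequent $0$, so the meaningfulness clauses never intervene --- correctly spells out the detail the paper leaves implicit (cf.\ its footnote on why $\B^{\square}$ instead needs the external variants).
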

\begin{proof}
The proofs runs similarly as Proposition \ref{prop: proprietÃ  dei frame Bochvar}.
\end{proof}

The correspondences established by Propositions \ref{prop: proprietÃ  dei frame Bochvar} and \ref{prop: proprietÃ  dei frame PWK} allow us to immediately prove completeness for some extensions of $\B^{\square}$ and $\MPWK$. For an axiomatic calculus L, let LAx$_1\dots$Ax$_n$ be the logic obtained by adding (Ax$_1),\dots$,(Ax$_n$) to L. Moreover we use the following abbreviations: S4 := T + 4, S5 := T + 5, S4$_e$ := T$_e$ + 4$_e$, S5$_e$ := T$_e$ + 5$_e$.

\begin{theorem}
    The relation $\mathcal{R}$ of the canonical models\footnote{The canonical model for a certain extension $\B^{\square}$Ax of $\B^{\square}$ differs from Definition \ref{def: modello canonico} only for the set of worlds, which now consist not of all maximal consistent sets (w.r.t. $\B^{\square}$), but only of the maximal consistent theories of $\B^{\square}$Ax. The canonical model for $\MPWK$Ax is adapted from Definition \ref{def: modello canonico MPWK} in a similar fashion.} for the following logics have the properties:
    \begin{itemize}[align=left]
        \item In $\B^{\square}\emph{T}_e$ and $\MPWK\emph{T}$ $\mathcal{R}$ is reflexive;
        \item In $\B^{\square}4_e$ and $\MPWK4$ $\mathcal{R}$ is transitive;
        \item In $\B^{\square}5_e$ and $\MPWK5$ $\mathcal{R}$ is euclidean;
        \item In $\B^{\square}\emph{S}4_e$ and $\MPWK\emph{S}4$ $\mathcal{R}$ is reflexive and transitive;
        \item In $\B^{\square}\emph{S}5_e$ and $\MPWK\emph{S}5$ $\mathcal{R}$ is an equivalence relation.
    \end{itemize}
\end{theorem}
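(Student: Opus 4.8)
The plan is to establish the five rows by the standard \emph{canonicity} argument: for each added axiom I show that the canonical relation $\mathcal{R}$ of the corresponding extension is forced to carry the matching property, and the two S4/S5 rows then follow by combining the single-axiom cases (recalling that a reflexive euclidean relation is automatically symmetric and transitive, hence an equivalence). Before doing so I would observe that the whole canonical apparatus of Sections \ref{sec: Modal}--\ref{sec: modal PWK} -- Lindenbaum's Lemma \ref{lemma: Lindenbaum} (resp. \ref{lemma: Lindenbaum per PWK}), the Existence Lemma \ref{lemma: di esistenza} (resp. \ref{lemma: di esistenza per MPWK}) and the Truth Lemma -- carries over \emph{verbatim} to each extension: adding axioms leaves the Deduction Theorem \ref{th: teo. deduzione Bochvar modale} and all the auxiliary facts of Lemmas \ref{lemma: fatti basilari logica B}/\ref{lemma: fatti basilare PWKe} untouched, the only change being that $\mathcal{W}$ now ranges over the maximal consistent (resp. maximally non-trivial) theories of the extension. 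Thus it suffices to treat (T$_e$), (4$_e$), (5$_e$) (and (T), (4), (5)) one at a time.

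For reflexivity in $\B^{\square}$T$_e$ I would show $w\mathcal{R}w$ directly from Definition \ref{def: relazione canonica}: given $\Box\varphi\in w$, maximality yields $\Jdue\Box\varphi\in w$ (Lemma \ref{lemma: proprieta mondi massimali consistenti - 2}), axiom (B3) together with (MP) gives $\Box\Jdue\varphi\in w$, and (T$_e$) then gives $\Jdue\varphi\in w$, hence $\varphi\in w$. Transitivity in $\B^{\square}$4$_e$ is the same idea: from $\Box\varphi\in w$ I obtain $\Box\Jdue\varphi\in w$ as above, apply (4$_e$) to get $\Box\Box\Jdue\varphi\in w$, and then push the inner box through the chain $w\mathcal{R}s\mathcal{R}t$ by using the definition of $\mathcal{R}$ twice, landing on $\Jdue\varphi\in t$ and so $\varphi\in t$. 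The decisive bookkeeping device here is that every formula actually manipulated has the form $\Box^{k}\Jdue\varphi$, i.e. it is \emph{external}, so that membership in a maximal consistent set is genuinely two-valued and the axioms (T$_e$), (4$_e$) (which are stated for $\Jdue\varphi$) apply cleanly; this is precisely the role of the external decorations in these axioms.

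The euclidean rows are the real obstacle, and I would isolate an auxiliary \emph{reverse transfer} lemma: if $w\mathcal{R}t$ and $\alpha$ is external with $\alpha\in t$, then $\diamond\alpha\in w$. Its proof uses that $\diamond\alpha=\neg\Box\neg\alpha$ is again external, hence meaningful in $w$, so $\diamond\alpha\notin w$ would force $\neg\diamond\alpha\in w$, i.e. $\Box\neg\alpha\in w$; but then $w\mathcal{R}t$ gives $\neg\alpha\in t$, contradicting $\alpha\in t$. Granting this, to prove $s\mathcal{R}t$ from $w\mathcal{R}s$ and $w\mathcal{R}t$ I take $\Box\varphi\in s$ and argue for $\varphi\in t$ by contradiction: if $\Jdue\varphi\notin t$ then, by meaningfulness of the external $\Jdue\varphi$, $\neg\Jdue\varphi\in t$; the reverse lemma applied to $\alpha=\neg\Jdue\varphi$ yields $\diamond\neg\Jdue\varphi\in w$; rewriting $\neg\Jdue\varphi$ as $\Jdue\neg\Jdue\varphi$ (via ($\rho$-B9) and $\vdash\alpha\leftrightarrow\Jdue\alpha$ for external $\alpha$) brings this into the scope of (5$_e$), whence $\Box\diamond\neg\Jdue\varphi\in w$; finally $w\mathcal{R}s$ transfers $\diamond\neg\Jdue\varphi\in s$, i.e. $\neg\Box\Jdue\varphi\in s$, which contradicts $\Box\Jdue\varphi\in s$ (the latter coming from $\Box\varphi\in s$ via (B3) as before). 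The point needing most care is exactly the three-valued failure of ``$\chi\notin w\Rightarrow\neg\chi\in w$'': this bi-implication holds only for \emph{meaningful} $\chi$, which is why every formula crossing the relation in this argument is first turned external by prefixing $\Jdue$.

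For $\MPWK$ the three cases run in parallel, with three replacements. First, the canonical relation is that of Definition \ref{def: relazione canonica per MPWK}, phrased as $\Box\varphi\in w\Rightarrow\neg\varphi\notin s$, and the ambient sets are maximally non-trivial (Lemma \ref{lemma: proprieta mondi massimali consistenti PWK}). Second, the role of (B3) in normalising boxed formulas is taken over by (P2), namely $\Box\varphi\leftrightarrow\Box\neg\Jzero\varphi$, which lets me route every step through the external formula $\neg\Jzero\varphi$. Third, because the designated set is $\{1,\ant\}$ and $\ant\to\ant=\ant$ is designated, the \emph{plain} schemata (T), (4), (5) are already sound, so they may be used directly in place of their external versions; apart from this the reflexive, transitive and euclidean arguments -- including the reverse transfer lemma -- are identical. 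Combining these canonicity facts with the frame correspondences of Propositions \ref{prop: proprietÃ  dei frame Bochvar} and \ref{prop: proprietÃ  dei frame PWK} then yields, as an immediate corollary, strong completeness of each listed logic with respect to its associated class of frames.
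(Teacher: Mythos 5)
Your proof is correct, but it takes a genuinely different---and substantially more explicit---route than the paper's. The paper disposes of this theorem in one line, deducing it from the frame-correspondence propositions of Section \ref{sec: estensioni}; strictly speaking, correspondence (an axiom is valid on a \emph{frame} iff the frame has the property, i.e.\ a statement quantifying over all valuations) does not by itself yield canonicity, since the canonical model only guarantees the axioms under the canonical valuation---in classical modal logic there are sound, correspondent axioms that fail to be canonical (McKinsey being the standard example). What you supply is precisely the direct canonicity argument that the paper's one-liner leaves implicit: for each of (T$_e$), (4$_e$), (5$_e$) you push boxed formulas through the canonical relation of Definition \ref{def: relazione canonica}, using (B3) to normalise everything to the external shape $\Box^{k}\Jdue\varphi$ so that maximal consistent sets behave two-valuedly on the formulas involved, and your auxiliary reverse-transfer lemma ($w\mathcal{R}t$ and $\alpha\in t$ with $\alpha$ external imply $\diamond\alpha\in w$) is exactly the right device for the euclidean case, where the three-valued failure of ``$\chi\notin w\Rightarrow\neg\chi\in w$'' is the genuine danger. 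Two points deserve a line more than you give them, though neither is a gap of substance. First, in the (5$_e$) argument you replace $\diamond\neg\Jdue\varphi$ by $\diamond\Jdue\neg\Jdue\varphi$ inside a modal context; substitution of provably equivalent \emph{external} formulas under $\Box$ (hence under $\diamond$) must be justified via (N) together with (BK) of Lemma \ref{lemma: K per Bochvar su esterne}, since the logic has no unrestricted replacement rule. Second, for \MPWK your claim that the plain schemata (T), (4), (5) ``may be used directly'' is right but needs the membership bookkeeping of Lemma \ref{lemma: proprieta mondi massimali consistenti PWK}: a theorem instance belongs to a maximally non-trivial $w$ only when its open variables are meaningful in $w$ (item (4) of that lemma), which here follows from $\Box\varphi\in w$ by infectiousness; only then does closure of $w$ under unrestricted modus ponens finish the step, exactly as in the paper's Truth Lemma \ref{lemma: truth lemma per MPWK}. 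In exchange for its length, your route buys a self-contained proof that makes the three-valued subtleties visible, whereas the paper's appeal to correspondence buys brevity at the cost of leaving the (folklore, Sahlqvist-style) canonicity of these particular axioms unargued.
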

\begin{proof}
    It follows from Propositions \ref{prop: proprietÃ  dei frame Bochvar} and \ref{prop: proprietÃ  dei frame PWK}.
\end{proof}

That the accessibility relation of the canonical model has the desired properties is enough to obtain the following completeness results:

\begin{corollary}\label{cor: completezza estensioni Bochvar}
    The following hold:
    \begin{itemize}
        \item $\B^{\square}\emph{T}_e$ is complete with respect to the class of reflexive frames;
        \item $\B^{\square}4_e$ is complete with respect to the class of transitive frames;
        \item $\B^{\square}5_e$ is complete with respect to the class of euclidean frames;
        \item $\B^{\square}\emph{S}4_e$ is complete with respect to the class of reflexive and transitive frames;
        \item $\B^{\square}\emph{S}5_e$ is complete with respect to the class of frames whose relation is an equivalence.
    \end{itemize}
\end{corollary}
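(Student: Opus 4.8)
The plan is to obtain each of these completeness results as a direct relativization of the canonical-model argument already carried out for the base logic $\B^{\square}$, specializing the worlds to the maximally consistent theories of the extension at hand. Fix one of the listed calculi $\B^{\square}\text{Ax}$, where $\text{Ax}$ is one of $\text{T}_e$, $4_e$, $5_e$, or one of the combinations $\text{S}4_e$, $\text{S}5_e$, and let $\mathcal{K}$ be the corresponding class of frames (reflexive, transitive, euclidean, reflexive-and-transitive, or equivalence relations). What has to be shown is that $\Gamma\vdash_{\B^{\square}\text{Ax}}\varphi$ iff $\Gamma\models_{\mathcal{K}}\varphi$; the completeness (right-to-left) direction is the substantive one, while soundness is routine.

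First I would dispatch soundness. The axioms and rules inherited from $\B^{\square}$ are sound on the class of all frames, hence a fortiori on the subclass $\mathcal{K}$, by the soundness half of Theorem \ref{th: completezza Bochvar modale}. For the genuinely new axioms it suffices to invoke the $(\Leftarrow)$ directions of Proposition \ref{prop: proprietÃ  dei frame Bochvar}, which state precisely that $\text{T}_e$, $4_e$, $5_e$ are valid on reflexive, transitive, euclidean frames respectively; for $\text{S}4_e$ and $\text{S}5_e$ one simply validates each conjunct on the relevant intersection class. Thus every theorem of $\B^{\square}\text{Ax}$ is $\mathcal{K}$-valid.

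For the completeness direction I would rerun the canonical construction with $\mathcal{W}$ taken to be the set of maximally consistent sets \emph{relative to} $\vdash_{\B^{\square}\text{Ax}}$, as indicated in the footnote to the preceding Theorem, and $\mathcal{R}$, $v$ defined exactly as in Definitions \ref{def: relazione canonica} and \ref{def: modello canonico}. The crucial observation is that, since the extension contains all axioms and the rules of $\B^{\square}$, every theorem of $\B^{\square}$ remains available; consequently the Lindenbaum Lemma \ref{lemma: Lindenbaum}, the Existence Lemma \ref{lemma: di esistenza}, and the Truth Lemma \ref{lemma: truth lemma} transfer verbatim, as their proofs appeal only to $\B^{\square}$-theorems (adding axioms only shrinks the stock of consistent sets, never the stock of provable facts used in those proofs). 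Hence, given $\Gamma\not\vdash_{\B^{\square}\text{Ax}}\varphi$, Lindenbaum extends $\Gamma$ to a maximally consistent $s$ with $\varphi\notin s$, and the Truth Lemma yields $v(s,\gamma)=1$ for all $\gamma\in\Gamma$ while $v(s,\varphi)\neq 1$. By the preceding Theorem the canonical relation $\mathcal{R}$ carries exactly the frame property defining $\mathcal{K}$, so this countermodel lies in $\mathcal{K}$ and witnesses $\Gamma\not\models_{\mathcal{K}}\varphi$.

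The whole argument is essentially bookkeeping once the canonical-frame Theorem is in hand, so there is no serious obstacle; the only point requiring a moment's care is the $\text{S}5_e$ case. There one must recall the standard fact that a relation which is both reflexive and euclidean is automatically symmetric and transitive — using reflexivity together with the euclidean condition to derive symmetry, and then symmetry together with the euclidean condition to derive transitivity — and is therefore an equivalence relation, so that the canonical frame for $\B^{\square}\text{S}5_e$ genuinely belongs to the class of frames whose relation is an equivalence.
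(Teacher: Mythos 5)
Your proposal is correct and follows the paper's own route: the paper derives the corollary directly from the preceding theorem that the canonical relation of the extension's canonical model (built from maximally consistent theories of $\B^{\square}$Ax, as in its footnote) has the corresponding frame property, with soundness supplied by the $(\Leftarrow)$ directions of Proposition \ref{prop: proprietÃ  dei frame Bochvar} and the countermodel by the unchanged Lindenbaum and Truth Lemmas. Your extra remark that a reflexive euclidean relation is an equivalence is exactly the standard fact implicitly used in the $\mathrm{S}5_e$ case.
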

\begin{corollary}\label{cor: completezza estensioni PWK}
    The following hold:
    \begin{itemize}
        \item $\MPWK\emph{T}$ is complete with respect to the class of reflexive frames;
        \item $\MPWK4$ is complete with respect to the class of transitive frames;
        \item $\MPWK5$ is complete with respect to the class of euclidean frames;
        \item $\MPWK\emph{S}4$ is complete with respect to the class of reflexive and transitive frames;
        \item $\MPWK\emph{S}5$ is complete with respect to the class of frames whose relation is an equivalence.
    \end{itemize}
\end{corollary}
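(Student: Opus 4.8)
The plan is to prove each item as an instance of the standard canonical-model completeness argument, drawing the required relational property from the Theorem immediately preceding this corollary. I would split every case into a soundness half and a completeness half for the relevant extension $\MPWK\text{Ax}$.

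For soundness, i.e.\ $\Gamma\vdash_{\MPWK\text{Ax}}\varphi$ implies $\Gamma\models\varphi$ over the relevant class, I would note that soundness of the base logic $\MPWK$ already holds (the $(\Rightarrow)$ direction of Theorem~\ref{th: completezza MPWK}), that the rules $(\rho\text{-B13})$, $(\text{PWKAlg3})$ and $(N)$ preserve validity, and that each added axiom is valid precisely on the class at hand. This last point is exactly the $(\Leftarrow)$ directions of the frame-correspondence result for $\MPWK$: $(T)$ is valid on reflexive frames, $(4)$ on transitive frames, $(5)$ on euclidean frames, and for $\text{S}4=\text{T}+4$ and $\text{S}5=\text{T}+5$ both axioms remain valid on the respective intersection classes (reflexive-and-transitive, and equivalence). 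Hence every theorem of $\MPWK\text{Ax}$ is valid on the corresponding class.

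For completeness I would argue contrapositively through the canonical model for $\MPWK\text{Ax}$, whose worlds are the maximally non-trivial theories of $\MPWK\text{Ax}$. First I would check that the whole apparatus built for $\MPWK$ survives the addition of $(T)$, $(4)$, $(5)$: the Deduction Theorem~\ref{lemma: deduzione PWK modale}, Lindenbaum's Lemma~\ref{lemma: Lindenbaum per PWK}, the Existence Lemma~\ref{lemma: di esistenza per MPWK} and the Truth Lemma~\ref{lemma: truth lemma per MPWK} go through verbatim, since their proofs rely only on the propositional base $\PWKe$, the axioms $(\text{P1})$--$(\text{P3})$, the rule $(N)$, and the combinatorics of maximally non-trivial sets, none of which is disturbed by having more theorems available; the extra axioms enter only as additional base cases $\vdash\varphi$ in the relevant inductions, handled uniformly by Lemma~\ref{lemma: fatti basilare PWKe}. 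Then, assuming $\Gamma\not\vdash_{\MPWK\text{Ax}}\varphi$, the set $\Gamma$ is non-trivial, so by Lindenbaum it extends to a maximally non-trivial theory $s$ of $\MPWK\text{Ax}$ with $\Gamma\subseteq s$ and $\neg\varphi\in s$. In the canonical model $\pair{\mathcal{W},\mathcal{R},v}$, the preceding Theorem guarantees that $\mathcal{R}$ is reflexive (resp.\ transitive, euclidean, reflexive-and-transitive, an equivalence), so the frame $\pair{\mathcal{W},\mathcal{R}}$ lies in the required class; and by the Truth Lemma $v(s,\gamma)=1\neq 0$ for every $\gamma\in\Gamma$ while $v(s,\varphi)=0$, so $s$ witnesses $\Gamma\not\models\varphi$ over that class.

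The hard part is not any single computation but the transfer of the canonical-model lemmas to the extensions; once that is secured, the corollary is a bookkeeping assembly of the preceding Theorem (which supplies the property of $\mathcal{R}$), the Truth Lemma (which supplies the countermodel), and the $(\Leftarrow)$ directions of the frame-correspondence result (which supply soundness). I expect no genuine difficulty, since adding axioms of the shape $(T)$, $(4)$, $(5)$ merely enlarges the set of theorems without affecting the Deduction Theorem or the structure of maximally non-trivial sets; the single point worth stating explicitly is that the worlds are now the maximally non-trivial theories of the \emph{extended} logic, so that each added axiom belongs to every world and thereby forces the corresponding constraint on the canonical relation.
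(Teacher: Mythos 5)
Your proposal is correct and follows essentially the same route as the paper, which proves this corollary simply by observing that the canonical relation for each extension $\MPWK\mathrm{Ax}$ has the corresponding relational property (the preceding Theorem) and that the canonical-model machinery --- Lindenbaum's Lemma \ref{lemma: Lindenbaum per PWK} and the Truth Lemma \ref{lemma: truth lemma per MPWK} --- transfers to maximally non-trivial theories of the extended logic, exactly as you spell out. Your explicit checks (soundness via the $(\Leftarrow)$ directions of Proposition \ref{prop: proprietÃ  dei frame PWK}, and the observation that the added axioms enter the inductions only as extra base cases) are precisely the details the paper leaves implicit in its footnote on canonical models for extensions.
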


Notice that depending on the choice of the basic logic we obtain a different notion of completeness: in the case of $\B^{\square}$ the completeness is w.r.t. the logical consequence relation $\vDash_{\B^{\square}}$, in the case of \MPWK the relation is $\vDash_{\MPWK}$.

The decidability of $\B^{\square}$ and \MPWK established by Corollaries \ref{th: Decidibilita} and \ref{th: Decidibilita per PWK} immediately follows for their (finitely axiomatizable) axiomatic extensions.

\begin{theorem}
    For $\emph{E}\in\{ \emph{T}_e,4_e,5_e,\emph{S}4_e,\emph{S}5_e \}$, the logic $\B^{\square}\emph{E}$ is decidable. For $\emph{E}\in\{ \emph{T},4,5,\emph{S}4,\emph{S}5 \}$, the logic $\MPWK\emph{E}$ is decidable.
\end{theorem}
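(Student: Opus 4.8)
The plan is to derive decidability of each extension from two ingredients, combined in the standard way: finite axiomatizability and the finite model property (FMP). Each of the ten logics is obtained by adjoining to the finitely axiomatized base ($\B^{\square}$ or $\MPWK$) finitely many of the schemata (T$_e$), (4$_e$), (5$_e$) (respectively (T), (4), (5)), so each extension is again finitely axiomatized; consequently its set of theorems is recursively enumerable. By Corollaries \ref{cor: completezza estensioni Bochvar} and \ref{cor: completezza estensioni PWK}, each extension $\B^{\square}$E (respectively $\MPWK$E) is sound and complete with respect to the class $\mathcal{K}_E$ of frames satisfying the corresponding relational property. Thus, once FMP relative to $\mathcal{K}_E$ is established, the set of non-theorems becomes recursively enumerable as well (enumerate the finite $\mathcal{K}_E$-models of increasing size and test for refutation), and a set that is simultaneously r.e. and co-r.e. is decidable. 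The decision procedure is then the parallel search: to test $\vdash\varphi$, look in parallel for a proof and for a finite $\mathcal{K}_E$-countermodel; exactly one search halts.

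First I would establish FMP by filtration, reusing Definition \ref{def.: filtrazione} together with the filtration Lemmas \ref{lemma: filtrations} and \ref{th: filtrations for pwk}, which already guarantee that the truth value of each $\varphi\in\Sigma$ is preserved in the finite quotient $\pair{W^f,R^f,v^f}$. The only additional requirement is that, starting from a model whose frame lies in $\mathcal{K}_E$, the filtrated frame $(W^f,R^f)$ again lies in $\mathcal{K}_E$. Since $R^f$ is defined purely in terms of $(W,R)$ and the subformula-closed set $\Sigma$, these verifications are entirely relational and the three-valued valuation plays no role. Reflexivity is immediate for the minimal filtration: if $wRw$ then $[w]R^f[w]$ by taking $w'=s'=w$.

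The hard part will be preserving transitivity and euclideanness — and their combinations in the S4 and S5 systems — since the minimal $R^f$ of Definition \ref{def.: filtrazione} need not retain these properties, exactly as in the classical two-valued case. I would therefore replace it, in those cases, by a suitably coarser relation defined through the $\Box$-formulas of $\Sigma$: the transitive filtration for (4$_e$)/(4) and the corresponding euclidean (respectively equivalence-relation) filtration for (5$_e$)/(5) and the S5 calculi. For each such choice two checks are needed, namely that it still constitutes a correct filtration (so that Lemmas \ref{lemma: filtrations} and \ref{th: filtrations for pwk} carry over verbatim) and that it forces the intended relational property; here the \emph{external} character of the axioms (T$_e$), (4$_e$), (5$_e$) is what makes the argument go through, because the relevant $\Box\Jdue$-conditions behave classically on the covered formulas involved. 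With FMP secured for each $\mathcal{K}_E$, decidability follows by the parallel search described above, mirroring the base-case Corollaries \ref{th: Decidibilita} and \ref{th: Decidibilita per PWK}.
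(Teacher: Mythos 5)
Your plan is correct in outline, but at the crucial step it takes a genuinely different route from the paper. The paper never modifies the filtration relation: it keeps the minimal filtration of Definition \ref{def.: filtrazione} in every case and instead puts the axiom itself into the set being filtrated. Concretely, for $\B^{\square}$T$_e$ it sets $\Gamma=\{\mathrm{T}_e,\varphi\}$, takes $\Sigma$ to be its subformula closure, observes via Lemma \ref{lemma: filtrations} that T$_e$ remains valid in the filtrated model, and then appeals to the frame-correspondence proposition to conclude that the filtrated frame is reflexive; the nine remaining cases are declared to follow by the same strategy, and decidability is then drawn directly from the FMP bound $2^{card(\Sigma)}$ together with the completeness Corollaries \ref{cor: completezza estensioni Bochvar} and \ref{cor: completezza estensioni PWK}. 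Your method -- minimal filtration for reflexivity, Lemmon-style transitive and euclidean (resp.\ equivalence) filtrations for the 4- and 5-systems, plus the r.e./co-r.e.\ parallel search -- is the classical textbook approach. Comparing the two: the paper's device is uniform and very short, but its brevity comes from reading the correspondence propositions in the direction ``axiom valid in the filtrated \emph{model} $\Rightarrow$ frame property,'' whereas those propositions establish the frame property only from \emph{frame} validity (all valuations). For T$_e$ this is harmless since, as you observe, the minimal filtration preserves reflexivity outright; but for 4$_e$ and 5$_e$ the validity in the quotient of the finitely many filtrated instances does not by itself force transitivity or euclideanness of $R^f$, which is exactly the difficulty your coarsened relations are built to handle. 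So your extra machinery is not wasted: it is the standard, robust treatment of precisely the point that the paper's uniform strategy passes over in its ``same strategy'' remark. What you still owe -- and correctly flag -- are the two verifications for each modified relation: that it is still a filtration, so that the box clauses of Lemmas \ref{lemma: filtrations} and \ref{th: filtrations for pwk} go through (here the three-valued twist is benign, since the meaningfulness component of $v(w,\square\psi)$ is local to $w$ and is handled by the closure condition $+\psi\in\Sigma$ of Definition \ref{def.: chiusura per sottoformule}), and that it enjoys the intended relational property; both are routine adaptations of the classical arguments. Finally, your parallel search is sound but slightly more roundabout than necessary: given the explicit size bound from the filtration construction, one can simply enumerate and check all models up to that cardinality, which is the decision procedure the paper implicitly relies on.
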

\begin{proof}
    We give the proof for $\B^{\square}$T$_e$, the others cases employs the same strategy. Using the same definitions of set closed under subformulas and filtration from Definitions \ref{def.: chiusura per sottoformule} and \ref{def.: filtrazione}, we have that Lemma \ref{lemma: filtrations} still holds. Suppose that $\varphi$ is satisfiable in a reflexive model $M$, therefore by Proposition \ref{prop: proprietÃ  dei frame Bochvar} T$_e$ is valid in $M$. Let $\Gamma=\{ \text{T}_e,\varphi \}$, $\Sigma$ its closure under subformulas, and consider the filtration $M^\Sigma$ of $M$ through $\Sigma$. By Lemma \ref{lemma: filtrations}, T$_e$ is valid in $M^\Sigma$, hence by Proposition \ref{prop: proprietÃ  dei frame Bochvar} $M^\Sigma$ is reflexive. Now we repeat Theorem \ref{th.: filtrazioni soddisfacilitÃ } using the filtration $M^\Sigma$ to prove that if $\varphi$ is satisfiable in reflexive model, it is satisfiable in a finite reflexive model of cardinality at most $2^{card(\Sigma)}$. We have the desired finite model property, which, together with the completeness theorem stated in Corollary \ref{cor: completezza estensioni Bochvar}, gives the decisability of $\B^{\square}$T$_e$.
\end{proof}

\section{Conclusions and future work}

In this work we have studied two modal expansions of the external weak Kleene logics $\B$ and $\PWKe$. The modalization yields two different operators $\Box$, which, without further inquiry, can be intended as generic necessary (in the alethic sense) operators. The distinction between the two is motivated by the difference in designated values within the propositional bases. Moreover, the logics $\B$ and $\PWKe$ have been first provided with new axiomatizations, which differ from their ones (by Finn-Grigolia and Segerberg, respectively) since they are obtained thanks to the recent results that state the algebraizability of $\B$ (\cite{Ignoranzasevera}) and $\PWKe$ (see Section \ref{sec: Appendice}).

The introduction of modalities to $\B$ and $\PWKe$ adds further expressive power to a language already capable of expressing a (sort of) truth predicate ($\Jdue$), allowing to speak about the ``truth'' of a formula. The interplay between $\Jdue$ and $\Box$ makes these logics able to express some interactions between the notion of necessity and truth. Let us observe that, at first glance, external connectives could look very much  like the ``statability'' operators introduced by Correia in \cite{Correia} for the (internal) modal version of PWK. However, the substantial difference is that statability operators are actually modal operators, while external connectives work at the propositional level. Nevertheless, the language of our logics is rich enough to allow the defition of (at least) one of them, namely Correia's statability operator $\mathcal{S}$ can be defined as $\mathcal{S}(\varphi)\coloneqq \Box + \varphi$.

The logics $\B^{\square}$ and $\MPWK$ have been presented syntactically via Hilbert-style systems and provided with a possible worlds semantics in terms of three-valued Kripke models. Completeness and decidability of the axiomatic calculi have been established, the former via Henkin-style proofs, the latter by the filtration technique.

As it is customary in modal logic, some standard axiomatic extensions of the logics have been presented. In the case of $\MPWK$ this is done proving that the well-known formulas T,4, and 5 correspond to their usual properties on the accessibility relation. On $\B$ we introduced the counterparts of those formulas in the external language and prove their correspondence with frame properties. The idea of considering these extensions goes in the direction of exploring \emph{epistemic} logics that could be useful to formalize notions such as knowledge and ignorance in a non-classical context (see e.g. \cite{KnowledgeinBelnap-Dunn}, \cite{Ignoranzasevera}). A more general question about if and how standard modal formulas defining frame properties can be transferred (see \cite{bilkova_frittella23}) to $\B^{\square}$ and $\MPWK$ obtaining classes of frames characterized by the same properties is an interesting topic yet to be explored.

In the paper the algebraizability of $\PWKe$ has been proved as well. This seemingly side result is the starting point for a further work on the algebraic semantics for $\B^{\square}$ and $\MPWK$. Together with the algebraizability of $\B$, the result of the current paper completes the strong correspondence between external weak Kleene logics and their algebraic counterpart, the class of Bochvar algebras. 

The next step is to explore the equivalent algebraic semantics of the global\footnote{We move from the local logics introduced in this paper to the global ones because it is a well-known result that the local normal modal logics based on $\mathrm{S5}$ and weaker systems are not algebraizable, while their global versions are not only algebraizable but even implicative (see e.g. \cite{FontBook}, Example 3.61).} versions of $\B^{\square}$ and $\MPWK$, which will result in a class of modal Bochvar algebras. These global logics together with the structure and properties of modal Bochvar algebras will be the focus of a future paper.

Finally, let us emphasize that we deliberately chose to work with weak Kleene logics as they are ``traditionally'' intended, namely the consequence relations defined via either truth or non-falsity preservation. A different choice to be explored in the future consists in modalizing logics of ``mixed'' nature, such as those where a non-false consequence can follow from true (only) premises (see e.g. \cite{ChemlaEgre17}), a possibility that is briefly discussed at the end of \cite{Correia}.

\section{Appendix: Algebraizability of $\PWKe$}\label{sec: Appendice}

Recall from Definition \ref{def.: logiche esterne} that $\mathsf{PWK}_e$ is the logic induced by the matrix $\langle \textbf{WK}^e, \lbrace 1, \ant \rbrace \rangle$. A Bochvar algebra is an algebra $\A = \langle A,\lor,\land,\neg,\Jdue,0,1\rangle$ of type $\langle 2,2,1,1,0,0\rangle$ satisfying the following identities and quasi-identities.\footnote{Bochvar algebras were originally introduced by Finn and Grigolia \cite{FinnGrigolia} in an extended language including $\Juno $ and $\Jzero$. These two operations are term-definable in the language $\langle A,\lor,\land,\neg,\Jdue,0,1\rangle$, as already explained in Section \ref{sec: external logics}. Besides, we are using here the much shorter equivalent axiomatization introduced in \cite[Theorem 7]{SMikBochvar}.}
\begin{enumerate}
\item $\varphi\vee \varphi\thickapprox \varphi$; \label{BCA:1}
\item$\varphi\lor \psi \thickapprox \psi \lor \varphi$; \label{BCA:2}
\item $(\varphi\lor \psi)\lor \delta\thickapprox  \varphi\lor(\psi\lor \delta)$; \label{BCA:3}
\item $\varphi\land(\psi\lor \delta)\thickapprox(\varphi\land \psi)\lor(\varphi\land \delta)$; \label{BCA:4}
\item $\neg(\neg \varphi)\thickapprox \varphi$; \label{BCA:5}
\item $\neg 1\thickapprox 0$; \label{BCA:6}
\item $\neg( \varphi\lor \psi)\thickapprox \neg \varphi\land\neg \psi$; \label{BCA:7}
\item $0\vee \varphi\thickapprox \varphi$; \label{BCA:8}
\item $\Jzero\Jdue \varphi\thickapprox \neg\Jdue \varphi$; \label{BCA:10}
\item $\Jdue\varphi \thickapprox\neg(\Jzero\varphi\vee \Juno\varphi)$; \label{BCA:13}
\item $\Jdue\varphi\vee\neg \Jdue\varphi \thickapprox 1$; \label{BCA:14}
\item $J_{_2}(\varphi\lor \psi)\thickapprox ( J_{_2}\varphi\land J_{_2}\psi)\vee( J_{_2}\varphi\land J_{_2}\neg \psi)\vee ( J_{_2}\neg \varphi\land J_{_2}\psi) $; \label{BCA:18}
\item $\Jzero \varphi \thickapprox \Jzero \psi \;\&\; \Jdue \varphi \thickapprox \Jdue \psi \;\Rightarrow\; \varphi \thickapprox \psi$.\label{BCA:quasi}
\end{enumerate}

\vspace{5pt}
Bochvar algebras form a proper quasi-variety, which we denote by $\class{BCA}$, and that is generated by $\textbf{WK}^e$. It plays the role of equivalent algebraic semantics of both Bochvar external logic \cite[Theorem 35]{Ignoranzasevera} and PWK external logic, as we show in Theorem \ref{th: algebrizzabilitÃ  PWKe}. Recall that a logic $\vdash_{\mathsf{L}}$ is algebraizable (\cite[Definition 3.11, Proposition 3.12]{FontBook}) with respect to a certain class of algebras $\class{K}$ if there exist two maps $\tau\colon Fm\to \mathcal{P}(Eq)$, $\rho\colon Eq\to \mathcal{P}(Fm)$ from formulas to sets of equations and from equations to sets of formulas such that:
\[
\text{(ALG1)}\;\;\;\;\;\; \Gamma\vdash\varphi\iff\tau[\Gamma]\vDash_{\mathsf{K}}\tau(\varphi),\] 
and
\[
\text{(ALG4)}\;\;\;\;\;\; \varphi\thickapprox\psi\Dashv\vDash_{\mathsf{K}}\tau\circ \rho(\varphi\thickapprox\psi).\]

As a notational convention, in the following proof we will indicate by $ Hom(\mathbf{Fm}, \WKt) $ the set of homomorphisms from the formula algebra (in the language of $\class{BCA}$) in $\WKt$.

\begin{theorem}\label{th: algebrizzabilitÃ  PWKe}
	$\PWKe$ is algebraizable w.r.t. $\class{BCA}$ with transformers $\tau(\varphi) := \lbrace\neg\Jzero\varphi\approx 1\rbrace$ and $\rho(\epsilon\approx\delta) := \lbrace\epsilon\equiv\delta\rbrace$.
\end{theorem}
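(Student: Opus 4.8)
The plan is to verify directly the two conditions (ALG1) and (ALG4) for the proposed transformers, reducing both to computations in the generating algebra $\WKt$. The pivotal observation is that the designated set of the defining matrix is exactly the truth set cut out by $\tau$: evaluating in $\WKt$ one checks $\neg\Jzero 1 = \neg\Jzero\ant = 1$ while $\neg\Jzero 0 = 0$, so that $\{1,\ant\} = \{a\in\WKt : \neg\Jzero a = 1\}$. The second ingredient is that, since $\class{BCA}$ is the quasi-variety \emph{generated} by the finite algebra $\WKt$, the equational consequence relative to $\class{BCA}$ coincides with the one relative to the single algebra $\WKt$, that is $\Theta\vDash_{\class{BCA}}\epsilon\approx\delta$ iff $\Theta\vDash_{\WKt}\epsilon\approx\delta$; this suffices for finite $\Theta$, which is all we need since $\PWKe$ is finitary.

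For (ALG1) I would simply chase the definitions. By definition of the matrix consequence, $\Gamma\vdash_{\PWKe}\varphi$ holds iff for every $h\in Hom(\mathbf{Fm},\WKt)$ with $h[\Gamma]\subseteq\{1,\ant\}$ one has $h(\varphi)\in\{1,\ant\}$. Using the identity $\{1,\ant\}=\{a : \neg\Jzero a = 1\}$, this is equivalent to: for every such $h$, if $h(\neg\Jzero\gamma)=1$ for all $\gamma\in\Gamma$ then $h(\neg\Jzero\varphi)=1$, i.e. $\{\neg\Jzero\gamma\approx 1 : \gamma\in\Gamma\}\vDash_{\WKt}\neg\Jzero\varphi\approx 1$. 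By the coincidence $\vDash_{\WKt}=\vDash_{\class{BCA}}$ this is precisely $\tau[\Gamma]\vDash_{\class{BCA}}\tau(\varphi)$, which is (ALG1).

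For (ALG4) I must show $\epsilon\approx\delta\Dashv\vDash_{\class{BCA}}\tau\circ\rho(\epsilon\approx\delta)$, where $\tau\circ\rho(\epsilon\approx\delta)=\{\neg\Jzero(\epsilon\equiv\delta)\approx 1\}$, and again it suffices to argue in $\WKt$. Reading off the tables for $\equiv$ and $\Jzero$, the term $\epsilon\equiv\delta$ always takes a Boolean value and equals $1$ exactly when $\epsilon=\delta$; hence $\neg\Jzero(\epsilon\equiv\delta)=1$ iff $\epsilon=\delta$. The left-to-right direction then amounts to the identity $\neg\Jzero(x\equiv x)\approx 1$ holding in $\WKt$ (hence in $\class{BCA}$, identities being preserved), and the right-to-left direction to the quasi-identity $\neg\Jzero(x\equiv y)\approx 1 \Rightarrow x\approx y$ being valid in $\WKt$ (hence in $\class{BCA}$, since the latter is the quasi-variety generated by $\WKt$). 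Together these yield the required mutual equational consequence, completing (ALG4). The only genuinely delicate point is the transfer $\vDash_{\class{BCA}}=\vDash_{\WKt}$: it rests on $\class{BCA}=\mathbb{Q}(\WKt)$ together with the invariance of equational consequence under $\mathbb{I}$, $\mathbb{S}$, $\mathbb{P}$ and, because $\WKt$ is finite, under $\mathbb{P}_{U}$ as well; everything else is a finite truth-table verification.
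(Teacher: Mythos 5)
Your proposal is correct and follows essentially the same route as the paper's proof: both reduce $\vDash_{\class{BCA}}$ to $\vDash_{\WKt}$ via the generation of the quasi-variety, verify (ALG1) by the definitional chase using the identification $\{1,\ant\}=\{a:\neg\Jzero a=1\}$, and settle (ALG4) by computing in $\WKt$ that $\neg\Jzero(\epsilon\equiv\delta)=1$ exactly when $\epsilon=\delta$ (the paper phrases the right-to-left direction through the externality of $\equiv$-formulas and the quasi-equation (13), which is just your table check spelled out). If anything, you are slightly more careful than the paper in justifying the transfer $\vDash_{\class{BCA}}=\vDash_{\WKt}$ via finitarity of $\PWKe$ and closure of quasi-identity validity under $\mathbb{I}$, $\mathbb{S}$, $\mathbb{P}$, $\mathbb{P}_U$, a point the paper leaves implicit.
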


\begin{proof}

 In our case (ALG1) and (ALG4) translate into:
	
	\begin{itemize}
		\item[(ALG1)] $\Gamma \vdash_{\mathsf{PWK}_e} \varphi \Leftrightarrow \tau [\Gamma] \vDash_{_\class{BCA}} \tau (\varphi)$,
		
		\item[(ALG4)] $\varepsilon \approx \delta \Dashv \vDash_{_\class{BCA}} \tau (\rho (\varepsilon \approx \delta))$.
	\end{itemize}

Moreover, since $\class{BCA}$ is the quasi-variety generated by $\WKt$, the above claims amount to the following:
	
	\begin{itemize}
		\item[(ALG1)] $\Gamma \vdash_{\mathsf{PWK}_e} \varphi \Leftrightarrow \tau [\Gamma] \vDash_{\WKt} \tau (\varphi)$,
		
		\item[(ALG4)] $\varepsilon \approx \delta \Dashv \vDash_{\WKt} \tau (\rho (\varepsilon \approx \delta))$.
	\end{itemize}
	
	(ALG1) ($\Rightarrow$) Suppose $\Gamma \vdash_{\mathsf{PWK}_e} \varphi$. Take $h\in Hom(\mathbf{Fm}, \WKt)$ s.t. $h(\neg\Jzero\gamma)=h(1)=1$ for every $\gamma\in\Gamma$, which implies $\neg\Jzero h(\gamma)=1$, i.e. $h(\gamma)\neq 0$. Since $\Gamma \vdash_{\mathsf{PWK}_e} \varphi$, by Definition \ref{def.: logiche esterne} it holds $h(\varphi)\neq 0$, hence $h(\neg\Jzero\varphi)=1=h(1)$. Thus, we have shown that $\tau [\Gamma] \vDash_{\WKt} \tau (\varphi)$.
 
 \noindent
 ($\Leftarrow$) Suppose $\tau [\Gamma] \vDash_{\WKt} \tau (\varphi)$ and let $h\in Hom(\mathbf{Fm}, \WKt)$ be s.t. $h(\gamma)\neq 0$ for every $\gamma\in \Gamma$. Therefore, by the hypothesis, $h(\neg\Jzero\varphi)=h(1)=1$, which implies $h(\varphi)\neq 0$, giving the desired conclusion.
 
	
	
	\vspace{0.2cm}
	
	(ALG4) ($\Rightarrow$) Consider an arbitrary identity $\varepsilon\approx\delta$ in the language of $\class{BCA}$. A simple calculation shows that $\tau (\rho (\varepsilon \approx \delta))$ is $\neg\Jzero(\varepsilon\equiv\delta)\approx 1$. Let $h\in Hom(\mathbf{Fm},\WKt)$ be s.t. $h(\varepsilon) = h(\delta)$, therefore $h(\varepsilon\equiv\delta)=1$. This implies $h(\neg\Jzero(\varepsilon\equiv\delta))=1=h(1)$, so we conclude $\varepsilon\approx\delta\models_{\WKt}\tau (\rho (\varepsilon \approx \delta))$.

	\noindent
	$(\Leftarrow)$ Suppose $h(\neg\Jzero(\varepsilon\equiv\delta))=\neg\Jzero(h(\varepsilon)\equiv h(\delta))=1 $, for $h \in Hom(\mathbf{Fm},\WKt)$. Now $\Jzero (h(\varepsilon) \equiv h(\delta)) = 0$ implies $(\Jdue h(\varepsilon)\leftrightarrow \Jdue h(\delta))\land(\Jzero h(\varepsilon)\leftrightarrow\Jzero h(\delta))\in\lbrace 1,\ant\rbrace$, but since $(\Jdue h(\varepsilon)\leftrightarrow \Jdue h(\delta))\land(\Jzero h(\varepsilon)\leftrightarrow\Jzero h(\delta))$ is an external formula it must be that $(\Jdue h(\varepsilon)\leftrightarrow \Jdue h(\delta))\land(\Jzero h(\varepsilon)\leftrightarrow\Jzero h(\delta)) = 1$. The fact that $\Jdue h(\varepsilon)\leftrightarrow \Jdue h(\delta)=1$ and $\Jzero h(\varepsilon)\leftrightarrow\Jzero h(\delta)=1$ implies $\Jdue h(\varepsilon)=\Jdue h(\delta)$ and $\Jzero h(\varepsilon)=\Jzero h(\delta)$. Applying the quasi-equation (13), we conclude $h(\varepsilon) = h(\delta)$.
 
\end{proof}

\textbf{Acknowledgments.}
We thank two anonymous referees for their detailed comments and observations, which allowed to improve the quality of the paper.\\
\noindent

\justifying
S. Bonzio acknowledges the support by the Italian Ministry of Education, University and Research through the PRIN 2022 project DeKLA (``Developing Kleene Logics and their Applications'', project code: 2022SM4XC8) and the PRIN Pnrr project ``Quantum Models for Logic, Computation and Natural Processes (Qm4Np)'' (cod. P2022A52CR). He also acknowledges the Fondazione di Sardegna for the support received by the projects GOACT (grant number
F75F21001210007) and MAPS (grant number F73C23001550007), the University of Cagliari for the support by the StartUp project ``GraphNet'', and also the support by the MOSAIC project (H2020-MSCA-RISE-2020 Project 101007627). Finally, he gratefully acknowledges also the support of the INDAM GNSAGA (Gruppo Nazionale per le Strutture Algebriche, Geometriche e loro Applicazioni). 

N. Zamperlin acknowledges the support of the MSCA-RISE Programme – Plexus: Philosophical, Logical, and Experimental Routes to Substructurality.


\end{document}